\newtheorem{theorem}{Theorem}[section]
\newtheorem{lemma}[theorem]{Lemma}
\newtheorem{proposition}[theorem]{Proposition}
\newtheorem{ipotesi}[theorem]{Assumption}
\theoremstyle{definition}
\newtheorem{definition}[theorem]{Definition}
\newtheorem{remark}[theorem]{Remark}
\numberwithin{equation}{section}
\newcommand\supp{{\rm spt}}
\newcommand{\sing}{{\rm Sing}}
\newcommand\res{\mathop{\hbox{\vrule height 7pt width .3pt depth 0pt
\vrule height .3pt width 5pt depth 0pt}}\nolimits}
\newcommand{\bG}{\mathbf{G}}
\newcommand{\cH}{{\mathcal{H}}}
\newcommand{\p}{{\mathbf{p}}}
\newcommand{\cC}{{\mathcal{C}}}
\newcommand{\bI}{{\bm{I}}}
\newcommand{\bE}{{\mathbf{E}}}
\newcommand{\bB}{{\mathbf{B}}}
\newcommand{\bC}{{\mathbf{C}}}
\newcommand{\be}{{\mathbf{e}}}
\newcommand{\bd}{{\mathbf{d}}}
\newcommand\N{{\mathbb N}}
\newcommand\s{{\mathbb S}}
\newcommand\C{{\mathbb C}}
\newcommand\R{{\mathbb R}}
\newcommand{\eps}{{\varepsilon}}
\newcommand{\bA}{\mathbf{A}}
\newcommand{\bmax}{\mathbf{m}}
\def\Xint#1{\mathchoice
{\XXint\displaystyle\textstyle{#1}}%
{\XXint\textstyle\scriptstyle{#1}}%
{\XXint\scriptstyle\scriptscriptstyle{#1}}%
{\XXint\scriptscriptstyle\scriptscriptstyle{#1}}%
\!\int}
\def\XXint#1#2#3{{\setbox0=\hbox{$#1{#2#3}{\int}$ }
\vcenter{\hbox{$#2#3$ }}\kern-.6\wd0}}
\def\mint{\Xint-}
\newcommand{\Lip}{{\rm {Lip}}}
\newcommand{\dist}{{\rm {dist}}}
\newcommand{\dv}{{\text {div}}}
\newcommand\weak{{\rightharpoonup}\,}
\newcommand\Id{{\rm Id}\,}
\newcommand{\cA}{{\mathcal{A}}}
\newcommand{\cG}{{\mathcal{G}}}
\newcommand{\cE}{{\mathcal{E}}}
\newcommand{\cF}{{\mathcal{F}}}
\renewcommand{\cC}{{\mathcal{C}}}
\newcommand{\cQ}{{\mathcal{Q}}}
\newcommand\Pe{{\mathscr P}}
\newcommand{\mass}{{\mathbf{M}}}
\renewcommand\d{\mathbf{d}}
\newcommand\e{\mathbf{e}}
\newcommand{\Om}{\Omega}
\def\I#1{{\mathcal{A}}_{#1}}
\newcommand{\Iqs}{{\mathcal{A}}_Q(\R^{n})}
\newcommand{\Iq}{{\mathcal{A}}_Q}
\def\a#1{\left\llbracket{#1}\right\rrbracket}
\newcommand{\norm}[2]{\left\|#1\right\|_{#2}}
\newcommand{\ra}{\right\rangle}
\newcommand{\la}{\left\langle}
\newcommand{\D}{\textup{Dir}}
\newcommand{\de}{\partial}
\newcommand{\xii}{{\bm{\xi}}}
\newcommand{\ro}{{\bm{\rho}}}
\newcommand{\g}{{g'}}
\newcommand{\f}{{f'}}
\newcommand{\etaa}{{\bm{\eta}}}
\newcommand{\ph}{\varphi}
\newcommand{\lin}{{\text{lin}}}
\newcommand{\bq}{{\mathbf{q}}}
\newcommand\B{{\mathbf{B}}}
\newcommand{\bh}{\mathbf{h}}
\title[Gradient $L^p$ estimates]{Regularity of area minimizing currents I:\\ gradient $L^p$ estimates}
\author{Camillo De Lellis}
\author{Emanuele Spadaro}
\begin{document}

\begin{abstract}
In a series of papers, including the present one, we give a new, shorter proof of
Almgren's partial regularity theorem for area mi\-ni\-mi\-zing currents
in a Riemannian manifold, with a slight improvement on the regularity assumption for the latter.
This note establishes a new a priori estimate
on the excess measure of an area minimizing current, together with several statements
concerning approximations with Lipschitz multiple valued graphs.
Our new a priori estimate is an higher integrability type result, which has a counterpart
in the theory of Dir-minimizing multiple valued functions and plays a key role in estimating
the accuracy of the Lipschitz approximations. 
\end{abstract}

\maketitle

\section{Foreword: a new proof of Almgren's partial regularity}
In the present work we continue the investigations started in \cite{DS1, DS2}, which
together with the forthcoming papers \cite{DS4, DS5} lead to a proof of the following
theorem.

\begin{theorem}\label{t:Almgren_migliorato}
Let $\Sigma\subset \R^{m+n}$ be a $C^{3, \varepsilon_0}$ submanifold
for some $\varepsilon_0 >0$
and $T$ an $m$-dimensional area minimizing integral
current in $\Sigma$. Then, there is a closed set $\sing (T)$ of 
Hausdorff dimension at most $m-2$ such that $T$ is a $C^{3,\varepsilon_0}$
embedded submanifold in $\Sigma \setminus
(\supp (\partial T) \cup \sing (T))$.
\end{theorem}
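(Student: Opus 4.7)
The plan is to follow Almgren's program, broken into four macro-steps, of which DS1, DS2, the present paper, and DS4, DS5 each supply one ingredient. First, by Allard's $\varepsilon$-regularity theorem, every interior point $p \in \supp(T)\setminus \supp(\partial T)$ at which some tangent cone is a multiplicity-one plane is a regular point; hence the singular set consists of points whose tangent cones all have multiplicity $Q \geq 2$. Combining this with the Almgren stratification for stationary integral varifolds already yields $\dim_\cH \sing(T) \leq m-1$; the extra dimension has to be extracted from branch-point analysis. The whole difficulty is thus concentrated at branch points with \emph{flat} tangent cones $Q\a{\pi}$.

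Around such a point, one first produces a Lipschitz $Q$-valued approximation $u\colon B_r(p,\pi)\to \cA_Q(\pi^\perp)$ of $T$ whose graph catches all but a very small amount of mass (this is the strong approximation theorem improved in the present paper). The key new input proved here, a higher integrability estimate for $|Du|^2$ with respect to the excess, refines the classical $L^2$ control into an $L^p$ control with $p>2$ and lets one estimate the \emph{error} between $T$ and $\mathrm{graph}(u)$ in a \emph{superlinear} way in the excess. This refinement is what allows one to treat the $Q$-valued graph as an almost Dir-minimizer to leading order.

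Second, in the subsequent paper DS4, one uses this approximation at all dyadic scales to perform a Whitney-type stop-time construction and glue local Dir-minimizers to produce a $C^{3,\alpha}$ \emph{center manifold} $\cM$ whose ``$Q$-valued normal graph'' $N\colon \cM\to \cA_Q(T\cM^\perp)$ with $\etaa\circ N\equiv 0$ approximates $T$. The selection of the stopping conditions is governed precisely by the quantitative excess decay afforded by the higher integrability estimate of this paper; without it the center-manifold errors would only be linear and the scheme would not close.

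Third, in DS5 one imports Almgren's frequency function
\[
I(r) \;=\; \frac{r\displaystyle\int_{B_r} |DN|^2}{\displaystyle\int_{\partial B_r} |N|^2}
\]
and shows that it is \emph{almost monotone} along $\cM$, the error terms being controlled by the approximation estimates. A blow-up procedure based on the almost monotonicity extracts, at each branch point, a nontrivial homogeneous $\mathrm{Dir}$-minimizing $Q$-valued tangent map; the analysis of the singular set of such maps carried out in DS1 shows that its singular set has dimension at most $m-2$, and a Federer-type dimension reduction then transfers this bound from the blow-up back to $\sing(T)$. The hardest single step is the center-manifold construction together with the closure of the approximate frequency argument: both are essentially nonperturbative and hinge on the \emph{superlinear} gain provided by the $L^p$ estimate proved in the present paper, which is why this estimate is the first brick to be laid.
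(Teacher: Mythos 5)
Your overall plan coincides with the one this paper itself sketches in the Foreword: reduce to flat branch points of multiplicity $Q\geq 2$, approximate $T$ by Lipschitz $Q$-valued graphs with the superlinear gain coming from the higher-integrability estimate proved here, build the center manifold of \cite{DS4}, and run a frequency-function blow-up in \cite{DS5} against the linear theory of \cite{DS1}. Keep in mind that the present paper only supplies the approximation ingredients, so at this level both your text and the paper's are roadmaps; still, two of your steps would fail as literally described.

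First, the dimension bound is \emph{not} transferred from the blow-up back to $\sing(T)$ by a Federer-type dimension reduction. The paper's own example $\{(z,w):(z-w^2)^2=w^5\}$ shows that the singularity of the current can disappear in the limit of the $Q$-valued approximations, and this loss is exactly why a naive blow-up-plus-reduction cannot close the argument. The actual scheme is a contradiction argument: assuming $\cH^{m-2+\alpha}(\sing_Q(T))>0$, one must show (i) that the normalized normal approximations on the center manifold converge to a \emph{nontrivial} Dir-minimizer with vanishing average --- this is where the almost monotonicity of the (modified) frequency and a delicate unique-continuation-type argument enter, and it is the real purpose of the center manifold: to subtract the smooth ``average sheet'' so that the limit does not collapse onto it, not merely to make error terms superlinear --- and (ii) that a singular set of positive $\cH^{m-2+\alpha}$ measure \emph{persists} in the limit, which requires the persistence of $Q$-points (Theorem \ref{t:persistence} of this paper) together with a capacity-type argument in \cite{DS5}; only then does the linear regularity theory of \cite{DS1} yield the contradiction. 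Second, and more minor: the preliminary bound $\dim_\cH \sing(T)\leq m-1$ does not follow from Allard plus stratification, since branch points have flat tangent cones whose spine is $m$-dimensional and hence are not captured by the strata of dimension $\leq m-1$; the paper instead invokes Almgren's generalization of Federer's reduction argument (points of non-integer density form a set of dimension at most $m-3$) and then argues by induction on the density $Q$, blowing up at a point of density $Q$ surrounded by a large set of density-$Q$ points.
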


Theorem \ref{t:Almgren_migliorato} was first proved by Almgren in his monumental
work \cite{Alm}, assuming slightly better regularity on $\Sigma$, namely
$\Sigma \in C^5$.
The improvement itself is therefore not so significant, but
our proof, besides being much shorter, introduces new ideas and
establishes several new results, which we hope will provide useful tools 
for further investigations in the area.
Indeed, although we still follow Almgren's program
and use many of his groundbreaking discoveries,
the main steps are achieved in a more
efficient way thanks to new estimates and techniques.
A striking example is the construction of the so-called
center manifold, which is by far
the most intricate part of Almgren's work and the least explored,
in spite of its importance: in this respect,
our construction in \cite{DS4} is considerably simpler and shorter than
\cite[Chapter~4]{Alm}, and establishes better results.

Some of our improvements are more transparent, although not
substantially simpler, when $\Sigma = \R^{m+n}$ and
in a book in preparation \cite{DS-book} we will provide a complete and
self-contained account of Theorem~\ref{t:Almgren_migliorato} under such assumption. 
Moreover, building on our understanding of the
various issues involved to the analysis of higher codimension singularities, 
we plan to tackle Chang's improvement \cite{Chang},
which shows that $\sing (T)$ consists
of isolated points when $m=2$.
His arguments rely on a center manifold construction
which does not match exactly the statements of \cite{Alm}
and it is not fully justified, but only briefly sketched in the appendix of \cite{Chang}. 
In \cite{DSS}, instead, we give a detailed, simple construction for such
center manifold and a complete proof of this refined regularity result.

An alternative route to Chang's result for $J$-holomorphic
currents in symplectic manifolds has been given recently in
\cite{RT1,RT2}.
The interest in the regularity theory for this class of area minimizing
2-dimensional currents
has been generated by the seminal paper of Taubes \cite{Taubes}
on the equivalence between Gromov and Seiberg-Witten invariants,
where it plays an important role.
Moreover, the papers \cite{RT1,RT2} have stimulated a lot of activity in the area,
cf., for example, \cite{BeRi,PuRi,Ri04bis,Ri04}.
In \cite{BeRi} Bellettini and Rivi\`ere proved that, when $T$ is a
special Lagrangian cone in $\R^6$, $\sing (T)$ consists of finitely many
half-lines meeting at the origin. This is, to our knowledge,
the only result of its type not covered by the Almgren-Chang works.
We believe that the Bellettini-Rivi\`ere regularity theorem can be extended to general
$3$-dimensional area minimizing cones in any space dimension, combining the techniques 
developed in \cite{DS1}--\cite{DSS}. 
Most of the proofs in
\cite{BeRi,PuRi, Ri04bis,  Ri04, RT1, RT2, Taubes}
take advantage of two specific assumptions,
the underlying almost complex structure and the $2$-dimensionality
of the objects of study. Nonetheless these works have had
a profound influence on our research.

\subsection{A blow-up proof: a very brief overview}
In the rest of this foreword we will give a rough outline of the proof of Theorem \ref{t:Almgren_migliorato},
highlighting the contents of this note and the way it merges with
its companion papers \cite{DS4, DS5}, while comparing them to \cite{Alm}.
Our discussion will be based on a well-known class of examples
for which the statement of Theorem~\ref{t:Almgren_migliorato} is optimal, namely 
singular holomorphic curve of $\C^2$.
As it was first observed by Federer (cf.~\cite[5.4.19]{Fed}),
the integral currents induced 
by holomorphic subvarieties of $\C^n$ (with their natural orientation)
are area minimizing.

We denote by $D_Q (T)$ the set of points in $\supp (T)\setminus \supp (\partial T)$
where the density of  a current $T$ equals the natural number $Q\geq 1$.
One first pioneering contribution by Almgren is an elementary, but very clever,
generalization of Federer's reduction argument, which has been widely used
in several contexts (see \cite[Theorem 35.3]{Sim} and \cite{White97}).
This argument implies that, if $T$ is area minimizing, then $\supp (T) \setminus (\cup_{Q} D_Q (T) \cup \supp (\partial T))$ has Hausdorff
dimension at most $m-3$.
Thus, to prove Theorem \ref{t:Almgren_migliorato} it suffices to show that the Hausdorff dimension of $\sing_Q (T) := \sing (T)\cap D_Q (T)$ is at most $m-2$. 
Since the ``classical'' regularity theory ensures that $T$ is a $C^{1,\alpha}$ submanifold in the neighborhood of any point $x\in D_1 (T)$, it is natural
to argue by induction on $Q$. 

Let us therefore consider the case $Q=2$ and a point $x\in D_2 (T)$. By the monotonicity formula, in some neighborhood $U$ of $x$,
$\|T\|$-almost all points have density $1$ or $2$.
If the points of density $1$ are a set of $\|T\|$-measure zero,
by the classical regularity theory
$x$ is a regular point for $T$. So any $x\in \sing_2 (T)$ must
be surrounded by many points of density $1$,
as it is, for instance, for the complex curve $\{ z^2 = w^3\} \subset \C^2$ at $x=0$.
On the other hand, in such an example $0$ is an isolated singularity,
whereas, if $T$ were to contradict Theorem~\ref{t:Almgren_migliorato},
by standard measure theoretic arguments
there would be a point $x\in \sing_2 (T)$ surrounded by many points of density $2$. From now on
we argue by contradiction and assume that this happens for some area minimizing $T$ at the point $0\in D_2 (T)$. 
Moreover, by known facts in geometric measure theory, we can reduce
the contradiction to the case that, 
for a suitable sequence of radii $r_k\downarrow 0$,
the homothetic rescalings of the current $T$ by a factor $1/r_k$ (from now on denoted by $T_k$) converge to a double copy of an 
$m$-dimensional plane, while at the same time $D_2 (T_k)$ remains rather large.

It was first recognized by De Giorgi that  the convergence of $T_k$ to a
{\em single copy} of a flat plane  implies that
$\supp (T_k)$ can be well approximated by the graph of Lipschitz functions
which are ``almost harmonic''.
However, the example $\{z^2 = w^3\}\subset \C^2$ shows that this is not
always the case if the limiting plane has higher multiplicity.
Motivated by this fact, Almgren undertook in \cite{Alm} the 
strikingly ambitious program of giving a rather complete existence
and regularity theory for 
{\em multiple valued} functions minimizing a suitable generalization
of the Dirichlet energy, called $\D$-minimizers.
The crowning achievement of this theory is that,
except for a closed set of codimension at most $2$,
$\D$-minimizers can be locally decomposed in classical (i.e.~single-valued) non-intersecting
harmonic sheets (possibly counted with multiplicity).
Such ``linear theory'' is developed in \cite[Chapter 2]{Alm} and revisited in our paper \cite{DS1}.
Moreover, it is complemented by several technical statements linking
the multiple valued graphs to the integral currents, a task
which is accomplished in \cite[Chapter 1]{Alm} by Almgren and in \cite{DS2} by us
(we refer to the introduction to our previous two papers \cite{DS1,DS2} for more
details). 

The guiding idea in the contradiction argument is to approximate
the currents $T_k$ with Lipschitz {\em $2$-valued} functions and, after a suitable
renormalization of their Dirichlet energy, show that they converge to
a $\D$-minimizer. If the limit inherits a large singular set from the currents
$T_k$, then it contradicts the linear regularity theory.
Obviously, this strategy requires suitable approximations of area minimizing
currents with multiple valued graphs, accomplished by Almgren in
\cite[Chapter~3]{Alm} and by us in the present paper.
If one follows our approach, the convergence of these approximations
to a $\D$-minimizer can be concluded in a rather direct way. However,
we cannot expect that such limit inherits the
singular set of the current.
For example, given the complex curve $\{(z,w) : (z-w^2)^2 = w^5\}\subset \C^2$,
any reasonable approximations of
homothetic rescalings of this algebraic variety in a neighborhood
of the origin converge to a double copy of the classical holomorphic
graph $\{(w, w^2) : w\in \C\}$,
which has lost the singularity at the origin.

In order to perform the blow-up argument, we then need to ``modulate
lower order regularities out''.
This is accomplished by the construction of a
center manifold (see \cite[Chapter~4]{Alm} and \cite{DS4}):
such an object is a regular $C^{3,\alpha}$
submanifold which is very close to the average of the sheets of the
current at any scale where the latter is ``very collapsed''.
The final blow-up argument is then carried over to a new sequence of
$2$-valued approximations of $T_k$, performed on the normal
bundles of the center manifolds (see \cite[Chapter 5]{Alm} and \cite{DS5}).
By a delicate unique continuation principle, based on a new monotonicity
formula discovered by Almgren, 
a suitable normalization of the latter approximations does
converge to a $\D$-minimizer which would be forced to
have a large singular set, reaching the desired contradiction.
This final step builds upon very delicate computations, which thus
require a lot of accuracy in the construction
of the center manifold, that in turn needs very good estimates on
the approximation results of this note.
Thus, unlike the two works \cite{DS1, DS2}, which can be considered separately,
the present paper and \cite{DS4,DS5} are intimately interconnected.

\subsection{Our contribution; or, what is new}
In their overall structure,
our five papers match bijectively the five chapters of \cite{Alm}.
Moreover, it is clear that the ultimate
reason for the success of the program is the very same prodigious and
celebrated discovery of Almgren: the monotonicity of the frequency
function and its astonishing robustness, which enters twice in the
plan: at the very beginning, in the linear regularity theory,
and at the end, in the convergence of the final approximations (cf.~\cite{DS1, DS5}).

So, what is new in our proof? Aside from finer details,
which are explained in the introductions
to each of our papers, there are some new contributions which
come at a higher level. 
Our investigations started with the idea that
the machinery developed in metric analysis and metric geometry
in the last 30 years could reduce the complexity of several arguments in Almgren's
program.
This is, indeed, the case at many levels in the two papers \cite{DS1,DS2}
and in this note. Approaching vast parts of Almgren's theory with these tools,
we not only get shorter and more transparent proofs, but often
also achieve stronger analytic estimates, which give a better starting point
for the PDE parts of the program. 
Moreover, as it often happens when ``abstract nonsense'' simplifies
preexisting mathematical theories, such machinery provides also a better insight
to the material of \cite{Alm}, as it highlights the important points in the proofs therein. 

However, this alone would not explain the shortness of our papers
compared to \cite[Chapters~3,4,5]{Alm}.
The other important reason is that
we also derive some fundamental, new ``hard'' estimates.
A primary example is the present paper, where the main a priori estimate is a new 
higher integrability result, which comes from
a Gehring-type argument and is inspired by a simple remark in the linear theory
(the higher integrability of gradients of $\D$-minimizers)
which to our knowledge is not observed in Almgren's monograph. Similar instances
are present in the papers \cite{DS4,DS5}, where some new quantities
and guiding principles are introduced (for instance, the ``modified frequency''
function in \cite{DS5} and the ``splitting-before-tilting''
principle in \cite{DS4}, inspired by \cite{Ri04}),
which probably lead to the improvement on the regularity assumptions
of the ambient manifold $\Sigma$.
In all these cases we provide more efficient tools compared to \cite{Alm} and
invoke more PDE theory at several levels, drawing connections with 
fairly classical concepts from other areas of analysis (such as maximal functions, Lipschitz truncations, elliptic systems, Sobolev capacity).  
Unfortunately we do not understand Almgren's arguments at a sufficiently
deep level to draw a fine parallel between our papers \cite{DS4,DS5}
and the last two chapters of his book, where the intricacy of
the arguments in \cite{Alm} is almost prohibitive. 
It remains the fact that our papers are much more accessible,
and we hope that in the near future our work will be used to penetrate 
further in the richness and beauty of Almgren's monograph and to go beyond
Theorem~\ref{t:Almgren_migliorato}.

\medskip

{\bf Acknowledgments.} 
The first author is deeply indebted to Tristan Rivi\`ere who
``infected him'' with the beauty of the problem. Both authors are also warmly
thankful to Giovanni Alberti, Bill Allard, Luigi Ambrosio and Bernd Kirchheim
not only for several enlightening conversations,
but also for their constant enthusiastic encouragement. 

Several other colleagues and friends have contributed
with important scientific conversations at some specific stage,
for which they will be acknowledged specifically in the 
various papers.
In particular, for this first one we are grateful 
to Stefano Bianchini, Sergio Conti, Matteo Focardi, Jonas Hirsch and Luca Spolaor
for very useful discussions and comments.

This work was carried over several years and the authors
wish to thank many institutions where they spent very productive visits,
namely: the University of Rome La Sapienza,
the Scuola Normale Superiore and the University of Pisa,
the Max Planck Institute for Mathematics in the Sciences and the University of Leipzig, 
the University of Z\"urich,
the SISSA in Trieste, the University of Warwick and, most
of all, Princeton University, which hosted the first author
during his last sabbatical.
We also acknowledge the support of the ERC grant RAM, ERC 306247.

We finally thank the anonymous referee
for his/her careful reading and very valuable suggestions, which contributed to a substantial improvement of the initial manuscript.

\section{Introduction}

\subsection{A priori gradient $L^p$ estimate}
In order to state the main results, we start specifying some assumptions,
which will hold throughout the paper. For the notation concerning submanifolds
$\Sigma\subset \R^{m+n}$ we refer to \cite[Section 1]{DS2}. With $\bB_r (p)$ and $B_r (x)$ we denote, respectively, the open ball with radius $r$ and center $p$ in $\mathbb R^{m+n}$ and the open ball with radius $r$ and center $x$ in $\mathbb R^m$.  $\bC_r (x)$ will always denote the cylinder $B_r (x)\times \mathbb R^n$ and the point $x$ will be omitted when it is the origin. In fact, by a slight abuse of notation, we will often treat the center $x$ as a point in $\mathbb R^{m+n}$, avoiding the correct, but more cumbersome, $(x,0)$.
Let $e_i$ be the unit vectors in the standard basis,
$\pi_0$ the (oriented) plane $\R^m\times \{0\}$ and $\vec \pi_0$
the $m$-vector $e_1\wedge \ldots \wedge e_m$ orienting it.
We denote by $\p$ and $\p^\perp$ the
orthogonal projections onto, respectively, $\pi_0$ and its orthogonal complement $\pi_0^\perp$. In some cases we need orthogonal projections onto other planes $\pi$
and their orthogonal complements $\pi^\perp$,
for which we use the notation $\p_\pi$ and $\p^\perp_\pi$. 
For what concerns integral currents we use the definitions and the notation of \cite{Sim}.

\begin{ipotesi}\label{ipotesi_base}
$\Sigma\subset\R^{m+n}$ is a $C^2$
submanifold of dimension $m + \bar n = m + n - l$, which is the graph of an entire
function $\Psi: \R^{m+\bar n}\to \R^l$ and satisfies the bounds
\begin{equation}\label{e:Sigma}
\|D \Psi\|_0 \leq c_0 \quad \mbox{and}\quad \bA := \|A_\Sigma\|_0
%\| D^2\bPsi \|_{L^\infty}
\leq c_0,
\end{equation}
where $c_0$ is a positive (small) dimensional constant.
$T$ is an integral current of dimension $m$ with bounded support contained in $\Sigma$ and which, for some open cylinder $\bC_{4r} (x)$ 
(with $r\leq 1$)
and some positive integer $Q$, satisfies
\begin{equation}\label{e:(H)}
\p_\sharp T = Q\a{B_{4r} (x)}\quad\mbox{and}\quad
\de T \res \bC_{4r} (x) =0\, .
\end{equation}

If we say that $T$ is {\em area minimizing} we then mean that it is area-minimizing in $\Sigma\cap
\bC_{4r} (x)$, namely that $\mass (T) \leq \mass (T + \partial S)$ for any integral $S$ with $\supp (S)\subset
\Sigma \cap \bC_{4r} (x)$.
\end{ipotesi}

\begin{definition}[Excess measure]\label{d:excess}
For a current $T$ as in Assumption \ref{ipotesi_base} we define the \textit{cylindrical excess} $\bE(T,\bC_{4r} (x))$,
the \textit{excess measure} $\e_T$ and its {\em density} $\bd_T$:
\begin{gather*}
\bE(T,\bC_r (x)):= \frac{\|T\| (\bC_r (x))}{\omega_m r^m} - Q ,\\%\label{e:cyl_excess}\\
\e_T (A) := \|T\| (A\times\R^{n})  - Q\,|A| \qquad \text{for every Borel }\;A\subset B_r (x),\\%\label{e:ex_measure}\\
\bd_T(y) := \limsup_{s\to 0} \frac{\e_T (B_s (y))}{\omega_m\,s^m}= \limsup_{s\to 0} \bE (T,\bC_s (y)),%\label{e:ex_density}
\end{gather*}
where $\omega_m$ is the measure of the $m$-dimensional unit ball
(the subscripts $_T$ will be omitted if clear from the context).
\end{definition}

Since $T$ has finite mass, the function $\bd$ is naturally an $L^1$ function. However,
we can show the following higher integrability estimate when $T$ is, in addition, area minimizing.
We call it a gradient $L^p$ estimate because we will show that $\bd$ coincides with the gradient of
an appropriate Lipschitz function on a large region.

\begin{theorem}[Gradient $L^p$ estimate]\label{t:higher1}
There exist constants $p_1 >1$ and $C, \eps_{10}>0$ (depending on $m,n,\bar n, Q$)
with the following property. Let $T$ be as in Assumption \ref{ipotesi_base} in the cylinder $\bC_4$.
If $T$ is area minimizing and $E=\bE (T,\bC_4)< \eps_{10}$, then
\begin{equation}\label{e:higher1}
\int_{\{\bd\leq1\}\cap B_2} \bd^{p_1} \leq C\, E^{p_1-1} \left(E + \bA^{2}\right).
\end{equation}
\end{theorem}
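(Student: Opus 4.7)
The strategy mirrors the higher integrability of gradients of $\D$-minimizing multiple valued functions, transferred to the area minimizing current through Lipschitz approximation, and concluded by a Gehring-type lemma. Writing $E = \bE(T, \bC_4)$, the key step is to establish, for every ball $B_{2r}(y) \subset B_{7/2}$ on which the cylindrical excess is below some threshold $\eta < \eps_{10}$, a reverse Hölder inequality of the form
\begin{equation*}
\mint_{B_r(y)} \bd \,\leq\, C\,\bigg(\mint_{B_{2r}(y)} \bd^{\theta}\bigg)^{1/\theta} + C\,\bA^2,
\end{equation*}
with some exponent $\theta < 1$ depending on $m$ and $C = C(m,n,\bar n, Q)$. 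Coupling this with a Calderón--Zygmund stopping time argument on the Hardy--Littlewood maximal function of $\bd$, restricted to $\{\bd\leq 1\}\cap B_2$, yields local $L^{1+\delta}$ integrability of $\bd$ for some $\delta = p_1 - 1 > 0$. The form $E^{p_1-1}(E + \bA^2)$ then emerges by interpolating the resulting $L^{p_1}$ bound against the trivial mass estimate $\int_{B_2}\bd \leq C\,E$.

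To derive the reverse Hölder inequality, the plan is to combine the $Q$-valued Lipschitz approximation of $T$ developed later in this paper with a Caccioppoli-type step exploiting the area-minimizing property. On a ball $B_{2r}(y)$ of small cylindrical excess, the approximation produces a Lipschitz map $f : B_{2r}(y) \to \Iqs$ and a closed set $K$ such that $T$ coincides with the graph of $f$ outside $K\times\R^n$, with $|K|$ and $\e_T(K)$ controlled by a small power of the excess and with $\int|Df|^2 \simeq \e_T(B_{2r}(y)\setminus K)$ modulo terms of order $\bA^2 r^m$. For a suitable $\rho\in(r,2r)$, I would take the trace $f|_{\partial B_\rho(y)}$, extend it into $B_\rho(y)$ by solving the $Q$-valued Dirichlet problem from \cite{DS1}, transplant the resulting graph into $\Sigma$ using the parametrization $\Psi$, and compare against $T$. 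The minimality then yields an inequality of the type
\begin{equation*}
\int_{B_\rho(y)} |Df|^2 \,\leq\, \frac{C}{(2r-\rho)^2}\int_{B_{2r}(y)\setminus B_\rho(y)} \dist(f,\bar f)^2 + C\,\bA^2\,r^m + \text{(bad set contribution)},
\end{equation*}
where $\bar f$ is the average of the $Q$ sheets. The $Q$-valued Sobolev--Poincaré inequality from \cite{DS1} then replaces the $L^2$-distance by a suitable power of $\|Df\|_{L^{2m/(m+2)}}$, and the identification $|Df|^2\sim\bd$ on the graph region delivers the reverse Hölder with $\theta = m/(m+2)$.

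The main obstacle I anticipate is the Caccioppoli step, where several error sources must simultaneously be kept subdominant: the bad-set contribution $\e_T(K)$, the quadratic correction $\bA^2$ induced by the curvature of $\Sigma$ when building competitor currents (one must realign the transplanted graph with $\Sigma$ to produce an admissible competitor, paying a $\bA^2$ price per unit area), and the quantitative discrepancy between $\bd$ and $|Df|^2$ coming from the tilting of the tangent planes and from the component $\p^\perp T$. Each error must be shown to be of strictly lower order, so that it can be absorbed by the left-hand side of the reverse Hölder inequality. Finally, the truncation to $\{\bd\leq 1\}$ in the statement is exactly what allows the maximal function selection to produce balls of sufficiently small cylindrical excess to activate the Lipschitz approximation, thereby closing the loop with the global hypothesis $\bE(T,\bC_4) < \eps_{10}$.
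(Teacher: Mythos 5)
Your outline does identify the right ingredients (Lipschitz approximation, a minimality comparison, a Gehring-type mechanism, the role of the truncation $\{\bd\leq 1\}$), and your Caccioppoli-at-the-current-level route is genuinely different in flavor from the paper, which never proves a reverse H\"older inequality for $\bd$: it instead proves a \emph{weak excess estimate} (Proposition~\ref{p:o(E)}) — on a cylinder with excess below a threshold, any Borel set of small \emph{relative measure} carries at most $\eta_{10}E\,s^m+C\bA^2 s^{m+2}$ of excess — whose gain comes from the $L^{p_{10}}$, $p_{10}>2$, integrability of the gradient of the Dir-minimizer produced by the harmonic approximation (Theorems~\ref{t:o(E)} and~\ref{t:hig fct}), not from a Poincar\'e inequality applied to the Lipschitz approximation. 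The genuine gap in your plan is the passage from your ball-wise reverse H\"older inequality to \eqref{e:higher1}. Your inequality can only be asserted on balls $B_{2r}(y)$ whose cylindrical excess lies below a fixed small threshold $\eta$; since $\bE(T,\bC_t(y))\leq 4^mE/t^m$, around a given point this is guaranteed only for radii $t\gtrsim (E/\eta)^{1/m}$, and below that scale you have nothing. A Calder\'on–Zygmund/Gehring argument run under this constraint controls only maximal averages of $\bd$ at scales above $(E/\eta)^{1/m}$, not $\bd$ itself: at levels $\lambda\in(\eta,1]$ the stopping balls have average $\sim\lambda$, which exceeds the threshold, so the reverse H\"older is unavailable exactly where it is needed. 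A priori the contribution of $\{\eta\leq\bd\leq1\}\cap B_2$ to the left-hand side is only bounded by $|\{\bd\geq\eta\}\cap B_2|\leq C E/\eta$, i.e.\ of order $E$, which is far larger than $E^{p_1-1}(E+\bA^2)$ (there is no hypothesis relating $\bA$ to $E$, and $E^{p_1}\ll E$); the final interpolation with $\int\bd\leq CE$ cannot repair this. The paper bridges precisely this range with its stopping-radius/Besicovitch level-set iteration (Steps 1–2 of the proof): for $\bd(x)\geq\gamma cE$ one stops at the radius where the \emph{average} excess has dropped to $cE\leq\eps_{13}$, so the high level set has relative measure $\leq\gamma^{-1}$ in the stopping ball and the weak excess estimate applies, yielding the geometric decay $\gamma^{-\varrho}$ of $\int_{\{\bd\geq\gamma^{2k+1}E\}}\bd$ in $k$ up to level $1$, whose summation produces exactly the shape $E^{p_1-1}(E+\bA^2)$. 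This factor-$\gamma$ gap between pointwise density and stopping average is the missing idea in your proposal.

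Two further points. First, in your Caccioppoli/Poincar\'e step the choice of $\bar f$ as ``the average of the $Q$ sheets'', i.e.\ $Q\a{\etaa\circ f}$, makes the Sobolev–Poincar\'e inequality false: for nearly constant but separated sheets one has $Df\equiv0$ while $\cG(f,Q\a{\etaa\circ f})$ is of the order of the separation, and this non-collapsed configuration is exactly the generic small-excess situation; you must take the generalized mean $\bar f\in\Iqs$ of \cite[Proposition 2.12]{DS1}, which is still admissible for the interpolation competitor. Second, beware of circularity: the approximation ``developed later in this paper'' with errors $E^{\gamma_1}(E+r^2\bA^2)$ is Theorem~\ref{t:main}, which is proved \emph{using} Theorem~\ref{t:higher1}; in the present proof only Proposition~\ref{p:max} (with $\delta_{11}=E^{2\beta}$), the Taylor expansion of the mass, and Theorem~\ref{t:o(E)} are available, and the gluing of any competitor to $T$ requires the isoperimetric filling of the slice mismatch, which your sketch does not account for. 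With these repairs your Caccioppoli route could plausibly furnish a substitute for Proposition~\ref{p:o(E)}, but the level-set iteration would still be indispensable.
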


In the case $Q=1$ or $\bar{n}=1$, it follows from the classical regularity theory (essentially due to De Giorgi, cf.~\cite{DG}) that
$T$ is a $C^{1,\alpha}$ submanifold in $\bC_2$.
However, when $\min \{Q, \bar{n}\}\geq 2$, $T$ is not necessarily regular and
Theorem~\ref{t:higher1} gives in fact an \textit{a priori} regularity estimate: in this case
\eqref{e:higher1} cannot be improved
(except for optimizing the constants $p_1$, $C$
and $\eps_{10}$). Indeed, for $Q=m=2$, $\Sigma = \R^4$ and $p_1=2$, \eqref{e:higher1}
is false no matter how large $\eps_{10}^{-1}$ and $C$ are chosen (cf.~\cite[Section 6.2]{ICM}).

In order to prove Theorem \ref{t:higher1} we develop the following tools:
\begin{itemize}
\item[(a)] a general scheme to approximate integer
rectifiable currents with multiple valued functions, relying
heavily on the ``metric analysis'' of \cite{DS1} and on
a modified ``Jerrard--Soner'' BV estimate for the slicing of currents
(cf.~Proposition~\ref{p:max});
\item[(b)] a simple and robust harmonic approximation of area minimizing
currents with multiple valued functions (cf.~Theorem~\ref{t:o(E)});
\item[(c)] the higher integrability of the gradient of Dir-minimizing
multiple valued functions (cf.~Theorem~\ref{t:hig fct} --
see also \cite{Sp10} for a different proof and related results).
\end{itemize}

In turn, Theorem \ref{t:higher1} will be combined with (a) to achieve a very accurate  
approximation result for area minimizing current, stated in Theorem \ref{t:main}. This theorem and some corollaries
of our analysis play a fundamental role in the papers \cite{DS4,DS5} and,
as explained in
the Foreword, have a counterpart in \cite[Chapter 3]{Alm}. However, our derivation of Theorem
\ref{t:main} differs substantially from Almgren's and when we use some of his ideas, as it is
for the existence of the almost projection $\ro^\star$ of
Section~\ref{s:ro*}, we give independent arguments for the main steps of the proof. 

\subsection{Strong approximation of area minimizing currents}
Concerning multiple valued functions we will follow the notation and terminology of \cite{DS1,DS2}. In particular,
a $Q$-valued function is a map $f$ (usually defined over a measurable subset $\Omega$ of $\mathbb R^m$) taking values in the space $\Iq (\R^n)$ of unordered $Q$-tuples of points in $\mathbb R^n$, denoted by $\sum_i \a{P_i}$. $\Iqs$ can be equipped with a natural metric $\cG$ (cf. \cite[Definition 0.2]{DS1}) and for $f$ measurable there exist
measurable functions $f_i : \Omega \to \R^n$ such that $f (x) = \sum_i \a{f_i (x)}$ $\forall x\in \Omega$ (cf. \cite[Proposition 0.4]{DS1}).
The functions $f_i$ are not uniquely determined, but in using
this notation we assume to have fixed some suitable $f_i$'s. 
Moreover, if $f$ is Lipschitz, resp. $f \in W^{1,2}(\Omega,\Iq(\R^n))$ (cf. \cite[Definition~0.5]{DS1}) and 
$\Omega$ is open, then there exist measurable
functions $Df_i \in L^\infty$, resp. $L^2$, such that $\sum_i \a{Df_i (x)}$ is the approximate differential of $f$ (cf. \cite[Definition~2.6]{DS1}) at a.e. $x$. In fact in this case the $f_i$'s and $Df_i$'s can be chosen so that the first are approximately differentiable a.e. and the second are their approximate differentials in the classical sense
(cf. \cite[Lemma 1.1]{DS2}). The Dirichlet energy of $f$ is then
${\rm Dir}(f,\Omega):=\int_\Omega |Df|^2$, where $|Df|^2 := \sum_i |Df_i|^2$.
Following \cite[Definition 1.10]{DS2}, we denote
by $\bG_f$ the integer rectifiable current, in $\mathbb R^{m+n}$, naturally associated to the graph of
a Lipschitz $Q$-valued map $f:\R^m \supset A\to \Iqs$.
Moreover, we will use                               
the notation ${\rm osc}\, (f)$ for the quantity $\inf_p \sup_x \cG (f(x), Q \a{p})$.

\begin{theorem}[Almgren's strong approximation]\label{t:main}
There exist constants $C, \gamma_1,\eps_1>0$ (depending on $m,n,\bar n,Q$)
with the following property. Assume that $T$ is area minimizing, satisfies Assumption \ref{ipotesi_base} in
the cylinder $\bC_{4r} (x)$ and 
$E =\bE(T,\bC_{4\,r} (x)) < \eps_1$.
Then, there is a map $f: B_r (x) \to \Iqs$, with $\supp (f(x))\subset \Sigma$ for every $x$,
and a closed set $K\subset B_r (x)$ such that
\begin{gather}\label{e:main(i)}
\Lip (f) \leq C E^{\gamma_1}, \\
\label{e:main(ii)}
\bG_f\res (K\times \R^n)=T\res (K\times\R^{n})\quad\mbox{and}\quad
|B_r (x)\setminus K| \leq
 C \, E^{\gamma_1} \left(E+ r^2\,\bA^2\right)\, r^m,\\
\label{e:main(iii)}
\left| \|T\| (\bC_{\sigma\,r} (x)) - Q \,\omega_m\,(\sigma\,r)^m -
{\textstyle{\frac{1}{2}}} \int_{B_{\sigma\,r} (x)} |Df|^2\right| \leq
 C \, E^{\gamma_1} \left(E+ r^2\,\bA^2\right)\, r^m \quad \forall\,0<\sigma \leq 1.
\end{gather}
If in addition $\bh (T, \bC_{4r} (x), \pi_0) := \sup \{|\p^\perp (x) - \p^\perp (y)|: x,y\in \supp (T) \cap \bC_{4r} (x)\} \leq r$, then
\begin{equation}\label{e:L-infty_est}
{\rm osc}\, (f) \leq C \bh (T, \bC_{4r} (x), \pi_0) + C (E^{\sfrac{1}{2}} + r\,\bA)\, r\, .
\end{equation}
\end{theorem}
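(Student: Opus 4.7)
The plan is to construct $f$ by a maximal-function truncation and then refine the classical quantitative estimates using Theorem \ref{t:higher1}. For a threshold $\lambda\in(0,1)$ to be fixed later, introduce the excess maximal function
\[
M(y):=\sup_{0<s\leq 2r}\bE(T,\bC_s(y)),\qquad y\in B_r(x),
\]
and the good set $K:=\{y\in B_r(x):M(y)\leq \lambda\}$. The general approximation scheme of item (a) in the introduction, whose core is the BV slicing bound of Proposition \ref{p:max}, produces a $Q$-valued map $f_0:K\to\Iqs$ satisfying $\bG_{f_0}\res(K\times\R^n)=T\res(K\times\R^n)$, $\supp(f_0(y))\subset\Sigma$, and $\Lip(f_0)\leq C\lambda^{1/2}$. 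I would then extend $f_0$ to the desired $f:B_r(x)\to\Iqs$ using the $\Iq$-valued Lipschitz extension theorem from \cite{DS1}, followed by a nearest-point retraction onto $\Sigma$, preserving the Lipschitz constant up to a dimensional factor.

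The size of $B_r(x)\setminus K\subset\{M>\lambda\}$ is controlled by splitting it into $\{M>\lambda\}\cap\{\bd\leq 1\}$ and $\{M>\lambda\}\cap\{\bd>1\}$. On the first piece, the Hardy--Littlewood weak-type $(p_1,p_1)$ inequality combined with \eqref{e:higher1} yields
\[
|\{M>\lambda\}\cap\{\bd\leq 1\}\cap B_r(x)|\leq C\lambda^{-p_1}\int_{\{\bd\leq 1\}\cap B_{2r}(x)}\bd^{p_1}\leq C\lambda^{-p_1}E^{p_1-1}(E+r^2\bA^2)\,r^m.
\]
On the second piece, the weak $(1,1)$ bound and the crude identity $\int\bd\leq E\,r^m$ give a measure $\leq CE\,r^m$, which is absorbed in the first estimate once $E$ is small. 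Choosing $\lambda=E^{2\gamma_1}$ with $\gamma_1>0$ sufficiently small (e.g.\ $\gamma_1=(p_1-1)/(2p_1+1)$) simultaneously yields $\Lip(f)\leq CE^{\gamma_1}$ and the bound \eqref{e:main(ii)}.

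For \eqref{e:main(iii)} I would write
\[
\|T\|(\bC_{\sigma r}(x))=\|\bG_f\|((B_{\sigma r}(x)\cap K)\times\R^n)+\|T\|((B_{\sigma r}(x)\setminus K)\times\R^n),
\]
expand the first term via the Taylor expansion of the area functional for Lipschitz $Q$-valued graphs developed in \cite{DS2} (which gives $Q\,|B_{\sigma r}\cap K|+\tfrac{1}{2}\int_{B_{\sigma r}\cap K}|Df|^2$ modulo an $O(\Lip(f)^4)$ remainder), and complete the integration to all of $B_{\sigma r}(x)$ at the price of errors of order $\Lip(f)^2\,|B_{\sigma r}\setminus K|$. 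The $T$-mass on the bad set is estimated by $Q\,|B_{\sigma r}\setminus K|+\e_T(B_{\sigma r}\setminus K)$, with the excess contribution handled through H\"older's inequality and \eqref{e:higher1}; every error term thereby fits within $CE^{\gamma_1}(E+r^2\bA^2)r^m$ by the chosen calibration of $\lambda$. Finally, the height bound \eqref{e:L-infty_est} follows because on $K$ the values of $f$ lie in $\supp(T)$, so $\cG(f(y),Q\a{p})\leq \bh(T,\bC_{4r}(x),\pi_0)$ for a suitable center $p$, while the Lipschitz extension and the retraction onto $\Sigma$ contribute at most $\Lip(f)\cdot r+Cr^2\bA\leq C(E^{1/2}+r\bA)\,r$.

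The main obstacle is the simultaneous calibration of $\lambda$: the Lipschitz bound, the bad-set measure, and the mass defect on the bad set must all degrade at the common rate $E^{\gamma_1}$. With only the trivial $L^1$ bound on $\bd$ this would force a loss giving error $O(E)$ instead of $O(E^{1+\gamma_1})$, which is insufficient for the applications in \cite{DS4,DS5}. Theorem \ref{t:higher1} is exactly what supplies the missing exponent gain, and threading this higher integrability through the weak-type estimate for $M$ and the H\"older bound on $\e_T$ over the bad set is the technical heart of the argument.
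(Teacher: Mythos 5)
There is a genuine gap in how you treat the region of large excess density, and it is fatal for \eqref{e:main(ii)} and \eqref{e:main(iii)}. Theorem \ref{t:higher1} only controls $\int_{\{\bd\leq1\}}\bd^{p_1}$; it gives no information on the excess measure carried by $\{\bd>1\}$ or, more generally, by the non-graphical part of $T$. Your bound for the second piece, $|\{M>\lambda\}\cap\{\bd>1\}|\leq|\{\bd>1\}|\leq CE\,r^m$, is far too weak: the target in \eqref{e:main(ii)} is $CE^{\gamma_1}(E+r^2\bA^2)r^m$, which for small $E$ (and, say, $r^2\bA^2\lesssim E$) is of order $E^{1+\gamma_1}r^m$, i.e.\ \emph{smaller} than $E\,r^m$ by exactly the factor $E^{\gamma_1}$ you are trying to gain; so this term cannot be ``absorbed in the first estimate'' --- the logic is reversed. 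The same problem reappears in \eqref{e:main(iii)}, where $\e_T(B_{\sigma r}\setminus K)$ is precisely the excess sitting on the bad region, which \eqref{e:higher1} does not see. In addition, your weak-type $(p_1,p_1)$ step is not justified as stated: $M$ is the maximal function of the \emph{measure} $\e_T$, not of the truncated density $\bd\,{\bf 1}_{\{\bd\leq1\}}$, and a point $y$ with $\bd(y)\leq1$ can have $M(y)$ large because the balls around $y$ meet the set where the excess concentrates; hence $\{M>\lambda\}\cap\{\bd\leq1\}$ is not contained in the superlevel set of the maximal function of $\bd\,{\bf 1}_{\{\bd\leq1\}}$, and splitting the measure instead reintroduces the uncontrolled mass of $\e_T$ on the bad region through a weak $(1,1)$ bound, with the same loss as above.

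What is missing is the minimality-based comparison argument to which the paper devotes most of its length: Almgren's strong excess estimate, Theorem \ref{t:higher}, namely $\e_T(A)\leq C(E^{\gamma_{11}}+|A|^{\gamma_{11}})(E+\bA^2)$ for Borel $A\subset B_{9/8}$, which is exactly the control of the excess where $\bd>1$ and on the bad set. Its proof requires constructing a competitor (Proposition \ref{p:conv}) by convolving $\xii\circ f$ and projecting back onto $\cQ$ with the almost projections $\ro^\star_\delta$ of Proposition \ref{p:ro*}, together with the weak excess estimate of Proposition \ref{p:o(E)} (which in turn uses the harmonic approximation Theorem \ref{t:o(E)} and the higher integrability of Dir-minimizers); Theorem \ref{t:higher1} enters only as one ingredient there, via the $L^{q_1}$ bound on $|Df|$ over $K$. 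Once Theorem \ref{t:higher} is in hand, the paper's proof of Theorem \ref{t:main} is close in spirit to your truncation scheme: one takes the $E^{\beta_{11}}$-Lipschitz approximation of Proposition \ref{p:max} (which already yields the Lipschitz, graphicality and oscillation bounds, so these need not be re-derived), bounds $|B_1\setminus K|$ by combining \eqref{e:max1} with Theorem \ref{t:higher} applied to $A=\{\bmax\be_T>2^{-m}E^{2\beta_{11}}\}$, and obtains \eqref{e:main(iii)} from the same estimate plus the Taylor expansion of the mass. Your exponent calibration on the good region is fine, but without Theorem \ref{t:higher} (or an equivalent competitor argument invoking minimality directly at this stage) the conclusion does not follow from Theorem \ref{t:higher1} alone.
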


The gain of a small power $E^{\gamma_1}$ in the three estimates
\eqref{e:main(i)}-\eqref{e:main(iii)} plays a crucial role in the papers \cite{DS4,DS5}.
When $Q=1$ and $\Sigma = \R^{m+1}$, this approximation theorem was first proved by De Giorgi in \cite{DG}.
In the generality above it appears in the big regularity paper for the first time (cf.~\cite[Sections 3.28-3.30]{Alm}).
Its proof is an elementary consequence of Theorem~\ref{t:higher} and the Lipschitz approximation algorithm mentioned
above. In turn Theorem~\ref{t:higher}
will be derived from Theorem~\ref{t:higher1} using a suitable competitor argument. 
In the case $Q=1$, the competitor is the convolution of (a first) Lipschitz approximation
with a smooth kernel, a classical argument which in fact appears already
in De Giorgi's seminal paper \cite{DG}, although in a slightly different form (cf.~\cite[Appendix]{DS-cm}).

Here we need a similar approach in the framework of multiple
valued functions. However, since $\Iqs$ is highly nonlinear,
it is not possible to regularize directly by convolution.
We exploit at this point a key idea
of Almgren, embedding $\Iqs$ in an Euclidean space and using some suitable
``almost projections'' $\ro^\star_\delta$. Our proof of the existence of these almost projections is however different
from the one given by Almgren in \cite[Theorem 1.3]{Alm} and, indeed, gives better
bounds in terms of the relevant parameters (see Proposition~\ref{p:ro*}).

\subsection{Harmonic approximation} A second ingredient which in \cite{DS4,DS5} will play a key role is the harmonic approximation of Theorem
\ref{t:harmonic_final} below (already mentioned in (b) above). 
In order to state it we need to set some notation about the ambient manifold
$\Sigma$.

\begin{remark}[Estimates on $\Psi$ in good Cartesian coordinates]\label{r:Psi}
Assume that $T$ is as in Assumption \ref{ipotesi_base} in the cylinder $\bC_{4r} (x)$. If 
$E := \bE( T, \bC_{4r} (x))$ is smaller than a geometric constant,
we can assume, without loss of generality, that the function
$\Psi: \R^{m+\bar n} \to \R^l$ parameterizing $\Sigma$ satisfies 
$\Psi(x) = 0$, $\|D \Psi\|_0 \leq C\, E^{\sfrac{1}{2}} + C \bA r$ and $\|D^2 \Psi\|_0 \leq C\bA$.
Indeed observe that
\[
E = \bE(T, \bC_{4r} (x)) =
\frac{1}{2\,\omega_m\,(4r)^m} \int_{\bC_{4r}(x)} |\vec T(y) - \vec \pi_0|^2 \, d\|T\|(y)\, .
\]
Thus, we can fix a point $p \in \supp(T)\cap \bC_{4r} (x)$ such that 
$|\vec{T}(p) - \vec \pi_0|\leq C\,E^{\sfrac{1}{2}}$.
Then, we can find an associated rotation $O \in O(m+n)$ such that
$O_\sharp\vec{T}(p) = \vec\pi_0$ and $|O|\leq C\, E^{\sfrac{1}{2}}$. 
It follows that $\pi := O (T_p\Sigma)$ is a $(m + \bar n)$-dimensional 
plane such that $\pi_0 \subset\pi$ and 
$\|\pi - T_p \Sigma\| \leq C E^{\sfrac12}$. We choose new coordinates
so that $\pi_0$ remains equal to $\R^m\times \{0\}$ but $\R^{m+\bar{n}}\times \{0\}$
equals $\pi$. 
Since the excess $E$ is assumed to be sufficiently small, 
we can write $\Sigma$ as the graph of a function $\Psi: \pi\to \pi^\perp$. If $(z, \Psi (z))=p$,
then $|D \Psi (z)|\leq C \|T_p \Sigma - \R^{m+\bar n}\times\{0\}\| \leq C E^{\sfrac12}$.
However, $\|D^2 \Psi\|_{0}\leq C\bA$ and so
$\|D \Psi\|_{0} \leq C E^{\sfrac{1}{2}} + C\bA r$. Moreover,
$\Psi (x) =0$ is achieved translating the system of reference by a vector orthogonal to
$\R^{m+\bar n}\times \{0\}$ and, hence, belonging to $\{0\}\times \R^n$.
\end{remark}

From now on, we will often consider $Q$-valued maps $y \mapsto w (y) \in \Iq (\R^n) = \Iq (\R^{\bar{n}}\times \R^l)$ 
which take the form $w (y) = \sum_i \a{(u_i (y), \Psi (y, u_i (y))}$, where $u = \sum_i \a{u_i}$ is evidently a map
taking values in $\Iq (\R^{\bar{n}})$. For $w$ we will then use the short-hand notation $w = (u, \Psi (y, u))$.
We also recall the notation for the average map $\eta:\Iq(\R^n) \to \R^n$ defined by
\[
\Iq(\R^n) \ni T = \sum_{i=1}^Q \a{P_i} \mapsto \eta(T):=\frac{1}{Q} \sum_{i=1}^Q P_i \in \R^n.
\]

\begin{theorem}[Harmonic approximation]\label{t:harmonic_final}
Let $\gamma_1$ be the constant of Theorem \ref{t:main}.
Then, for every $\bar{\eta}, \bar{\delta}>0$, there is a positive constant $\bar{\eps}_1$ with the following property. 
Assume that $T$ is as in Theorem \ref{t:main},
$E := \bE(T,\bC_{4\,r}(x)) < \bar{\eps}_1$
and $r \bA \leq E^{\sfrac{1}{4}+\bar\delta}$. If $f$ is the map in Theorem~\ref{t:main} and we fix Cartesian coordinates
as in Remark \ref{r:Psi}, then there exists a $\D$-minimizing function $u: B_r (x) \to \Iq (\R^{\bar{n}})$ such that $w := (u, \Psi (y, u))$ satisfies 
\begin{equation}\label{e:harmonic_final}
r^{-2} \int_{B_r (x)} \cG (f, w)^2 + \int_{B_r(x)} \left(|Df|-|Dw|\right)^2 + \int_{B_r(x)} |D(\etaa\circ f)-
D(\etaa \circ w)|^2\leq \bar{\eta}\, E \, r^m\, .
\end{equation}
\end{theorem}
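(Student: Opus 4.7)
I would argue by compactness and contradiction, in the style of De Giorgi's harmonic approximation but adapted to the $Q$-valued setting on a curved ambient submanifold. Suppose the conclusion fails. Then there exist $\bar\eta_0,\bar\delta_0>0$ and a sequence of area-minimizing currents $T_k$ satisfying Assumption \ref{ipotesi_base} in cylinders $\bC_{4r_k}(x_k)\subset \Sigma_k$ with $E_k := \bE(T_k,\bC_{4r_k}(x_k))\to 0$ and $r_k\bA_k\leq E_k^{\sfrac14+\bar\delta_0}$, such that no $\D$-minimizer on $B_{r_k}(x_k)$ makes \eqref{e:harmonic_final} hold with $\bar\eta_0$. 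After translating and dilating I reduce to $x_k=0$, $r_k=1$ and, by Remark \ref{r:Psi}, work in coordinates with $\|D\Psi_k\|_0\leq C(E_k^{\sfrac12}+\bA_k)$ and $\|D^2\Psi_k\|_0\leq C\bA_k$. Theorem \ref{t:main} produces Lipschitz $Q$-valued maps $f_k=(h_k,\Psi_k(y,h_k))$, closed sets $K_k\subset B_1$, and the energy identity \eqref{e:main(iii)}. Setting $H_k:=h_k/\sqrt{E_k}$, the identity \eqref{e:main(iii)} combined with $\bA_k^2=O(E_k^{\sfrac12+2\bar\delta_0})=o(1)$ gives $\int_{B_1}|DH_k|^2\leq C$ uniformly in $k$. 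After centering by subtracting a suitable $\R^{\bar n}$-valued constant and applying the $Q$-valued Sobolev compactness of \cite{DS1}, I pass to a subsequence for which $H_k\to H_\infty$ in $L^2(B_1,\Iq(\R^{\bar n}))$ and weakly in $W^{1,2}$.

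\textbf{The core of the argument is to prove that $H_\infty$ is $\D$-minimizing.} Given a competitor $G\in W^{1,2}(B_1,\Iq(\R^{\bar n}))$ with $G|_{\partial B_1}=H_\infty|_{\partial B_1}$, I construct an integer current $S_k$ with $\partial S_k=\partial(T_k\res\bC_1)$, $\supp(S_k)\subset\Sigma_k$, and
\begin{equation*}
\mass(S_k) - Q\omega_m \leq \tfrac{E_k}{2}\int_{B_1}|DG|^2 + o(E_k).
\end{equation*}
The recipe is: pick via Fubini radii $s_k\nearrow 1$ where the traces of $H_k$ on $\partial B_{s_k}$ are $L^2$-close to those of $H_\infty$ on $\partial B_1$; in the collar $B_1\setminus B_{s_k}$ interpolate between $H_k$ and a smooth approximation $G_\varepsilon$ of $G$ using the metric interpolation lemmas of \cite{DS1}; call $\tilde H_k$ the resulting Lipschitz $Q$-valued map on all of $B_1$; let $S_k$ be the integer current obtained by pushing $\bG_{\sqrt{E_k}\tilde H_k}$ onto $\Sigma_k$ through the graph $z\mapsto(z,\Psi_k(y,z))$ on $K_k$ and retaining $T_k$ on $B_1\setminus K_k$, closing up the modification by small cylindrical caps whose mass is $o(E_k)$ thanks to the measure bound on $B_1\setminus K_k$. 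The Taylor expansion of the area functional for Lipschitz $Q$-valued graphs (as developed in \cite{DS2}) then gives the mass estimate, with curvature contributions of order $\|D\Psi_k\|_0^2\cdot\int|D\tilde H_k|^2$ absorbed in $o(E_k)$ precisely because $r_k\bA_k\leq E_k^{\sfrac14+\bar\delta_0}$. Area minimality of $T_k$, combined with \eqref{e:main(iii)} applied to $T_k$, then yields $\int|DH_k|^2\leq\int|DG|^2+o(1)$, and weak lower semicontinuity gives $\int|DH_\infty|^2\leq\int|DG|^2$.

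Taking $G=H_\infty$ in the above argument also gives $\limsup_k\int|DH_k|^2\leq\int|DH_\infty|^2$, so weak convergence upgrades to strong $W^{1,2}$ convergence $H_k\to H_\infty$; in particular both $|DH_k|\to|DH_\infty|$ and $D(\etaa\circ H_k)\to D(\etaa\circ H_\infty)$ in $L^2$. Since $\D$-minimality of $Q$-valued maps is invariant under scalar multiplication, $u_k := \sqrt{E_k}\,H_\infty$ is $\D$-minimizing on $B_1$; with $w_k:=(u_k,\Psi_k(y,u_k))$ this is a legitimate candidate in the statement of the theorem. Using $\cG(f_k,w_k)^2\leq (1+c_0^2)\,\cG(h_k,u_k)^2 = (1+c_0^2)E_k\,\cG(H_k,H_\infty)^2$, the first term of \eqref{e:harmonic_final} divided by $E_k$ is bounded by $C\int\cG(H_k,H_\infty)^2\to 0$. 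Applying the chain rule to $\Psi_k(y,\cdot)$ and using $\|D\Psi_k\|_0=o(1)$, the second and third integrands are, up to an error of order $\|D\Psi_k\|_0^2(|Dh_k|^2+|Du_k|^2)=o(E_k)$, bounded by $C(|Dh_k|-|Du_k|)^2+C|D(\etaa\circ h_k)-D(\etaa\circ u_k)|^2$, and after division by $E_k$ these tend to $0$ by strong $W^{1,2}$ convergence of $H_k\to H_\infty$. This contradicts the failure of \eqref{e:harmonic_final} for every $\D$-minimizer.

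\textbf{Main obstacle.} The hardest point is the competitor construction in the middle step: producing an integer current $S_k$ lying in the curved $\Sigma_k$ with mass error strictly smaller than $E_k$. Three difficulties intertwine: (i) the $Q$-valued interpolation in the collar requires the metric-Sobolev machinery of \cite{DS1}, since $\Iq(\R^{\bar n})$ is not a linear space and one cannot simply average; (ii) on the bad set $B_1\setminus K_k$, where $T_k$ is not a graph, one must keep $T_k$ and control the joining boundary via Fubini-chosen slices, exploiting that $|B_1\setminus K_k|=o(1)$ and the excess measure there is small by construction; (iii) the ambient curvature introduces corrections of size $\|D\Psi_k\|_0^2 E_k\leq CE_k^{1+2\bar\delta_0}+C\bA_k^2 E_k$, which are $o(E_k)$ only by virtue of the precise hypothesis $r_k\bA_k\leq E_k^{\sfrac14+\bar\delta_0}$. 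Once this mass estimate is in place, the rest is a standard combination of weak-to-strong convergence for $\D$-minimizing $Q$-valued maps with the quantitative Lipschitz approximation of Theorem \ref{t:main}.
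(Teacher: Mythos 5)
Your overall strategy (contradiction, normalization by $E_k^{-\sfrac12}$, compactness, a competitor current built from a patched $Q$-valued graph plus minimality, then weak-to-strong upgrade) is the same as the paper's, which proves the theorem as a special case of Theorem \ref{t:o(E)}. However, there is a genuine gap at your compactness step. You claim that ``after centering by subtracting a suitable $\R^{\bar n}$-valued constant'' the normalized maps $H_k=h_k/\sqrt{E_k}$ converge in $L^2$ to a single $Q$-valued map $H_\infty$. The generalized Poincar\'e inequality only gives $\int_{B_1}\cG(H_k,\bar g_k)^2\leq C$ for some \emph{point} $\bar g_k\in\Iq(\R^{\bar n})$, and nothing in the hypotheses controls the diameter $d(\bar g_k)$: the current may consist of clusters of sheets whose mutual vertical separation, after division by $\sqrt{E_k}$, diverges (no height bound is assumed in Theorem \ref{t:main}, and even with one, the separation need only be $o(1)$, not $o(\sqrt{E_k})$). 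A single translation $\tau_{y_k}$ cannot make such a sequence precompact, so $H_\infty$ need not exist, and neither does your $k$-independent minimizing profile $u_k=\sqrt{E_k}H_\infty$. This is exactly the phenomenon the paper handles with the ``translating sheets'' of Definition \ref{d:pacchetti} and the concentration--compactness Proposition \ref{p:cc}: the limit object is a $k$-dependent sum $h_k=\sum_j\a{\tau_{y_k^j}\circ\zeta^j}$ of separately translated blocks, each block compact, and one must additionally argue (via the maximum principle, \cite[Proposition 3.5]{DS1}, and the boundedness of minimizers, \cite[Theorem 3.9]{DS1}) that such a sum is itself $\D$-minimizing, so that $\bar u_k=E_k^{\sfrac12}h_k$ is a legitimate, $k$-dependent choice of $u$ in the statement. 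Without this device your argument breaks exactly in the splitting regime.

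A second, smaller weakness is the gluing of the competitor: you keep $T_k$ over $B_1\setminus K_k$ and attach the graph over $K_k$, ``closing up by small cylindrical caps whose mass is $o(E_k)$ thanks to the measure bound on $B_1\setminus K_k$.'' The measure of $B_1\setminus K_k$ gives no control on the interface $\partial K_k$, so the mass of such caps is not estimable this way. The paper instead replaces the whole current inside a cylinder $\bC_{\bar r}$ whose radius is chosen by a coarea/Fubini argument so that the slice of $T_k-\bG_{f_k}$ on $\partial\bC_{\bar r}$ has mass $\leq CE_k^{1-2\beta}$, and then fills this boundary with an isoperimetric current in $\Sigma_k$ of mass $CE_k^{(1-2\beta)m/(m-1)}$, which is $o(E_k)$ precisely because $\beta<\frac{1}{2m}$; this exponent bookkeeping (and the retraction of the filling into $\Sigma_k$, Remark \ref{r:isop}) is missing from your sketch. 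Your treatment of the curvature errors via $r\bA\leq E^{\sfrac14+\bar\delta}$ and the final weak-to-strong convergence step are in line with the paper.
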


This theorem is the multi-valued analog of De Giorgi's harmonic
approximation (cf.~\cite{DG}).
We prove it via a compactness argument which, although very close in spirit
to De Giorgi's original one,
is to our knowledge new (even when $n=\bar{n}=1$).
Indeed, it uses neither the monotonicity formula nor a regularization
by convolution of the Lipschitz approximation, and we expect it to be useful
in different contexts.

\subsection{Persistence of $Q$-points}
A major ingredient in \cite{DS5} is the persistence of points of maximal multiplicity in the
approximation of Theorem \ref{t:main}, when interpreted in a suitable ``limiting sense''.
If the current $T$ has a point of density
$Q$, $f$ must satisfy the following integral bound (even though $f$ might have no values of
multiplicity $Q$). 

\begin{theorem}[Persistence of $Q$-points]\label{t:persistence}
For every $\hat{\delta}, C^\star>0$, there is $\bar{s}\in ]0, \frac{1}{2}[$ such that, for every $s<\bar{s}$, there exists $\hat{\eps} (s,C^*,\hat{\delta})>0$ with the following property. If $T$ is as in Theorem \ref{t:main}, $E := \bE(T,\bC_{4\,r} (x)) < \hat{\eps}$,
$r^2 \bA^2 \leq C^\star E$ and $\Theta (T, (p,q)) = Q$ at some $(p,q)\in \bC_{r/2} (x)$, then the approximation
$f$ of Theorem~\ref{t:main} satisfies
\begin{equation}\label{e:persistence}
\int_{B_{sr} (p)} \cG (f, Q \a{\etaa\circ f})^2 \leq \hat{\delta} s^m r^{2+m} E\, .
\end{equation}
\end{theorem}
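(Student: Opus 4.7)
The plan is a blow-up compactness contradiction combining three ingredients: the strong approximation of Theorem~\ref{t:main}, the harmonic approximation of Theorem~\ref{t:harmonic_final}, and the $C^{0,\alpha}$ regularity of $\D$-minimizers at their $Q$-points from the linear theory of \cite{DS1}. Negating the conclusion produces a sequence of currents $T_k$ with $E_k\downarrow 0$ and density-$Q$ points $(p_k,q_k)$; after normalizing by $\sqrt{E_k}$ the rescaled approximations should converge to a $\D$-minimizer $g$ on $B_1$; the density-$Q$ property at $(p_k,q_k)$ should pass to a $Q$-point of $g$ at the limit $p_\infty$; and the quantitative Hölder continuity of $g$ at this $Q$-point will contradict the assumed violation provided $\bar s$ is chosen small in terms of $\hat\delta$ and the universal Hölder exponent.

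\smallskip
\textit{Rescaling and compactness.} I would fix $\hat\delta,C^\star>0$, rescale so that $r=1$ and $x=0$, and suppose the bound fails for some $s<1/2$ along a sequence $T_k$ with $E_k\to 0$, $\bA_k^2\le C^\star E_k$, and $(p_k,q_k)\in\bC_{1/2}$ with $\Theta(T_k,(p_k,q_k))=Q$. Up to a subsequence $p_k\to p_\infty\in\overline{B_{1/2}}$; a vertical translation absorbed into $\Sigma_k$ (preserving the constants $c_0$ of Assumption~\ref{ipotesi_base}) further reduces to $q_k=0$. Setting $\bar f_k:=f_k/\sqrt{E_k}$ with $f_k$ from Theorem~\ref{t:main}, estimate \eqref{e:main(iii)} with $\sigma=1$ together with $\bA_k^2\le C^\star E_k$ gives $\int_{B_1}|D\bar f_k|^2\le C$. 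Theorem~\ref{t:harmonic_final} is applicable (e.g.~with $\bar\delta=\tfrac14$, since $r\bA_k\le\sqrt{C^\star}E_k^{\sfrac12}\le E_k^{\sfrac14+\bar\delta}$ for small $E_k$) and with $\bar\eta_k\to 0$ produces $\D$-minimizing $u_k$ with $\bar u_k:=u_k/\sqrt{E_k}$ close to $\bar f_k$ in $L^2$. By the compactness of $\D$-minimizers from \cite{DS1} and the harmonic approximation, up to a subsequence there is a $\D$-minimizer $g:B_1\to\Iq(\R^{\bar n})$ with $\bar f_k\to g$ in $L^2(B_1)$ (the rescaled $\Psi_k$ contribution is negligible since $\bA_k\to 0$ forces $\Psi_k\to 0$ uniformly on $B_1$).

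\smallskip
\textit{Density-$Q$ transfer and conclusion.} The key step, and the main obstacle, is to show $g(p_\infty)=Q\a{0}$. I plan to use the vertical rescaling $R_k(x,y):=(x,y/\sqrt{E_k})$ and its push-forward $\tilde T_k:=R_{k\sharp}T_k$. Densities are preserved by $C^1$ diffeomorphisms, so $\Theta(\tilde T_k,(p_k,0))=Q$. On the good set $K_k\times\R^n$ of Theorem~\ref{t:main} one has $\tilde T_k=\bG_{\bar f_k}$, and by \eqref{e:main(ii)} $|B_1\setminus K_k|\to 0$; combined with the $L^2$-convergence and uniform $W^{1,2}$-bound on $\bar f_k$, this gives $\tilde T_k\to \bG_g$ weakly as integer rectifiable currents in $\bC_1$, with uniformly bounded mass and vanishing interior boundary. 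The upper semicontinuity of density under such weak convergence (at the associated integer varifolds), combined with the elementary fact that a graph current of a $Q$-valued map has density at most $Q$ at every point, forces $\Theta(\bG_g,(p_\infty,0))=Q$ and hence $g(p_\infty)=Q\a{0}$. The $C^{0,\alpha}$ regularity of $\D$-minimizers (cf.~\cite[Theorem~0.9]{DS1}) with universal $\alpha=\alpha(m,Q)>0$ then yields $\cG(g(y),Q\a{0})\le C|y-p_\infty|^\alpha$ with $C$ bounded by $\mathrm{Dir}(g)$; consequently $|\etaa\circ g(y)|\le C|y-p_\infty|^\alpha$ and
\[
\int_{B_s(p_\infty)}\cG(g,Q\a{\etaa\circ g})^2\le C_0\, s^{m+2\alpha}.
\]
Setting $\bar s:=(\hat\delta/(2C_0))^{1/(2\alpha)}$ makes this $\le \tfrac12\hat\delta s^m$ for every $s<\bar s$. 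Passing to the sequence by $L^2$-convergence (and using $p_k\to p_\infty$ to pass from $B_s(p_\infty)$ to $B_s(p_k)$) and unrescaling gives $\int_{B_s(p_k)}\cG(f_k,Q\a{\etaa\circ f_k})^2\le\hat\delta s^m E_k$ for $k$ large, the desired contradiction. Should the varifold-level density semicontinuity prove delicate because of the anisotropic rescaling, a fallback is to use monotonicity at $(p_k,q_k)$ together with the mass-Dirichlet identity \eqref{e:main(iii)} to directly establish $\int_{B_\rho(p_k)}\cG(f_k,Q\a{0})^2\le o_k(1)\,E_k\rho^m$ uniformly in small $\rho$, which then forces $g(p_\infty)=Q\a{0}$ via Lebesgue differentiation.
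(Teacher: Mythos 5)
Your overall architecture (contradiction, harmonic approximation, Hölder regularity of the limiting $\D$-minimizer, smallness of $\bar s$ in terms of the Hölder exponent) is reasonable, but the central step — transferring the hypothesis $\Theta(T_k,(p_k,q_k))=Q$ into the statement $g(p_\infty)=Q\a{0}$ for the blow-up limit — is exactly where the proposal has a genuine gap, and the mechanisms you offer for it do not work. First, the claim that ``densities are preserved by $C^1$ diffeomorphisms'' is not valid for the anisotropic map $R_k(x,y)=(x,y/\sqrt{E_k})$ at a point where only the mass-ratio density is known: density at a point is preserved under isometries and homotheties, but under a fixed anisotropic linear map the mass ratio of a (possibly non-planar) tangent cone at the vertex changes in general, so $\Theta(\tilde T_k,(p_k,0))=Q$ is unjustified. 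Second, even granting it, the currents $\tilde T_k=R_{k\sharp}T_k$ are no longer area minimizing (nor stationary), their mass is not controlled — over $B_1\setminus K_k$ the vertical stretching by $E_k^{-\sfrac12}$ can inflate the mass of the non-graphical part by an uncontrolled factor, and \eqref{e:main(ii)} only bounds the Lebesgue measure of the projection — and upper semicontinuity of density under weak convergence fails for general integral currents; it relies on a monotonicity formula, which you have destroyed by rescaling. Your fallback is circular: the estimate $\int_{B_\rho(p_k)}\cG(f_k,Q\a{0})^2\leq o_k(1)\,E_k\rho^m$ ``uniformly in small $\rho$'' is precisely a quantitative persistence statement of the same nature as \eqref{e:persistence}, and no argument is given for how monotonicity plus \eqref{e:main(iii)} yields it. Note also that the theorem itself warns that $f$ (hence plausibly the limit, absent an argument) ``might have no values of multiplicity $Q$'': the persistence only holds in an averaged sense, and proving it is the whole content of the theorem.

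The paper's proof avoids any density transfer to the limit and works directly with $T$ at the scale $\sigma\sim s$: assuming \eqref{e:persistence} fails, Theorem \ref{t:harmonic_final} and the Hölder estimate for $\D$-minimizers show that the separation $\cG(f,Q\a{\etaa\circ f})$ is $\gtrsim(\hat\delta E)^{\sfrac12}$ on a set of large measure in $B_s(p)$; a pigeonhole choice of a thin annulus then shows that for most points $x$ there at least one sheet $(x,f_i(x))$ must leave the ball $\bB_\sigma((p,q))$, giving $\|T\|(\bC_\sigma(p)\setminus\bB_\sigma((p,q)))\geq c\,\hat\delta\,E\,\sigma^{m-2}$. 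This is played against an upper bound for $\|T\|(\bC_\sigma(p))$ (via Theorem \ref{t:main}, the harmonic approximation and the interior decay $\int_{B_\sigma}|Du|^2\leq C\,\D(u)\,\sigma^{m-2+2\kappa}$) and a lower bound for $\|T\|(\bB_\sigma((p,q)))$ coming from the monotonicity formula at the density-$Q$ point (Lemma \ref{l:monot}), yielding a contradiction for $\bar s$ and $\hat\eps$ small. The key geometric idea missing from your proposal is this conversion of $L^2$ sheet separation into mass of $T$ escaping small balls centered at $(p,q)$, which is what lets the monotonicity formula — applied to the genuine minimizing current, not to a rescaled object — do the work.
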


\subsection{A remark on notation}\label{ss:xii}
Finally we remark that we follow closely the notation of \cite{DS1,DS2}, except for a subtle point.
We denote by $\xii$ the map in \cite[Corollary 2.2]{DS1}, which
there was denoted by $\xii_{BW}$, 
since the symbol $\xii$ was in fact used for the ``precursor map''of  \cite[Theorem 2.1]{DS1}.
So, here $\xii:\Iqs\to\R^{N(Q,n)}$ is an injective function satisfying
the following three properties:
\begin{itemize}
\item[(i)] $\Lip(\xii)\leq1$;
\item[(ii)] $\Lip(\xii^{-1}|_\cQ)\leq C(n,Q)$, where $\cQ= \xii(\Iq)$;
\item[(iii)] $|Df| = |D (\xii\circ f)|$ almost everywhere
for every $f\in W^{1,2} (\Omega, \Iq)$.
\end{itemize}
This ``improved'' $\xii$ was suggested by Brian White and appears for the first time in \cite{Chang}.
The conclusion (iii) above is actually not explicitly stated in \cite{DS1}, but it
follows easily: 
indeed \cite[Corollary 2.2]{DS1} implies the identity
$|Df|=|D (\xii\circ f)|$ at every point of differentiability of a Lipschitz map
and, hence, almost everywhere. The case of a general $f\in W^{1,2} (\Omega, \Iq)$ can then be concluded
from \cite[Proposition 2.5]{DS1}.

We will use the notation $C$ and $c$ for generic positive dimensional
constants, which may possibly change from line to line: we will always understand that these constants depends only on the dimensional parameters 
$m, \bar n, n, Q, c_0$ of Assumption \ref{ipotesi_base}.

\section{Lipschitz approximation}\label{s:approx}
To begin with, we develop a robust algorithm to approximate currents $T$
as in Assumption \ref{ipotesi_base} with graphs of multiple valued functions.
Following the work of Ambrosio and Kirchheim \cite{AK},
we view the slice map $x\mapsto\langle T, \p, x\rangle$ as a function taking values
in the space $\bI_0 (\R^n)$ of $0$-dimensional integral currents.
A key estimate of Jerrard and Soner (cf.~\cite{AK,JS2}) implies that this map has
bounded variation in the metric sense introduced by Ambrosio in \cite{Amb}.
On the other hand, following \cite{DS1}, $Q$-valued functions can be viewed as Sobolev
maps taking values into (a subset of) $\bI_0 (\R^n)$.
Thus, finding Lipschitz multiple
valued approximations of $T$ can be seen as a particular 
case of the more general task of
finding Lipschitz approximations of BV maps with a fairly general target space.

\begin{definition}[Maximal function of the excess measure]\label{d:maximal}
Given a current $T$ as in Assumption \ref{ipotesi_base} we introduce the ``non-centered'' maximal function of $\e_T$:
\begin{equation*}
\bmax\be_T (y) := \sup_{y \in B_s (w)\subset B_{4r} (x)} \frac{\be_T (B_s (w))}{\omega_m\, s^m} = \sup_{y\in B_s (w)\subset B_{4r} (x)}
\bE (T,\bC_s (w)) .
\end{equation*}
\end{definition}

We can now state the main result of the section, which provides the first Lipschitz
approximation for rectifiable currents.

\begin{proposition}[Lipschitz approximation]\label{p:max}
There exists a constant $C>0$ with the following property.
Let $T$ and $\Psi$ be as in Assumption \ref{ipotesi_base} in the cylinder $\bC_{4s} (x)$.
Set $E=\bE(T,\bC_{4s}(x))$, let $0<\delta_{11}<1$ be such that $16^m E < \delta_{11}$,
and define 
\[
K := \big\{\bmax\be_T<\delta_{11}\big\}\cap B_{3s}(x)\, .
\]
Then, there is $u\in \Lip (B_{3s}(x), \Iqs)$ such that
$\supp(u(y)) \subset \Sigma$ for every $y \in B_{3s}(x)$ and
\[
\Lip (u)\leq C\,\big(\delta_{11}^{\sfrac{1}{2}} + \|D\Psi\|_0\big),
\qquad {\rm osc}\, (u) \leq C \bh (T, \bC_{4s} (x), \pi_0) + C s \|D \Psi\|_0\, ,
\]
\[
\bG_u \res (K\times \R^{n})= T\res (K\times \R^{n}),
\]
\begin{equation}\label{e:max1}
|B_{r}(x)\setminus K|\leq \frac{10^m}{\delta_{11}}\,\e_T \Big(\{\bmax\be_T > 2^{-m} \delta_{11} \}\cap B_{r+r_0s}(x)\Big) \quad\forall\;
r\leq 3\,s ,
\end{equation}
where $r_0 = 16 \sqrt[m] {E/\delta_{11}} < 1$.
\end{proposition}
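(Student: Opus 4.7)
My plan is to first construct $u$ on the ``good set'' $K$ by reading off the slices $\langle T,\p,y\rangle$ as $Q$-points in $\Iqs$, and then extend it to all of $B_{3s}(x)$ by the Almgren--Kirszbraun extension theorem for $\Iq$-valued Lipschitz maps (cf.~\cite[Theorem 1.7]{DS1}). The measure estimate \eqref{e:max1} will then follow from a Vitali covering argument on the bad set $\{\bmax\be_T\geq \delta_{11}\}$.

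Since $\p_\sharp T=Q\a{B_{4s}(x)}$ and $\partial T\res \bC_{4s}(x)=0$, for a.e.~$y\in B_{4s}(x)$ the slice $\langle T,\p,y\rangle$ is a positive integral $0$-current of total mass exactly $Q$, supported on $\Sigma\cap\p^{-1}(y)$. For $y\in K$ the bound $\bmax\be_T(y)<\delta_{11}<1$, combined with the monotonicity formula, rules out sheets of multiplicity $>Q$ accumulating at $y$, so the slice has precisely $Q$ points (counted with multiplicity) and defines a $Q$-tuple $u(y)=\sum_i\a{(y,P_i(y))}$. Since $\Sigma$ is the graph of $\Psi$ over $\R^{m+\bar n}$, I can write $u(y)=\sum_i\a{(v_i(y),\Psi(y,v_i(y)))}$ with $v\in\Iq(\R^{\bar n})$; the identity $\bG_u\res(K\times\R^n)=T\res(K\times\R^n)$ is then immediate from the slice identification.

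For the Lipschitz estimate on $K$, fix $y,y'\in K$ with $\rho=|y-y'|$ and $w=(y+y')/2$, so $y,y'\in B_{2\rho}(w)\subset B_{4s}(x)$ and, by the definition of $K$, $\be_T(B_{2\rho}(w))\leq \delta_{11}\omega_m(2\rho)^m$. A metric-valued BV-type slicing estimate \`a la Jerrard--Soner (in the sense of Ambrosio, cf.~\cite{Amb,AK}), combined with the crucial observation that at points of $K$ the slices are genuine $Q$-tuples --- so no mass cancellation between sheets can occur --- then yields $\cG(v(y),v(y'))^2\leq C\delta_{11}\rho^2$, hence $\Lip(v|_K)\leq C\delta_{11}^{\sfrac12}$. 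Composition with $\Psi$ produces the additional $\|D\Psi\|_0$ contribution in the Lipschitz bound for $u$, and the oscillation estimate combines $\bh(T,\bC_{4s}(x),\pi_0)$ in the $\pi_0^\perp$-direction with $Cs\|D\Psi\|_0$ from the vertical spread of the graph of $\Psi$. The Almgren--Kirszbraun extension of $v$ from $K\cap B_{3s}(x)$ to all of $B_{3s}(x)$, followed by setting $u(y):=\sum_i\a{(v_i(y),\Psi(y,v_i(y)))}$, preserves all of these bounds up to dimensional factors.

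For \eqref{e:max1}, each $y\in B_r(x)\setminus K$ admits, by a standard non-centered/centered comparison, a ball $B_{\tilde s(y)}(y)$ centered at $y$ with $\be_T(B_{\tilde s(y)}(y))\geq 2^{-m}\delta_{11}\,\omega_m\tilde s(y)^m$. The global bound $\be_T(B_{4s}(x))\leq E\,\omega_m(4s)^m$ combined with $16^m E<\delta_{11}$ forces $\tilde s(y)\leq r_0 s$, so these balls lie in $B_{r+r_0 s}(x)$. A Vitali $5r$-covering extracts a disjoint subfamily whose $5$-fold dilation still covers $B_r(x)\setminus K$; summing the excess mass over the disjoint subfamily and using the density lower bound produces the prefactor $5^m\cdot 2^m/\delta_{11}=10^m/\delta_{11}$, and the centers of the retained balls lie in $\{\bmax\be_T>2^{-m}\delta_{11}\}\cap B_{r+r_0 s}(x)$. \emph{The main obstacle} is the pointwise Lipschitz bound on $K$: converting an integral/BV slicing inequality into a uniform $\cG$-Lipschitz bound with the correct power $\delta_{11}^{\sfrac12}$ requires the refined Jerrard--Soner estimate alluded to in point (a) of the introduction together with the ``no-cancellation'' principle above; the remaining steps are standard manipulations.
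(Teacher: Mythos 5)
Your overall architecture coincides with the paper's: read the slices off on $K$ to get a $Q$-valued map, establish a Lipschitz bound there, extend by \cite[Theorem 1.7]{DS1}, compose with $\Psi$, and run a $5r$-covering argument for \eqref{e:max1}. The genuine gap sits exactly at the point you yourself flag as ``the main obstacle'': the bound $\cG(g(y),g(y'))\leq C\,\delta_{11}^{\sfrac12}|y-y'|$ on $K$ is never established, and the mechanism you propose for it --- the Jerrard--Soner BV estimate ``combined with the no-cancellation of the slices on $K$'' --- cannot produce the factor $\delta_{11}^{\sfrac12}$. The classical Jerrard--Soner bound controls $|D\Phi_\psi|(A)$ (with $\Phi_\psi(y)=\p^\perp_\sharp\langle T,\p,y\rangle(\psi)$) by the mass $\|T\|(A\times\R^n)$, which on balls meeting $K$ is of order $Q|A|$; the maximal-function argument then only yields a Lipschitz constant of order $1$, independent of $\delta_{11}$. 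The new input in the paper is the modified estimate $\bigl(|D\Phi_\psi|(A)\bigr)^2\leq 2m^2\,\e_T(A)\,\|T\|(A\times\R^n)$ of Proposition \ref{p:JS}, and its proof has nothing to do with positivity of the slices: one writes $\vec T=\langle\vec T,\vec\pi_0\rangle\,\vec\pi_0+\vec S$, observes that the $m$-form $d\psi\wedge\Xi$ produced by testing $\Phi_\psi$ against $\dv\,\ph$ has no $dx^1\wedge\cdots\wedge dx^m$ component, so only the tilted part $\vec S$ contributes, and concludes by Cauchy--Schwarz together with $\int|\vec S|^2\,d\|T\|\leq 2\,\e_T(A)$. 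It is the appearance of the excess on the right-hand side that gives $\bigl(\bmax|D\Phi_\psi|\bigr)^2\leq C\delta_{11}$ on $K$; one must then still pass from the countable family of scalar observables $\Phi_\psi$ to the metric $\cG$, which the paper does via $W_1\geq\cG$ (optimal matching/Birkhoff--von Neumann). Without this modified estimate your key Lipschitz bound, and hence the proposition, does not follow.

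A secondary flaw: to see that the slices over $K$ consist of exactly $Q$ positive Dirac masses you invoke the monotonicity formula, which is not available here --- the proposition does not assume $T$ area minimizing (the paper stresses this), nor even stationary. The correct, elementary argument uses $\int_A\mass(\langle T,\p,y\rangle)\,dy\leq\|T\|(A\times\R^n)$, giving $\mass(T_y)\leq\bmax\be_T(y)+Q<Q+1$ for a.e.\ $y\in K$; since the weights are integers summing to $Q$ by \eqref{e:(H)}, this forces exactly $Q$ points of multiplicity $+1$. Relatedly, your claim that for a.e.\ $y\in B_{4s}(x)$ the slice is a positive $0$-current of mass exactly $Q$ is false in general (cancellations can make the mass exceed $Q$); it holds only on $K$. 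The covering argument you sketch for \eqref{e:max1} is essentially the paper's and is fine.
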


The proof of the proposition is based on a BV estimate which
differs from the ones of \cite{AK,JS2}.
Note that we do not assume that $T$ is area minimizing.
Indeed, even the assumption \eqref{e:(H)} could be relaxed, but we do not pursue this issue here.

\subsection{The modified Jerrard--Soner estimate}
Recall that each element $S\in\bI_0 (\R^{m+n})$ is simply a finite sum of Dirac deltas,
$S=\sum_{i=1}^h w_i \,\delta_{z_i}$,
where $h\in\N$, $w_i\in\{-1,1\}$ and the $z_i$'s are (not necessarily distinct) points in $\R^{m+n}$.
Let $T$ be a current as in Assumption \ref{ipotesi_base} in the cylinder $\bC_4 $.
The slicing map $x\mapsto \la T,\p,x\ra$ takes values in $\bI_0(\R^{m+n})$ and is characterized by (cf.~\cite[Section 28]{Sim}):
\begin{equation}\label{e:char}
\int_{B_4}\la T,\p,x\ra (\varphi) dx = T (\varphi\,dx )
\quad\text{for every }\,\varphi\in C_c^\infty(\bC_4).
\end{equation}
Moreover
$\supp(\la T,\p,x\ra)\subseteq \p^{-1}(\{x\})$ and therefore
$\la T,\p,x\ra = \sum_i w_i\, \delta_{(x, y_i)}$.
The assumption \eqref{e:(H)} guarantees that $\sum_i w_i = Q$ for almost every $x$.
In order to state our BV estimate, we consider the push-forwards of $\la
T,\p,x\ra$ into the vertical directions:
\begin{equation}\label{e:ident}
T_x:=\p^\perp_\sharp \big(\la T,\p,x\ra\big)\in\bI_0(\R^{n})\, .
\end{equation}
It follows from \eqref{e:char} that the currents $T_x$ are characterized through the identity:
\begin{equation}\label{e:carat_T_x}
\int_{B_4} T_x (\psi) \ph(x)\, dx = T (\ph(x)\, \psi(y)\,dx )
\quad\text{for every }\,\ph\in C_c^\infty(B_4),\;\psi\in C_c^\infty(\R^{n}).
\end{equation}

\begin{proposition}[BV estimate]\label{p:JS}
Assume $T$ satisfies Assumption \ref{ipotesi_base} in $\bC_4$ (i.e. $r=1$ and $x=0$ in Assumption 
\ref{ipotesi_base}). For every $\psi\in
C^\infty_c(\R^{n})$, set $\Phi_\psi(x):= T_x (\psi)$.
If $\norm{D\psi}{\infty}\leq 1$, then $\Phi_\psi\in BV (B_4)$ and satisfies
\begin{equation}\label{e:BV}
\big(|D\Phi_\psi|(A)\big)^2\leq 2m^2 \,\e_T(A)\, \|T\| (A\times\R^{n}) \quad\text{for every Borel set }\,
A\subseteq B_4.
\end{equation}
\end{proposition}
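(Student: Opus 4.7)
My plan is to derive the BV estimate by testing $\int \Phi_\psi\, \dv \zeta\, dx$ against smooth compactly supported vector fields $\zeta \colon B_4 \to \R^m$ with $\|\zeta\|_\infty \leq 1$. Given such a $\zeta$, I introduce the $(m-1)$-form
\[
\omega := \psi(y) \sum_{i=1}^m (-1)^{i-1} \zeta^i(x)\, \widehat{dx_i}, \qquad \widehat{dx_i} := dx_1 \wedge \cdots \wedge \widehat{dx_i} \wedge \cdots \wedge dx_m.
\]
Since $\psi \in C_c^\infty(\R^n)$ and $\zeta \in C_c^\infty(B_4)$, $\omega$ is compactly supported in $\bC_4$, so $T(d\omega) = \partial T (\omega) = 0$ by Assumption \ref{ipotesi_base}. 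Rearranging this identity and extracting the correct horizontal piece will express $\int \Phi_\psi\, \dv \zeta\, dx$ as an integral whose size is controlled by the excess.

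\textbf{Splitting $d\omega$.} A direct computation that exploits $\partial_{x_j} \psi \equiv 0$ gives
\[
d\omega = \psi(y)\, (\dv \zeta)(x)\, dx + \sum_{i=1}^m \sum_{\alpha=1}^n (-1)^{i-1} \zeta^i(x)\, \partial_{y_\alpha}\psi(y)\, dy_\alpha \wedge \widehat{dx_i}.
\]
Evaluating $T$ on the first summand, the characterization \eqref{e:carat_T_x} with $\varphi := \dv \zeta$ yields $T(\psi\, \dv \zeta\, dx) = \int_{B_4} \Phi_\psi\, \dv \zeta\, dx$. Using $T(d\omega) = 0$, everything therefore reduces to bounding the ``transverse'' second summand on the right.

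\textbf{Excess estimate.} Each basic $m$-form $dy_\alpha \wedge \widehat{dx_i}$ is orthogonal to $\vec \pi_0 = e_1 \wedge \cdots \wedge e_m$, so the components $\tau_{i,\alpha}(x,y) := \langle \vec T(x,y),\, dy_\alpha \wedge \widehat{dx_i}\rangle$ are entries of the orthogonal projection of $\vec T$ onto $\vec \pi_0^\perp$ and in particular satisfy $\sum_{i,\alpha} \tau_{i,\alpha}^2 \leq |\vec T - \vec \pi_0|^2$ at $\|T\|$-a.e.\ point. A componentwise estimate plus Cauchy--Schwarz in the indices $(i,\alpha)$ therefore yields, pointwise in $\|T\|$,
\[
\Big| \sum_{i,\alpha} (-1)^{i-1} \zeta^i\, \partial_{y_\alpha}\psi\, \tau_{i,\alpha}\Big| \leq C(m)\, |\zeta|\, |D\psi|\, |\vec T - \vec \pi_0|.
\]
Combining this with the standard identity $\int_{A \times \R^n} |\vec T - \vec \pi_0|^2\, d\|T\| = 2\, \e_T(A)$, which follows from $\p_\sharp T = Q\a{B_4}$ and $\partial T \res \bC_4 = 0$, and a final Cauchy--Schwarz in space, one gets
\[
\Big|\int_{B_4} \Phi_\psi\, \dv \zeta\, dx\Big| \leq C(m) \sqrt{2\, \e_T(A)\, \|T\|(A \times \R^n)}
\]
for any open $A \supset \supp \zeta$. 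Taking the supremum over such $\zeta$ delivers both $\Phi_\psi \in BV(B_4)$ and the stated inequality, with the constant $m^2$ appearing from a crude componentwise bound substituted for the sharper $\ell^2$-Cauchy--Schwarz. The extension from open to general Borel $A$ is then routine.

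\textbf{Main obstacle.} The substantive step is the algebraic identification $d\omega = \psi\, \dv \zeta\, dx + (\text{transverse})$ and the recognition that all transverse components of $\vec T$ involved are controlled by $|\vec T - \vec \pi_0|$; everything else is integration by parts disguised by the currents language. Two hypotheses are truly indispensable: that $\partial T$ vanishes on $\bC_4$ (to annihilate $T(d\omega)$), and that $\psi$ has compact support on $\R^n$ (to make $\omega$ a legitimate compactly supported test form in $\bC_4$). Notably, area-minimality of $T$ is not used, consistent with the scope envisaged in the remarks following the proposition.
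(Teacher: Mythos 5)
Your proof is correct and follows essentially the same route as the paper's: you test with the form $\omega=\psi\,\Xi$ (the paper's $\Xi=\sum_j\varphi_j\,d\hat x^j$), use $\partial T\res\bC_4=0$ to reduce to the transverse part $d\psi\wedge\Xi$, and then apply Cauchy--Schwarz together with $\int_{A\times\R^n}|\vec T-\vec\pi_0|^2\,d\|T\|=2\,\e_T(A)$. The only cosmetic difference is that you bound the transverse pairing by an $\ell^2$ Cauchy--Schwarz in the indices $(i,\alpha)$ instead of the paper's sup-norm bound $\||d\psi\wedge\Xi|\|_\infty\leq m$, which if anything yields a slightly better constant.
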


Note that in the usual Jerrard-Soner estimate the RHS of \eqref{e:BV} would be
$(\|T\|(A\times\R^{n}))^2$.
%A more general proposition holds if we relax \eqref{e:(H)} to less restrictive assumptions.
%However, we do not give further details here.
\begin{proof}
It is enough to prove \eqref{e:BV} for every open set $A\subseteq B_4$. To this aim, recall that:
\begin{equation}\label{e:def_TV}
|D\Phi_\psi|(A)=\sup\left\{\int_{A}\Phi_\psi(x)\,\dv\, \ph(x)\,dx\,:\,\ph\in C^\infty_c(A,\R^{m}),\;\norm{\ph}{\infty}\leq 1\right\}.
\end{equation}
For any smooth vector field $\ph$ we have $(\dv\, \ph(x))\,dx=d\,\Xi$, where
\[
\Xi=\sum_j \ph_j\,d\hat x^j\quad\text{and }\quad
d\hat x^j=(-1)^{j-1}dx^1\wedge\cdots\wedge dx^{j-1}\wedge dx^{j+1}\wedge\cdots\wedge dx^{m}.
\]
From \eqref{e:carat_T_x} and the assumption $\partial T\res \bC_4 = 0$ in \eqref{e:(H)}, we conclude that
\begin{align}\label{e:var}
\int_{A}\Phi_\psi(x)\,\dv\,  \ph(x)\,dx &= \int_{B_4}  T_x (\psi) \dv\, \ph(x)\,dx=
T (\psi\,\dv\,\ph\,dx)\notag\\
&=T (\psi\,d\,\Xi) =T ( d(\psi\,\Xi))- T (d\psi\wedge\Xi) =-T
(d\psi\wedge\Xi)\, .
\end{align}
Observe that the $m$-form $d\psi\wedge\Xi$ has no $dx$ component, since
\begin{equation}\label{e:forma_esplicita}
d\psi\wedge\Xi=\sum_{j=1}^m\sum_{i=1}^n
\frac{\de \psi}{dy^i}(y)\,\ph_j(x)\,dy^i\wedge d\hat x^j.
\end{equation}
Write $\vec{T}=\langle \vec T, \vec{\pi}_0\rangle\,\vec{\pi}_0+\vec{S}$. Then,
\begin{align*}
(T (d\psi\wedge\Xi))^2 &=\Big(\int \langle \vec{S} , d\psi\wedge\Xi\rangle\, d\|T\|\Big)^2
\leq \| |d\psi\wedge \Xi| \|_\infty^2 \|T\| (A\times \R^n)
\int_{A\times\R^{n}}|\vec{S}|^2\,d\norm{T}{},
\end{align*}
($|\cdot |$ denotes the norms  on $\Lambda_m$ and $\Lambda^m$
induced by the natural inner products $\langle, \rangle$). 
Since $|\vec{S}|^2 = 1-\langle \vec T, \vec \pi_0\rangle^2 \leq 2-2 \langle \vec{T}, \vec{\pi}_0\rangle$,
we have
\[
\int_{A\times\R^{n}}|\vec{S}|^2\,d\norm{T}{}
\leq 2 \int_{A\times\R^{n}}\left(1-\langle\vec T, \vec \pi_0\rangle\right)\,d\norm{T}{}=2\,\e_T(A).
\]
Moreover, by \eqref{e:forma_esplicita}, $\||d\psi\wedge\Xi| \|_\infty \leq m \norm{D\psi}{\infty}\,\norm{\ph}{\infty}\leq m$. Summarizing, we get
\begin{align*}\label{e:quasi_JS}
& \int_{A}\Phi_\psi(x)\,\dv\, \ph(x)\, dx 
\leq \left( 2m^2\, \e_T(A)\,\|T\| (A\times\R^n)\right)^{\sfrac{1}{2}}\qquad
 \forall \varphi\in C^\infty_c (A, \R^m),  \|\varphi\|_\infty\leq 1\, .
\end{align*}
Taking the supremum  over $\ph$'s
we conclude \eqref{e:BV} through \eqref{e:def_TV}.
\end{proof}

\subsection{Proof of Proposition \ref{p:max}}
Since the statement is invariant under translations and dilations,
without loss of generality we assume $x=0$ and $s=1$.
Consider the slices $T_x : = \p^\perp_\sharp \langle T, \p, x\rangle \in\bI_0(\R^{n})$ and recall that $\|T\| (A\times \R^{n}) \geq \int_A \mass (T_x)\, dx$ for every open set $A$
(cf.~\cite[Lemma 28.5]{Sim}).
Therefore,
\[
\mass(T_x)\leq\lim_{r\to 0}\frac{\|T\| (\bC_r(x))}{\omega_m\,r^m}\leq \bmax\be_T(x)+Q \quad\text{for almost every }x.
\]
Since $\delta_{11}<1$, we infer $\mass(T_x)<Q+1$ for a.e. $x\in K$.
There are, then, $Q$ functions $g_i: K \to \R^n$ such that $T_x=\sum_{i=1}^Q\delta_{g_i(x)}$ for a.e. $x\in K$.
Define $g:K\mapsto\Iqs$ as $g := \sum_i \a{g_i}$ and fix $\psi\in C^\infty_c(\R^{n})$. Proposition~\ref{p:JS} gives
\[
\left(|D\Phi_\psi| (B_r (y))\right)^2 \leq 2m^2\, \be_T (B_r (y))\, \|T\| (\bC_r (y)) = 2m^2\,
\be_T (B_r (y)) \big(Q |B_r (y)| + \be_T (B_r (y))\big)\, .
\]
Hence, if we define the maximal function 
\[
\bmax |D\Phi_\psi| (x) := \sup_{x\in B_r (y)\subset B_{4r}} \frac{|D\Phi_\psi|(B_r (y))} {|B_r (y)|}\, ,
\]
we conclude that
\begin{align*}
(\bmax |D\Phi_\psi| (x))^2
&\leq 2 m\, \bmax\be_T(x)^2+2m \,Q\,\bmax\be_T(x)\leq C \delta_{11} \quad \text{for every $x\in K$}.
\end{align*}
Therefore, the theory of $BV$ functions gives a dimensional constant $C$ such that
\begin{equation}\label{e:supremize}
|\Phi_\psi(x)-\Phi_\psi(y)|
%=\left|\sum_i\psi(g_i(x))-\sum_i\psi(g_i(y))\right|
\leq C\,\delta_{11}^{\sfrac{1}{2}}\,|x-y| \qquad \mbox{$\forall x, y\in K$ Lebesgue points of $\Phi_\psi$,}
\end{equation}
(see for instance \cite[Section 6.6.2]{EG}: although in that reference the authors use the {\em centered}
maximal function, the proof works obviously also in our context).
Consider next the Wasserstein distance of exponent $1$ on $0$-dimensional integral currents $S_1$, $S_2$:
\begin{equation}
W_1(S_1,S_2):=\sup \left\{\la S_1-S_2, \psi\ra\,:\, \psi\in C^1(\R^{n}),\;
\norm{D\psi}{\infty}\leq 1\right\}.\label{e:W1sup}
\end{equation}
Obviously, when $S_1=\sum_i\a{S_{1i}}, S_2=\sum_i\a{S_{2i}}\in \Iqs$, the supremum in \eqref{e:W1sup} can be
taken over a suitable countable subset of $\psi\in C_c^\infty(\R^{n})$, chosen independently of the $S_i$'s.
Moreover, we have that
\begin{equation}\label{e:wellknown}
W_1(S_1,S_2)=\min_{\sigma\in \Pe_Q}\sum_i|S_{1i}-S_{2\sigma(i)}|\geq
\min_{\sigma\in \Pe_Q}\Big(\sum_i|S_{1i}-S_{2\sigma(i)}|^2\Big)^{\sfrac{1}{2}} =
\cG(S_1,S_2).
\end{equation}
So $\cG(g(x) , g(y)) \leq C\,\delta_{11}^{\sfrac{1}{2}}\,|x-y|$ for a.e. $x,y\in K$.
(The first equality in \eqref{e:wellknown} is well-known, but not easy to find in the literature. It can be derived by suitably modifying the
arguments of \cite[4.1.12]{Fed}. Another quick derivation is the following. 
Consider the set $\Pi$
of probability measures $\pi$ on $\R^n\times \R^n$ of the form $\sum_{i,j} c_{ij} \delta_{(S_{1i}, S_{2j})}$, where the matrix of coefficients $c_{ij}$ 
consists of nonnegative entries with $\sum_k c_{kj} = 1$ and $\sum_k c_{ik}=1$ for every $i$ and $j$, i.e. it is a doubly stochastic matrix. It then follows from the Kantorovich duality, see for instance \cite[Theorem 1.14]{Vil}, that $W_1 (S_1, S_2) = \min_{\pi\in \Pi} \int |x-y|\, d\pi (x,y)$. Observe however that $\int |x-y|\, d\pi (x,y)$ is a linear function of the coefficients $c_{ij}$: the space of such matrices, also called Birkhoff polytope, is a compact convex set and so the minimum is attained on the subset of extremal points. By the classical
Birkhoff-von Neumann Theorem this set consists of the permutations matrices (see \cite{Birk}) and so
$\min_{\pi\in \Pi} \int |x-y|\, d\pi = \min_{\sigma\in \Pe_Q} \sum_i |S_{1i}- S_{2\sigma (i)}|$.)

Next, write $g (x) = \sum_i \a{(h_i (x), \Psi (x, h_i (x)))}$. Obviously $x\mapsto h (x) := \sum_i \a{h_i (x)} \in \Iq(\R^{\bar n})$ is a Lipschitz map on $K$ with Lipschitz constant $\leq C\, \delta_{11}^{\sfrac{1}{2}}$.
Recalling \cite[Theorem 1.7]{DS1}, we can extend it to a map $\bar{u}\in \Lip (B_3, \Iq (\R^{\bar n}))$ satisfying $\Lip(\bar u)\leq C\,\delta_{11}^{\sfrac{1}{2}}$ and
${\rm osc}\, (\bar{u})\leq C {\rm osc}\, (h)$.
Set finally $u(x) = \sum_i \a{(\bar{u}_i (x), \Psi(x, \bar{u}_i(x)))}$. 
We start showing the Lipschitz bound. Fix $x_1, x_2 \in B_3$ and assume, without loss of generality, that $\cG(\bar u(x_1), \bar u(x_2))^2 = \sum_i |\bar u_i(x_1) - \bar u_i(x_2)|^2$. Then
\begin{align*}
\cG(u(x_1), u(x_2))^2 & \leq \sum_i \big\vert (\bar{u}_i (x_1), \Psi(x_1, \bar{u}_i(x_1))) - (\bar{u}_i (x_2), \Psi(x_2, \bar{u}_i(x_2))) \big\vert^2\\
&\leq 2\sum_i \Big((1+\|D_y\Psi\|_0^2)|\bar u_i(x_1) - \bar u_i(x_2)|^2 + \|D_x\Psi\|_0^2 |x_1-x_2|^2 \Big)\\
& \leq 2 (1+\|D\Psi\|_0^2) \cG(\bar u(x_1), \bar u(x_2))^2 + 2 \|D\Psi\|_0^2 |x_1-x_2|^2\\
&\leq C (\delta_{11}+\|D\Psi\|_0^2) |x_1 -x_2|^2\, .
\end{align*}
As for the $L^\infty$ bound, let $\eta>0$ be arbitrary and $p \in \R^{\bar n}$
be such that ${\rm osc}(\bar u) \leq \sup_{x \in B_3} \cG(\bar u(x), Q\a{p})+\eta$. Proceeding as above
\begin{align*}
{\rm osc}(u)^2 &\leq \sup_{x \in B_3} \cG(u(x), Q\a{(p, \Psi(0,p))})^2 \\
&\leq 2 \sup_{x \in B_3} \Big((1+\|D \Psi\|_0^2)\cG(\bar u(x), Q\a{p})^2  +  \|D \Psi\|_0^2 |x|^2\Big)\\
& \leq 4 (1+\|D\Psi\|_0^2) \big({\rm osc}(\bar u)^2 +\eta^2\big) + 18\,\|D\Psi\|_0^2.
\end{align*}
Since ${\rm osc}(h) \leq \bh(T, \bC_4, \pi_0)$, the estimate on ${\rm osc} (u)$ follows letting $\eta\downarrow 0$.

The identity $\mathbf{G}_u \res (K\times \R^{n})= T\res (K\times\R^{n})$ is a consequence
of $u(x) = T_x$ for a.e. $x\in K$. Indeed, recall that both $T$ and $\bG_u$ are rectifiable and observe that $\langle \vec{T}, 
\vec\pi_0\rangle \neq 0$ $\|T\|$-a.e. on $K\times \R^n$, because $\bmax\be_T < \infty$ on $K$. Similarly,
$\langle \vec{\bG}_u, \vec{\pi}_0\rangle \neq 0$ $\|\bG_u\|$-a.e. on $K\times \R^n$, by \cite[Proposition 1.4]{DS2}.
Thus, $(\bG_u - T)\res K\times \R^n =0$ if and only if $(\bG_u - T) \res dx {\mathbf 1}_{K\times \R^n}=0$.
The latter identity follows from the slicing formula and the property $\langle T, \p, x \rangle =
\langle \bG_u, \p, x\rangle = \sum_i \delta_{(x, u_i (x))}$, valid for a.e. $x\in K$.

Finally, for each $x\in B_r\setminus K$ choose a ball $x\in B^x = B_{r(x)} (y(x)) \subset B_4$ such that $\e_T (B^x) \geq 2^{-m} \delta_{11} \omega_m r (x)^m$. By the $5r$-Covering theorem, we choose balls $\hat{B}^i =
B_{5r (x_i)} (y(x_i))$ which cover $B_r\setminus K$ and such that the balls $B^{x_i}$ are pairwise disjoint. We
then conclude
\begin{equation}\label{e:5r_covering}
|B_r \setminus K|\leq 10^m \delta_{11}^{-1} \e_T \left(\bigcup_i B^{x_i}\right)\, .
\end{equation}
Fix $y\in B^{x_i}$. Since $B^{x_i} \subset B_4$, we have $2^{-m} \delta_{11} \omega_m r (x_i)^m \leq \e_T (B^{x_i}) \leq \e_T (B_4) = 4^m \omega_m E$, which implies $2r (x_i) \leq r_0 <1$. Thus, $y\in B_{r+r_0} \subset B_4$.
By definition of $\bmax\be_T$ we obviously have $\bmax\be_T (y) \geq 2^{-m} \delta_{11}$. So
$\cup_i B^{x_i} \subset B_{r+r_0} \cap \{\bmax\be_T > 2^{-m} \delta_{11}\}$ and
\eqref{e:5r_covering} implies \eqref{e:max1}.

\section{Patching multiple valued graphs}
In this section we prove some complementary results to the theory of
multiple valued functions as exposed in \cite{DS1, DS2}.
In particular, we show here a concentration compactness principle for
$Q$-valued functions, and give an algorithm to construct suitable competitors for
the Dirichlet energy, which will be also used in \cite{DS5}.
We first introduce some terminology.

\begin{definition}[Translating sheets]\label{d:pacchetti}
Let $\Omega\subset\R^m$ be a bounded open set. A sequence of maps $\{h_k\}_{i\in \N}\subset W^{1,2}(\Omega,\Iqs)$ is called a sequence of {\em translating sheets} if there are:
\begin{itemize}
\item[(a)] integers $J\geq1$ and $Q_1, \ldots, Q_J\geq1$
satisfying $\sum_{j=1}^J Q_j = Q$,
\item[(b)] vectors $y^j_k\in \R^{n}$ (for  $j\in \{1, \ldots, J\}$ and $k\in \N$) with
\begin{equation}\label{e:separazione}
\lim_k |y^j_k - y^i_k|= + \infty\qquad \forall i\neq j,
\end{equation}
\item[(c)] and maps $\zeta^j\in W^{1,2}(\Omega, \I{Q_j})$ for $j\in \{1, \ldots, J\}$,
\end{itemize}
such that $h_k=\sum_{j=1}^J\llbracket\tau_{y^j_k}\circ \zeta^j\rrbracket$, 
where for any generic $y \in \R^n$ we denote by $\tau_{y} : \Iq(\R^n) \to \Iq(\R^n)$ the translation map
(cp.~\cite[Section 3.3.3]{DS1})
\[
\Iq(\R^n) \ni T = \sum_i\a{P_i} \mapsto \tau_y(T) := \sum_i \a{P_i - y} \in \Iq(\R^n).
\]
\end{definition}

\begin{remark}\label{r:separating} 
Assume that $h_k, Q_j$, $y^j_k$ and $\zeta^k$ satisfy all the requirements
of Definition \ref{d:pacchetti} except for \eqref{e:separazione}. 
Up to subsequences and relabellings,
assume that $y^1_k - y^2_k$ converges to a vector $2 \bar{y}$.
We can replace
\begin{itemize}
\item the integers $Q_1$ and $Q_2$ with $Q'= Q_1+Q_2$;
\item the vectors $y^1_k$ and $y_2^k$ with $y'_k = (y^1_k + y^2_k)/2$;
\item the maps $\zeta^1$ and
$\zeta^2$ with $\zeta' := \a{\tau_{\bar{y}} \circ \zeta^1}
+\a{\tau_{-\bar{y}} \circ \zeta^2}$.
\end{itemize}
The new collections $Q', Q_3, \ldots, Q_{J}$, $y'_k, y^3_k, \ldots, y^J_k$
and $\zeta', \zeta^3, \ldots, \zeta^{J}$, and the function
$h'_k := \a{\zeta'} + \sum_{j=3}^J\a{\zeta^j}$,
satisfy again all the requirements of Definition \ref{d:pacchetti} except,
possibly, for \eqref{e:separazione}. Moreover, $\|\cG(h_k',h_k)\|_{L^2}\to0$ and $|Dh'_k| = |Dh_k|$.
Obviously, we can iterate this procedure only a finite number of times,
obtaining a subsequence of translating sheets $\hat{h}_k$ asymptotic to $h_k$ in the $L^2$ distance with
$|D\hat{h}_k| = |Dh_k|$.
\end{remark}

\subsection{Concentration compactness} Translating sheets give a useful device to recover a suitable
``compactness statement'' for sequences of  maps
with equi-bounded energy.

\begin{proposition}[Concentration compactness]\label{p:cc}
Let $\Omega\subset \R^m$ be a Lipschitz bounded open set
and $(g_k)_{k\in\N} \subset W^{1,2}(\Omega,\Iq)$ a sequence of functions
with $\sup_k \int_\Omega |Dg_k |^2 < \infty$.
Then, there exist a subsequence (not relabeled) and a sequence of translating sheets $h_k$
such that $\norm{\cG(g_k,h_k)}{L^2}\to 0$ and the following inequalities
hold for every open $\Omega'\subset \Omega$ and any sequence of measurable sets
$J_k$ with $|J_k|\to 0$:
\begin{gather}
\liminf_{k\to+\infty} \left(\int_{\Omega'\setminus J_k} |Dg_k|^2 -\int_{\Omega'} |Dh_k|^2\right)\geq 0\label{e:cc2}\\
\limsup_{k\to+\infty} \int_\Omega
\left(|Dg_k|-|Dh_k|\right)^2 \leq \limsup_k \int_\Omega \left(|Dg_k|^2 - |D h_k|^2\right)\, .\label{e:cc3}
\end{gather}
\end{proposition}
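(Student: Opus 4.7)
My plan is to proceed by induction on $Q$, at each step peeling off one ``cluster'' of sheets that remains bounded after a common translation, and applying the inductive hypothesis to the residual $(Q-Q_1)$-valued map. The base case $Q=1$ is the classical $W^{1,2}(\Omega,\R^n)$ compactness: after translating by the mean, Poincaré--Rellich yields an $L^2$-convergent subsequence, and the claimed estimates follow from weak lower semicontinuity.

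For the inductive step, I would first normalize by setting $\bar g_k := \eta \circ g_k$ and translating so that $\bar g_k$ has vanishing mean on $\Omega$; since $Q\,|D\bar g_k|^2 \le |Dg_k|^2$, a subsequence of $\bar g_k$ converges in $L^2$. I then dichotomize according to whether $\|\cG(g_k,Q\a{\bar g_k})\|_{L^2(\Omega)}$ stays bounded in $k$. If it does, then $\xii \circ g_k$ is bounded in $W^{1,2}(\Omega,\R^{N(Q,n)})$; Rellich--Kondrachov together with the Lipschitz inverse of $\xii$ on $\cQ$ yields an $L^2$-limit $\zeta^1 \in W^{1,2}(\Omega,\Iq)$, so one takes $h_k = \zeta^1$ with $J=1$, $Q_1=Q$. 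Otherwise, along a subsequence some of the sheets must escape to infinity. Via a combinatorial argument on the pairwise distances $|g_k^{i_1} - g_k^{i_2}|$ among measurable sheet-representatives (furnished by \cite[Proposition 0.4]{DS1}), I would extract an integer $1 \le Q_1 < Q$, a translation $y^1_k \in \R^n$, and a pointwise decomposition $g_k = \a{p_k} + \a{q_k}$ with $p_k\in W^{1,2}(\Omega,\I{Q_1})$ and $q_k\in W^{1,2}(\Omega,\I{Q-Q_1})$, such that $\tau_{y^1_k}\circ p_k$ is bounded in $W^{1,2}$ (and hence converges in $L^2$ to some $\zeta^1$ via the $\xii$-embedding), while the clusters separate in measure. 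Applying the inductive hypothesis to $q_k$ produces translating sheets $\sum_{j\geq 2}\a{\tau_{y^j_k}\circ\zeta^j}$; setting $h_k := \a{\tau_{y^1_k}\circ\zeta^1} + \sum_{j\geq 2}\a{\tau_{y^j_k}\circ\zeta^j}$ and invoking Remark \ref{r:separating} to coalesce any clusters whose translations violate \eqref{e:separazione} produces the required translating sheets. The crucial identity $|Dg_k|^2 = |Dp_k|^2 + |Dq_k|^2$ holds pointwise since the decomposition partitions the sheet labels.

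The $L^2$-convergence $\|\cG(g_k,h_k)\|_{L^2} \to 0$ is immediate from the construction. For the energy estimates, the key observation is that once the $y^j_k$ separate, for $k$ large $|Dh_k|^2 = \sum_j |D\zeta^j|^2 =: \varphi^2$ pointwise, so $|Dh_k|$ is eventually $k$-independent. Using the pointwise decomposition of $|Dg_k|^2$ into single-valued sheets (via a measurable sheet-matching that identifies each sheet of $g_k$ with a sheet of $h_k$, so that one recovers sheet-wise weak $W^{1,2}$ convergence), \eqref{e:cc2} reduces to the classical fact that if $v_k \rightharpoonup v$ weakly in $W^{1,2}(\Omega,\R^n)$ and $|J_k|\to 0$, then $\liminf_k \int_{\Omega'\setminus J_k}|Dv_k|^2 \geq \int_{\Omega'}|Dv|^2$, with the $J_k$-correction absorbed by the absolute continuity of $|Dv|^2$ and Cauchy--Schwarz on cross terms. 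For \eqref{e:cc3}, expanding the square reduces the claim to $\liminf_k \int_\Omega |Dg_k|\,\varphi \geq \int_\Omega \varphi^2$. Two applications of Cauchy--Schwarz in $\ell^2$ give the pointwise inequality $|Dg_k(x)|\,\varphi(x) \geq \sum_{j,i}\langle D(p_k^{(j)})_i(x),\,D\zeta^j_i(x)\rangle$; integrating and using the weak $L^2$-convergence $D(p_k^{(j)})_i \rightharpoonup D\zeta^j_i$ (valid after the measurable relabeling) closes the argument.

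The main technical obstacle is the measurable, $k$-uniform sheet-clustering in the inductive step: one must partition the $Q$ values of $g_k(x)$ into clusters of fixed cardinalities $Q_1,\ldots,Q_J$ along the subsequence, in a way measurable in $x$, so that the clusters asymptotically translate together. Once this combinatorial selection is set up, the remaining estimates are a routine combination of weak $W^{1,2}$-compactness, Cauchy--Schwarz, and the separation property \eqref{e:separazione}.
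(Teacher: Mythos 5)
There is a genuine gap, and it sits exactly where you flag it as ``the main technical obstacle'': the claimed pointwise Sobolev splitting $g_k=\a{p_k}+\a{q_k}$ of $g_k$ \emph{itself} into clusters of fixed cardinalities is not justified, and in general it is not achievable. With only an $L^2$/energy bound, the cluster structure of $g_k(x)$ varies with $x$; on the transition regions (which have small but nonzero measure) any labelling of which sheets belong to the escaping cluster is ambiguous, and a selection made by thresholding pairwise distances need not produce $W^{1,2}$ pieces. The paper resolves this not by decomposing $g_k$, but by first \emph{modifying} it on a small set: by the generalized Poincar\'e inequality (\cite[Proposition 2.12]{DS1}) there are points $\bar g_k\in\Iq$ with $\int\cG(g_k,\bar g_k)^2\leq CM$; when the diameter $d(\bar g_k)$ diverges, \cite[Lemma 3.8]{DS1} produces a nearby point $S_k$ with separation $s(S_k)\geq \beta\, d(\bar g_k)$, and the $1$-Lipschitz retraction of \cite[Lemma 3.7]{DS1} onto $\overline{B_{s(S_k)/16}(S_k)}$ yields $\hat f_k=\vartheta_k\circ g_k$ which \emph{does} split canonically into $\kappa_i$-valued Sobolev blocks, satisfies $|D\hat f_k|\leq|Dg_k|$, and coincides with $g_k$ outside a set of measure $\lesssim M/d(\bar g_k)^2\to0$ (Chebyshev). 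This ``agree off a vanishing set'' feature is not a technicality: it is precisely why the conclusions \eqref{e:cc2}--\eqref{e:cc3} are stated with the excised sets $J_k$, and your induction cannot close without some such device.

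A second, related problem is your reliance on ``sheet-wise weak $W^{1,2}$ convergence'' via a ``measurable sheet-matching that identifies each sheet of $g_k$ with a sheet of $h_k$.'' Single-valued selections of the individual sheets do not exist in general (branch points, e.g.\ the two-valued square root), so the pointwise inequalities you derive for \eqref{e:cc2} and for $\liminf_k\int|Dg_k|\,\varphi\geq\int\varphi^2$ are not available in the form you state them. The correct substitute, and the one the paper uses, is to work \emph{block-wise} through the biLipschitz embedding $\xii$ of Section \ref{ss:xii}: from $\|\cG(\tau_{-y^j_k}\circ w^j_k,\zeta^j)\|_{L^2}\to0$ and the energy bound one gets $D(\xii\circ\tau_{-y^j_k}\circ w^j_k)\rightharpoonup D(\xii\circ\zeta^j)$ in $L^2$, which gives \eqref{e:cc2} by lower semicontinuity on $\Omega'\setminus J_k$, and \eqref{e:cc3} via $\bigl(|Dw^j_k|-|D\zeta^j|\bigr)^2=\bigl(|D(\xii\circ\tau_{-y^j_k}\circ w^j_k)|-|D(\xii\circ\zeta^j)|\bigr)^2\leq|D(\xii\circ\tau_{-y^j_k}\circ w^j_k)-D(\xii\circ\zeta^j)|^2$ together with the triangle inequality in $\ell^2$ and weak convergence of the cross terms. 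Your base case, the overall induction on $Q$, and the use of Remark \ref{r:separating} to restore \eqref{e:separazione} are all in line with the paper; what is missing is the actual mechanism (separation point plus Lipschitz retraction, and acceptance of a modification on a vanishing-measure set) that makes the inductive clustering step legitimate.
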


\begin{proof}
We start proving, by induction on $Q$, the existence of translating sheets $\{h_k\}$ (and a subsequence) with $\|\cG(h_k,g_k)\|_{L^2} \to 0$ and satisfying the following additional property. If $J, Q_j$, $y_k^j$ and $\zeta^j$ 
are as in Definition \ref{d:pacchetti}, then there are $Q_j$ valued functions $w_k^j$  such that, after
setting $f_k=\sum_j\a{w_k^j}$, we have
\begin{equation}\label{e:extra}
\|\cG({f_k},g_k)\|_{L^2} + |\{g_k\neq {f_k}\}| \to 0,\quad 
\|\cG (\tau_{-y_k^j}\circ w^j_k, \zeta^j)\|_{L^2}\to 0
\quad\text{and}\quad
|D { f_k}|\leq |Dg_k|\, .
\end{equation}

If $Q=1$ the claim with $f_k=g_k$ is an easy corollary of 
the Poincar\'e inequality and the compact embedding $W^{1,2}\hookrightarrow L^2$.
Assuming that the claim holds for any $Q^*<Q$, we prove it for $Q$. 
By the generalized Poincar\'e inequality \cite[Proposition 2.12]{DS1}, there
exist points
$\bar g_k\in \Iqs$ and a real number $M$ such that
\begin{equation*}%\label{e:poinc}
\int_\Omega \cG (g_k, \bar{g}_k)^2 \leq C \int_\Omega |Dg_k|^2 \leq M < \infty \qquad \forall\; k\in\N\, .
\end{equation*}
Recall the separation $s(T)$ and the diameter $d(T)$ of a point $T=\sum_i\a{P_i}$ introduced
in  \cite[Definition 3.4]{DS1}:
$s (T):= \min \big\{ |P_i - P_j| : P_i\neq P_j\big\}$ and $d(T):= \max \{|P_i-P_j|\}$.
We distinguish between to cases.

\medskip

\noindent\textit{Case 1: $\liminf_k d(\bar{g}_k) < \infty$}.
After passing to a subsequence, we find $y_k\in\R^{n}$ such that the functions
$\tau_{y_k}\circ g_k$ are equi-bounded in the $W^{1,2}$-metric.
By the Sobolev embedding \cite[Proposition 2.11]{DS1},
there exists a $Q$-valued map $\zeta\in
W^{1,2}$ such that $\tau_{y_k}\circ g_k \to \zeta$ in $L^2(\Omega)$.

\medskip

\noindent\textit{Case 2: $\lim_k d(\bar g_k)=+\infty$.}
By \cite[Lemma 3.8]{DS1} there are points $S_k\in \Iq$ such that
\begin{equation*}%\label{e:Sl}
\beta\,d(\bar g_k) \leq s(S_k) < +\infty  \quad\text{and}\quad \cG(S_k,\bar g_k)\leq s(S_k)/32,
\end{equation*}
where $\beta$ is a dimensional constant.  
Write $S_k=\sum_{i=1}^{J} \kappa_i\a{P^i_k}$, with $P^i_k\neq P^j_k$ for $i\neq j$.
Both $J$ and $\kappa_i$ may depend on $k$ but they have a finite range: therefore,
after extracting a subsequence, we can assume that they do not depend on $k$.
Set next $r_k=\frac{s(S_k)}{16}$ and let $\vartheta_k$ be the retraction of $\Iqs$ into $\overline{B_{r_k}(S_k)}$
provided by \cite[Lemma 3.7]{DS1}.
Clearly, the functions $\hat f_k=\vartheta_k\circ g_k$ satisfy $|D \hat f_k|\leq |Dg_k|$ and
there are $\kappa_i$-valued functions $z_k^i$ such that
\[
\hat f_k=\sum_{i=1}^J\a{z_k^i}, \quad\text{with}\quad\|\cG (z_k^i, \kappa_i \a{P_k^i})\|_\infty
\leq r_k.
\]
Since $\kappa_i<Q$, we apply the inductive hypothesis to each sequence $(z_k^i)_k$ and, using Remark~\ref{r:separating} reach a subsequence (not relabeled) of $\hat f_k$,
a sequence of translating sheets $h_k$ and corresponding functions $f_k$
which satisfy \eqref{e:extra} with $\hat f_k$ replacing $g_k$. 

We next claim that \eqref{e:extra} holds even for $g_k$, i.e. that
$\lim_k \left(\|\cG (f_k, g_k)\|_{L^2} + |\{f_k\neq g_k\}|\right) = 0$.
To this aim, recall first that
\[
\left\{g_k\neq \hat f_k\right\}=\left\{\cG\left(g_k,S_k\right)>r_k\right\}\subseteq \left\{\cG\left(g_k,\bar g_k\right)>r_k/2\right\}.
\]
Thus, 
\begin{equation}\label{e:stima_sopra_livello}
\left|\left\{g_k\neq \hat f_k\right\}\right|\leq |\left\{\cG\left(g_k,\bar g_k\right)>r_k/2\right\}|
\leq \frac{C}{r_k^2}\int_{\left\{\cG\left(g_k,\bar g_k\right)>
\frac{r_k}{2}\right\}}\cG\left(g_k,\bar g_k\right)^2 \leq \frac{CM}{(d (\bar g_k))^2}.
\end{equation}
Since $d(\bar g_k)\to +\infty$ and \eqref{e:extra} holds with $\hat f_k$ replacing $g_k$,
we conclude $|\{f_k\neq g_k\}|\to 0$.
Next, since $\vartheta_k (\bar{g}_k)= \bar{g}_k$ and $\Lip (\vartheta_k) =1$, we have $\cG(\hat f_k,\bar g_k)\leq
\cG(g_k,\bar g_k)$.
Therefore, by the Sobolev embedding and the Poincar\'e inequality, for any $p\in ]2, 2^*[$, we infer 
\begin{align*}
&\int_{\Omega}\cG( \hat f_k,g_k)^2=
\int_{\{g_k\neq  \hat f_k\}}\cG( \hat f_k,g_k)^2
\leq 2\int_{\{ \hat f_k\neq g_k\}}\cG(\hat f_k,\bar g_k)^2+
2\int_{\{\hat f_k \neq g_k\}}\cG(\bar g_k,g_k)^2\notag\\
&\leq 4\int_{\{ \hat f_k\neq g_k\}}\cG(\bar g_k,g_k)^2
\leq C \norm{\cG\left(g_k,\bar g_k\right)}{L^{p}}^2
\left|\left\{\hat f_k\neq g_k\right\}\right|^{1-\frac{2}{p}}
\stackrel{\eqref{e:stima_sopra_livello}}{\leq} \frac{CM^{1-\sfrac{2}{p}}}{d(\bar g_k)^{2 - \sfrac{4}{p}}}
\int_{\Omega} |Dg_k|^{2}.
\end{align*}
Since $d(\bar g_k)$ diverges, this shows $\|\cG(\hat f_k,g_k)\|_{L^2}\to 0$ and by
inductive hypothesis that $\norm{\cG(f_k,g_k)}{L^2}\to 0$.

\medskip

We now show that \eqref{e:cc2} and \eqref{e:cc3} are consequences of \eqref{e:extra}. 
For each $j$ we consider the corresponding embedding $\xii_j: \cA_{Q_j} (\R^n)\to \R^{N(Q_j, n)}$ and,
by a slight abuse of notation, we drop the $j$ subscript.
Then, we conclude that 
$\xii\circ\tau_{-y^j_k}\circ w^j_k\to \xii\circ  \zeta^j$ in $L^2$ and
$\|D(\xii\circ\tau_{-y^j_k}\circ w_k^j)\|_{L^2}$ is a bounded sequence, from which
\begin{equation}\label{e:weaks}
D (\xii\circ \tau_{-y^j_k}\circ w^j_k) \rightharpoonup D 
(\xii \circ \zeta^j) \quad \mbox{in $L^2(\Omega)$}\, .
\end{equation}
If $J_k$ is a sequence of measurable sets with $|J_k|\downarrow 0$, then
${\bf 1}_{\Omega'\setminus J_k}\to {\bf 1}_{\Omega'}$ in $L^2(\Omega)$ and it follows from \eqref{e:weaks} that
\begin{equation*}%\label{e:weaks2}
D (\xii\circ \tau_{-y^j_k}\circ w^j_k) {\bf 1}_{\Omega'\setminus J_k}
\rightharpoonup D (\xii \circ \zeta^j){\bf 1}_{\Omega'}\quad \mbox{in $L^2(\Omega)$}\,,
\end{equation*}
and, hence,
\begin{equation*}%\label{e:semic}
\D (\zeta^j, \Omega')= \int_{\Omega'} |D (\xii \circ \zeta^j)|^2
\leq \liminf_k \int_{\Omega'\setminus J_k} 
|D (\xii\circ \tau_{-y^j_k}\circ w^j_k)|^2
= \liminf_k \int_{\Omega'\setminus J_k} |Dw^j_k|^2.
\end{equation*}
Summing over $j$, we obtain \eqref{e:cc2}.
As for \eqref{e:cc3}, set $J_k := \{g_k\neq f_k\}$. Thus,
\begin{align}
&\int_{\Omega\setminus J_k} (|Dg_k| - |Dh_k|)^2 \leq \sum_j \int_{\Omega\setminus J_k} (|Dw_k^j| - |D\zeta^j|)^2\nonumber\\
=&
\sum_j \int_{\Omega\setminus J_k} \big(|D(\xii \circ \tau_{-y_k^j} \circ w_k^j)| - |D(\xii \circ \zeta^j)|\big)^2
\leq \sum_j \int_{\Omega\setminus J_k} |D(\xii \circ \tau_{-y_k^j} \circ w_k^j) - D(\xii \circ \zeta^j)|^2\notag\allowdisplaybreaks\\
=& \sum_j \int_{\Omega\setminus J_k} \left(|D(\xii \circ \tau_{-y_k^j} \circ w_k^j)|^2 + |D(\xii \circ \zeta^j)|^2
- 2\,D(\xii \circ \tau_{-y_k^j} \circ w_k^j) \cdot D(\xii \circ \zeta^j)\right).\label{e:cc_ancora_1}
\end{align}
Therefore, by \eqref{e:weaks} (and taking into account that $|J_k|\to 0$) one gets
\begin{align}
&\limsup_{k\to + \infty}\int_{\Omega\setminus J_k} (|Dg_k| - |Dh_k|)^2 \notag\\
\leq &\lim_{k\to + \infty} \sum_j \int_{\Omega\setminus J_k} \Big(|D(\xii \circ \tau_{-y_k^j} \circ w_k^j)|^2 + |D(\xii \circ \zeta^j)|^2
- 2\,D(\xii \circ \tau_{-y_k^j} \circ w_k^j) \cdot D(\xii \circ \zeta^j)\Big)\notag\\
=&\limsup_{k\to + \infty} \int_{\Omega\setminus J_k} \sum_j |D(\xii \circ \tau_{-y_k^j} \circ w_k^j)|^2 - \int_\Omega
\sum_j |D(\xii \circ \zeta^j)|^2\nonumber\\
= &\limsup_{k\to + \infty} \int_{\Omega\setminus J_k} |Dg_k|^2 - \int_\Omega |Dh_k|^2.\label{e:cc_ancora_2}
\end{align}
On the other hand, since $|J_k|\to 0$ we conclude
\[
\limsup_{k\to\infty} \int_{J_k} \left(|Dg_k|-|Dh_k|\right)^2 = \limsup_{k\to\infty} \int_{J_k} |Dg_k|^2\, .
\]
Observe that, after passing to a subsequence, we can actually assume that all limsups are in fact limits.
Summing \eqref{e:cc_ancora_2} and the last equation we then conclude \eqref{e:cc3}.
\end{proof}

\subsection{Dirichlet competitors}
We consider next a standard procedure to construct competitors for the
Dirichlet energy of a sequence of functions with equi-bounded energy.

\begin{proposition}[Construction of a competitor]\label{p:raccordo}
Consider two radii $1\leq r_0<r_1 < 4$ and maps 
$g_k,h_k \in W^{1,2}(B_{r_1}, \Iqs)$ such that
$\{h_k\}_k$ is a sequence of translating sheets,
\[
\sup_k \D(g_k,B_{r_1})< + \infty \quad\text{and}\quad
\|\cG(g_k,h_k)\|_{L^2(B_{r_1}\setminus B_{r_0})} \to 0.
\]
For every $\eta>0$, there exist $r\in ]r_0, r_1[$, a subsequence of $\{g_k\}_k$
(not relabeled)
and functions $H_k\in W^{1,2} (B_{r_1}, \Iqs)$ such that
$H_{k}\vert_{B_{r_1}\setminus B_r} = g_{k}\vert_{B_{r_1}\setminus B_r}$ and
$\D(H_{k}, B_{r_1}) \leq \D(h_{k}, B_{r_1}) + \eta$.
In addition, there is a dimensional constant $C$ and a constant $C^*$ (depending on $\eta$ and the
two sequences, {\em but not on $k$}) such that
\begin{gather}
\Lip(H_{k})\leq C^*\, (\Lip(g_{k}) + 1) ,\label{e:raccordo1}\\
\|\cG(H_{k}, h_{k})\|_{L^2(B_{r})} \leq C \D(g_{k}, B_{r}) + 
C \D(H_{k}, B_{r})\, ,\label{e:raccordo2}\\
\|\etaa\circ H_{k}\|_{L^1(B_{r_1})} \leq C^*\, \|\etaa\circ g_{k}\|_{L^1(B_{r_1})} +
C \|\etaa\circ h_k\|_{L^1 (B_{r_1})}\, .\label{e:raccordo3}
\end{gather}
\end{proposition}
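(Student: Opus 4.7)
The plan is a Fubini-plus-interpolation construction: select a radius $r \in \,]r_0, r_1[$ on which the traces of $g_k$ and $h_k$ are well-behaved, and then interpolate in a thin annulus just inside $\partial B_r$. First, using the coarea formula together with $\sup_k \D(g_k, B_{r_1}) + \sup_k \D(h_k, B_{r_1}) < \infty$ (the latter being finite since $h_k$ is made of translates of the fixed $\zeta^j$'s), I would bound $\int_{r_0}^{r_1} \int_{\partial B_\rho} (|Dg_k|^2 + |Dh_k|^2)\,d\mathcal{H}^{m-1}\,d\rho$ uniformly in $k$. Combining this with the hypothesis $\|\cG(g_k, h_k)\|_{L^2(B_{r_1}\setminus B_{r_0})}\to 0$, a Fubini/Fatou argument plus a diagonal extraction yields a (not relabeled) subsequence and a radius $r$ such that
\[
\limsup_k \int_{\partial B_r} \big(|Dg_k|^2+|Dh_k|^2\big) < +\infty \quad\text{and}\quad \int_{\partial B_r} \cG(g_k, h_k)^2 \to 0 .
\]

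Next, on a thin annulus $A_\delta := B_r \setminus B_{r-\delta}$ with $\delta>0$ to be tuned later, I would apply the $Q$-valued radial interpolation lemma from \cite{DS1} (after composition with the bi-Lipschitz embedding $\xii$ of Section~0.6, to reduce to the Euclidean setting) to produce a map $I_k : A_\delta \to \Iqs$ with $I_k|_{\partial B_r} = g_k|_{\partial B_r}$, $I_k|_{\partial B_{r-\delta}} = h_k|_{\partial B_{r-\delta}}$, and
\[
\D(I_k, A_\delta) \,\leq\, C\delta\Big(\int_{\partial B_r}|Dg_k|^2 + \int_{\partial B_{r-\delta}} |Dh_k|^2\Big) + \frac{C}{\delta}\int_{\partial B_r}\cG(g_k,h_k)^2 + o_k(1),
\]
with Lipschitz constant $\leq C(\Lip(g_k) + \Lip(h_k) + \delta^{-1})$ and a parallel $L^1$ bound on $\etaa \circ I_k$. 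I would then set $H_k := g_k$ on $B_{r_1}\setminus B_r$, $H_k := I_k$ on $A_\delta$, and $H_k := h_k$ on $B_{r-\delta}$; the matching traces guarantee $H_k \in W^{1,2}$.

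The main obstacle is calibrating $\delta$ against the subsequence extraction: the prefactor $C\delta$ forces $\delta$ small, while the $C\delta^{-1}$ term forces $\delta$ not too small. The standard fix is to first choose $\delta>0$ so that $C\delta\cdot M < \eta/4$ (where $M$ is the uniform bound on the boundary energies), and only then pass to a further subsequence making $C\delta^{-1}\int_{\partial B_r}\cG^2 < \eta/4$. Combined with the concentration-compactness identity \eqref{e:cc3}, which on a sub-subsequence ensures that $\D(g_k, B_{r_1}\setminus B_r) - \D(h_k, B_{r_1}\setminus B_r) \to 0$, this produces the total energy bound $\D(H_k, B_{r_1}) \leq \D(h_k, B_{r_1}) + \eta$. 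The estimates \eqref{e:raccordo1} and \eqref{e:raccordo3} then follow from the corresponding bounds on $I_k$, with the constant $C^*$ absorbing $\delta^{-1}$ and $\Lip(h_k)$; and \eqref{e:raccordo2} follows from a Poincar\'e-type inequality applied to $\cG(H_k, h_k)$, which vanishes identically on $B_{r-\delta}$.
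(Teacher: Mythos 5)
Your overall skeleton (Fubini selection of a good radius plus interpolation in a thin annulus just inside $\partial B_r$) is indeed the paper's strategy, but two steps of your proposal do not work as written.

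First, you glue in $h_k$ itself on $B_{r-\delta}$ and interpolate with its trace, and your Lipschitz bookkeeping quietly assumes $\Lip(h_k)<\infty$: the interpolation estimate you quote has a $\Lip(h_k)$ term which you propose to absorb into $C^*$. But a translating sheet is built from sheets $\zeta^j$ that are only $W^{1,2}$ (in the application they are limits produced by Proposition~\ref{p:cc}, with no Lipschitz regularity), so $\Lip(h_k)$ may be infinite, the trace of $h_k$ on $\partial B_{r-\delta}$ need not be bounded, and the resulting $H_k$ need not be Lipschitz at all — so \eqref{e:raccordo1} fails for your construction. This is precisely why the paper's proof first replaces each $\zeta^j$ by a Lipschitz approximation (Lemma~\ref{l:approx}) with energy error $\bar\eps_1$, and moreover re-centers the average by the convolution $(\etaa\circ h_k)*\varphi_{\bar\rho}$ so that the $L^1$ bound \eqref{e:raccordo3} survives the regularization; only then does it apply the interpolation Lemma~\ref{l:interpolation}. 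Relatedly, your radius selection should also include the term $\int_{\partial B_r}|\etaa\circ g_k|$ (normalized by $\|\etaa\circ g_k\|_{L^1}$), which is needed to get \eqref{e:raccordo3} out of the interpolation, and a separate (Fubini) choice of $\delta$ is needed to make sense of $\int_{\partial B_{r-\delta}}|Dh_k|^2$.

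Second, your route to the global energy inequality is not valid. You invoke \eqref{e:cc3} to claim $\D(g_k,B_{r_1}\setminus B_r)-\D(h_k,B_{r_1}\setminus B_r)\to 0$, but this is neither a hypothesis of the proposition (here $h_k$ is an \emph{arbitrary} given sequence of translating sheets, not necessarily the one produced by Proposition~\ref{p:cc} for $g_k$, and only $L^2$-closeness on the annulus is assumed) nor a consequence of \eqref{e:cc2}--\eqref{e:cc3}, which are one-sided inequalities on the whole domain, not two-sided energy convergence on subdomains; $g_k$ may carry arbitrarily large extra energy on $B_{r_1}\setminus B_r$ while staying $L^2$-close to $h_k$. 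In fact, since any admissible $H_k$ must coincide with $g_k$ on $B_{r_1}\setminus B_r$, no construction can yield an energy comparison at the level of $B_{r_1}$ under these hypotheses: the estimate the paper actually proves, and the one used later (see \eqref{e:guadagno finale} in the proof of Theorem~\ref{t:o(E)}), is $\D(H_k,B_r)\leq \D(h_k,B_r)+\eta$. Your argument should target that bound — obtained by combining the interpolation estimate with the smallness of the boundary term $\int_{\partial B_r}\cG(g_k,h_k)^2$ and the regularization errors — rather than trying to manufacture energy convergence on the outer annulus.
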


In order to prove the proposition, we need to recall the following two lemmas,
which are slight variants of
\cite[Proposition 4.4]{DS1} and \cite[Lemma~2.15]{DS1}.

\begin{lemma}[Lipschitz approximation]\label{l:approx}
Let $f\in W^{1,2}(B_r,\Iq)$.
Then, for every $\eps>0$, there exists $f_\eps\in \Lip (B_r,\Iq)$ such that
\begin{equation}\label{e:approx interior}
\int_{B_r}\cG(f,f_\eps)^2+\int_{B_r}\big(|Df|-|Df_\eps|\big)^2
+ \int_{B_r}\big(|D(\etaa\circ f)|-|D(\etaa\circ f_\eps)|\big)^2
\leq \eps.
\end{equation}
If $f\vert_{\de B_r}\in W^{1,2}(\de B_r,\Iq)$,
then $f_\eps$ can be chosen to satisfy also
\begin{equation}\label{e:approx bordo}
\int_{\de B_r}\cG(f,f_\eps)^2+\int_{\de B_r}\big(|Df|-|Df_\eps|\big)^2 \leq \eps.
\end{equation}
\end{lemma}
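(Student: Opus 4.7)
My plan is to adapt the standard Federer--Ziemer maximal function truncation to the multiple valued setting via the biLipschitz embedding $\xii$ of Section~\ref{ss:xii}, with extra care for the $|D(\etaa\circ f)|$ term and, for the second assertion, for the simultaneous control of the trace on $\partial B_r$. The excerpt itself characterizes the statement as a slight variant of \cite[Proposition 4.4]{DS1}, so the bulk of the work is to verify that the additional content survives the same procedure.

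First I would set $g := \xii\circ f \in W^{1,2}(B_r, \R^{N(Q,n)})$, using property (iii) in Section~\ref{ss:xii} to get $|Dg| = |Df|$ a.e. For each sufficiently large $\lambda$, the classical Federer--Ziemer construction produces a map $g_\lambda \in \Lip(B_r, \R^{N(Q,n)})$ with $\Lip(g_\lambda) \leq C\lambda$ which coincides with $g$ on
\[
K_\lambda := \{x\in B_r : \bmax |Dg|(x) \leq \lambda\} \cap \{\textrm{Lebesgue points of } g\},
\]
and satisfies $|B_r\setminus K_\lambda| \leq C\lambda^{-2}\int_{\{\bmax|Dg|>\lambda\}}|Dg|^2$. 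Composing with a Lipschitz retraction $\rho$ of $\R^{N(Q,n)}$ onto the closed image $\cQ = \xii(\Iqs)$, which exists because $\xii^{-1}|_{\cQ}$ is Lipschitz, I would then set $f_\eps := \xii^{-1}\circ \rho \circ g_\lambda$ for a suitably large $\lambda = \lambda(\eps)$.

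Next I would verify the three integral bounds in \eqref{e:approx interior} as $\lambda\to\infty$. On $K_\lambda$ one has $g_\lambda = g$, hence $\rho\circ g_\lambda = g$, and therefore $f_\eps = f$, $Df_\eps = Df$ and $D(\etaa\circ f_\eps) = D(\etaa\circ f)$ almost everywhere; the last identity relies on the fact that $\etaa$ is globally Lipschitz from $(\Iq, \cG)$ to $\R^n$, so $|D(\etaa\circ f_\eps)| \leq C |Df_\eps|$ pointwise a.e. Off $K_\lambda$, the integrals of $|Df|^2$ and $|D(\etaa\circ f)|^2$ vanish as $\lambda\to\infty$ by absolute continuity of the integral, since $|B_r\setminus K_\lambda| \to 0$ and both integrands are in $L^1$; the analogous integrals of $|Df_\eps|^2$ and $|D(\etaa\circ f_\eps)|^2$, pointwise bounded by $C\lambda^2$, are controlled by
\[
C\lambda^2|B_r\setminus K_\lambda| \leq C\int_{\{\bmax|Dg|>\lambda\}}|Dg|^2\to 0.
\]
The remaining $\cG(f,f_\eps)^2$ term is handled on $B_r\setminus K_\lambda$ by a Sobolev--Poincar\'e inequality and the vanishing of $|B_r\setminus K_\lambda|$.

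For the boundary statement, the main obstacle — indeed, the only genuinely new difficulty compared to \cite[Proposition 4.4]{DS1} — is to arrange that the trace $f_\eps|_{\partial B_r}$ simultaneously approximates $f|_{\partial B_r}$ in $W^{1,2}(\partial B_r, \Iq)$. I would enlarge the control by intersecting $K_\lambda$ with an analogous good set on $\partial B_r$ built from the tangential maximal function of $g|_{\partial B_r}$, and then perform the Whitney-type extension so that, on every Whitney cube abutting $\partial B_r$ whose footprint lies in the boundary good set, the extension uses the corresponding boundary value of $g$. After a bi-Lipschitz flattening of $\partial B_r$ this reduces to the standard flat half-space truncation, and the boundary analogues of the absolute-continuity estimates above then yield \eqref{e:approx bordo}. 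Once this joint truncation is set up, the $\etaa$-bound is a free bonus, obtained from the Lipschitz property of $\etaa$ by the same argument as in the interior case.
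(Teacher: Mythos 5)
Your interior argument is essentially sound, but it amounts to re-proving the quoted result: \eqref{e:approx interior} is exactly what \cite[Proposition 4.4]{DS1} gives (the paper simply invokes it), and your maximal-function truncation of $\xii\circ f$ is the same mechanism. Two small caveats there: the existence of a Lipschitz retraction of $\R^{N}$ onto $\cQ$ does \emph{not} follow from $\xii^{-1}|_{\cQ}$ being Lipschitz (a biLipschitz copy of a circle admits no Lipschitz retraction); you should instead invoke the map $\ro$ of \cite[Theorem 2.1]{DS1}, which is precisely such a retraction and is used elsewhere in this paper. With that citation the interior part goes through.

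The genuine gap is in the boundary statement \eqref{e:approx bordo}, which is the only part of the lemma that goes beyond \cite[Proposition 4.4]{DS1}. Your plan — intersect $K_\lambda$ with a good set on $\partial B_r$ defined by the tangential maximal function and then run a Whitney extension that ``uses the boundary values'' on cubes abutting $\partial B_r$ — hides the crucial compatibility estimate. For the glued map to be $C\lambda$-Lipschitz and to agree with $f$ on a large portion of $\partial B_r$, you need $\cG(f(x),f(\xi))\leq C\lambda|x-\xi|$ for $x$ in the interior good set and $\xi$ in the boundary good set; this is not implied by the interior maximal bound at $x$ together with the tangential maximal bound at $\xi$, and controlling it forces you to put into the boundary bad set the points where a maximal function of $|Df|$ over half-balls centered on $\partial B_r$ exceeds $\lambda$. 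To conclude \eqref{e:approx bordo} you would then need $\lambda^2\,\cH^{m-1}$ of that set to vanish as $\lambda\to\infty$, and no weak-type estimate gives this: the restriction to a fixed codimension-one sphere of a maximal function of an $L^2(B_r)$ density is not weak-$L^2(\partial B_r)$ with the required scaling (a gradient concentrating on a half-ball of radius $\eps$ at a boundary point defeats it). So the reduction to a ``standard flat half-space truncation'' is not standard, and the ``free bonus'' claim does not hold as written. The paper sidesteps this entirely by a different route: extend $f$ $0$-homogeneously to $B_2$, approximate strongly in $W^{1,2}(B_2)$ by Lipschitz maps via \cite[Proposition 2.5]{DS1}, use Fubini to select spheres $\partial B_{1+\delta_j}$ on which the restrictions converge in $W^{1,2}$, and then dilate and take a diagonal sequence; because the homogeneous extension has the same (rescaled) trace on every nearby sphere, this yields simultaneous interior and boundary convergence without any boundary-adapted truncation. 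Either adopt that argument or supply the missing boundary compatibility estimate by some comparable device.
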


\begin{proof}
By an obvious scaling argument we can assume $r=1$.
We start noticing that \eqref{e:approx interior} 
is a corollary of \cite[Proposition~4.4]{DS1}.
On the other hand, if $f|_{\partial B_1} \in W^{1,2} (\partial B_1)$,
we extend the map to $B_2$ by setting $f(x) = f (\frac{x}{|x|})$ if $|x|\geq 1$.
We then can apply \cite[Proposition~2.5]{DS1}
to find a sequence of Lipschitz maps $f_k$ such that $f_k\to f$ strongly
in $W^{1,2}(B_2)$. Given $\delta>0$, define
the maps $f^\delta (x) = f ((1+\delta) x)$ and $f_k^\delta (x) = f_k ((1+\delta) x)$. Obviously,
$f^\delta_k \to f^\delta$ strongly in $W^{1,2} (B_1)$ and $f^\delta \to f$ strongly in $W^{1,2} (B_1)$ as
$\delta\downarrow 0$. By a standard Fubini argument, for each $j$ we can find a $\delta_j<\frac{1}{j}$ 
and a subsequence $\{f_{k,j}\}_k$ such that $f_{k,j}|_{\partial B_{1+\delta_j}} \to f|_{\partial B_{1+\delta_j}}$
(i.e.~$f^{\delta_j}_{k,j} |_{\partial B_1} \to f^{\delta_j}|_{\partial B_1}
= f|_{\partial B_1}$) 
strongly in $W^{1,2} (\partial B_{1+\delta_j})$ as $k\uparrow \infty$.
By standard diagonal argument we can arrange the subsequences so that $\{f_{k,j}\}\supset \{f_{k, j+1}\}$.
Thus, a suitable diagonal sequence
$\bar{f}_j := f^{\delta_j}_{k(j),j}$ has the property that $\bar{f}_j\to f$ in $W^{1,2} (B_1)$ and $\bar{f}_j|_{\partial B_1} \to
f|_{\partial B_1}$ in $W^{1,2} (\partial B_1)$.
\end{proof}

\begin{lemma}[Interpolation]\label{l:interpolation}
There exists a constant $C_0=C_0(m,n,Q)>0$ with the following property.
Assume $r\in ]1,3[$, $f\in W^{1,2}(B_r,\Iq)$ and $g\in W^{1,2}(\de B_{r},\Iq)$ are
given maps such that $f\vert_{\de B_r} \in W^{1,2}(\de B_r,\Iq)$.
Then, for every $\eps\in ]0,r[$ there exists a function $h\in W^{1,2}(B_r,\Iq)$ such
that $h\vert_{\de B_r}=g$ and
\begin{gather}
\int_{B_r} |Dh|^2 \leq
\int_{B_r} |Df|^2+\eps\int_{\partial B_r} \left(|D_\tau f|^2 + |D_\tau g|^2\right) +
\frac{C_0}{\eps} \int_{\de B_r}\cG(f,g)^2\, ,\label{e:raccordo_Dir}\\
\Lip (h) \leq C_0\left\{\Lip(f)+\Lip(g)+\eps^{-1}\sup_{\de B_r} \cG(f,g)\right\}\, ,\label{e:lip approx}\\
\int_{B_r} |\etaa \circ h|\leq C_0 \int_{\partial B_r} |\etaa\circ g| + C_0 \int_{B_r} |\etaa\circ f|\, ,\label{e:media_per_bu}
\end{gather}
(here $D_\tau$ denotes the tangential derivative).
\end{lemma}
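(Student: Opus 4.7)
The strategy is the classical ``interpolation-in-a-thin-annulus'' construction, adapted to the $Q$-valued setting via the biLipschitz embedding $\xii\colon \Iq \to \R^{N(Q,n)}$ of Section~\ref{ss:xii}. I would first fit a rescaled copy of $f$ inside the smaller ball $B_{r-\eps}$, so that its outer trace becomes $f|_{\partial B_r}$, and then radially interpolate from $f|_{\partial B_r}$ to $g$ on the annulus $A := B_r \setminus B_{r-\eps}$.

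Concretely, set $\lambda := r/(r-\eps)$, define $\tilde f(x) := f(\lambda x)$ for $x \in B_{r-\eps}$; for $x = \rho\omega$ with $\rho \in [r-\eps, r]$, $\omega \in \s^{m-1}$, let $t(\rho) := (\rho - (r-\eps))/\eps$ and put
\[
h(\rho\omega) := \xii^{-1} \circ \ro^\star \bigl((1-t)\,\xii(f(r\omega)) + t\,\xii(g(r\omega))\bigr),
\]
where $\ro^\star$ is an almost-projection (in the sense of Almgren, see the \emph{$\ro^\star$}-construction used later in the paper) onto a neighborhood of $\xii(\Iq)$. Continuity across $\partial B_{r-\eps}$ is automatic since at $\rho = r-\eps$ the interpolation equals $\xii(f(r\omega))$, so $h((r-\eps)\omega) = f(r\omega) = \tilde f((r-\eps)\omega)$; at $\rho = r$ we get $h|_{\partial B_r} = g$ as required.

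For \eqref{e:raccordo_Dir} I would estimate the two pieces separately. A change of variables gives $\int_{B_{r-\eps}} |D\tilde f|^2 = \lambda^{2-m}\int_{B_r} |Df|^2 \leq \int_{B_r} |Df|^2$ (using $m\geq 2$, $\lambda\geq 1$). On the annulus I would use polar coordinates, where $|Dh|^2 = |\partial_\rho h|^2 + \rho^{-2}|\nabla_\omega h|^2$ and $\ro^\star$ contributes only a bounded Lipschitz factor thanks to property~(ii) of $\xii$ and property~(iii), which yields $|Dh|=|D(\xii\circ h)|$. The radial derivative satisfies $|\partial_\rho h| \lesssim \eps^{-1}|\xii(f(r\omega)) - \xii(g(r\omega))| \leq \eps^{-1}\cG(f(r\omega), g(r\omega))$; integrating $\rho^{m-1}\,d\rho\,d\omega$ on $(r-\eps,r)\times\s^{m-1}$ produces the $\frac{C_0}{\eps}\int_{\partial B_r}\cG(f,g)^2$ term. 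The tangential derivative is bounded by $(1-t)|D_\tau f| + t|D_\tau g|$ evaluated at $r\omega$; its radial integration yields the $\eps\int_{\partial B_r}(|D_\tau f|^2+|D_\tau g|^2)$ term. The Lipschitz bound \eqref{e:lip approx} follows from the same decomposition: $\Lip(\tilde f) \leq \lambda\Lip(f)$, while on $A$ the radial part contributes $C\eps^{-1}\sup_{\partial B_r}\cG(f,g)$ and the tangential part $C(\Lip(f)+\Lip(g))$. For \eqref{e:media_per_bu}, since $\etaa$ composed with $\xii^{-1}\circ \ro^\star$ is Lipschitz, the pointwise bound $|\etaa\circ h(\rho\omega)| \leq C\bigl(|\etaa\circ f(r\omega)|+|\etaa\circ g(r\omega)|\bigr)$ holds on $A$, whose integration produces the $\eps$ prefactor on the boundary integrals; the inner part contributes $\int_{B_{r-\eps}}|\etaa\circ\tilde f| = \lambda^{-m}\int_{B_r}|\etaa\circ f|$.

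The main technical obstacle is the retraction step: $\xii(\Iq)$ is not a global Lipschitz retract of $\R^N$, so one cannot directly project the convex combinations. This is precisely what forces the use of Almgren's almost-projection $\ro^\star$, which is Lipschitz on a tubular neighborhood of $\xii(\Iq)$. One must verify that the linear segment connecting $\xii(f(r\omega))$ and $\xii(g(r\omega))$ stays in that neighborhood, which is true because its endpoints already lie on $\xii(\Iq)$ and its length is $\cG(f,g)$, a quantity that can be made small by a preliminary Lipschitz approximation of $f|_{\partial B_r}$ and $g$ via Lemma~\ref{l:approx} (at negligible cost in all three estimates). Once this is in place, the Lipschitz constants of $\ro^\star$ enter only through the dimensional factor $C_0$.
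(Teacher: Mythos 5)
Your construction on the annulus is built on the ``almost projection'' $\ro^\star_\delta$, and this is where the argument breaks: by Proposition~\ref{p:ro*}, $\ro^\star_\delta$ is \emph{not} the identity on $\cQ$ — it only satisfies $|\ro^\star_\delta(P)-P|\le C\delta^{8^{-\bar nQ}}$ there, and in fact it collapses neighborhoods of the lower-dimensional skeleta. Consequently, with your definition $h|_{\partial B_r}=\xii^{-1}\circ\ro^\star_\delta\circ\xii\circ g\neq g$, and $h$ is not even continuous across $\partial B_{r-\eps}$ (at $\rho=r-\eps$ the annulus formula gives $\xii^{-1}\circ\ro^\star_\delta(\xii(f(r\omega)))\neq f(r\omega)$), so the exact boundary condition — the whole point of the lemma, needed later to glue with the current — is lost. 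The premise you invoke to justify $\ro^\star_\delta$ is also false on two counts: $\cQ=\xii(\Iq)$ \emph{is} a global Lipschitz retract of $\R^N$, namely the map $\ro$ of \cite[Theorem 2.1]{DS1}, which fixes $\cQ$ and is used in this very paper (proof of Proposition~\ref{p:est2p} and formula \eqref{e:v}); and $\cG(f,g)$ is not small under the hypotheses — Lemma~\ref{l:approx} produces maps close to $f$, not $f$ close to $g$, and the term $\frac{C_0}{\eps}\int_{\partial B_r}\cG(f,g)^2$ exists precisely because the two traces may be far apart. Since the lemma tolerates a dimensional constant $C_0$, there is no need for a projection with Lipschitz constant $1+o(1)$: replacing $\ro^\star_\delta$ by $\ro$ repairs the boundary values, the continuity, and the estimates \eqref{e:raccordo_Dir} and \eqref{e:lip approx} — and at that point you have essentially rewritten the proof of \cite[Lemma 2.15]{DS1}, which is exactly what the paper cites for these two conclusions.

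Even after this repair, your treatment of \eqref{e:media_per_bu} has a genuine gap. The pointwise bound $|\etaa\circ h(\rho\omega)|\le C\big(|\etaa\circ f(r\omega)|+|\etaa\circ g(r\omega)|\big)$ is unjustified: neither $\ro$ nor $\ro^\star_\delta$ commutes with the average $\etaa$, and along the segment one only gets an estimate of the form $|\etaa\circ h(\rho\omega)|\le |\etaa\circ f(r\omega)|+C\,\cG(f(r\omega),g(r\omega))$, since the retraction may displace the interpolated point by a distance comparable to $\dist(\cdot,\cQ)\le \cG(f,g)$; the extra $\cG$-term is not controllable by the right-hand side of \eqref{e:media_per_bu}. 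Moreover, even granting your pointwise bound, integrating it over the annulus produces $C\,\eps\int_{\partial B_r}|\etaa\circ f|$, a term which does not appear in \eqref{e:media_per_bu} and cannot be absorbed, because the boundary trace of $\etaa\circ f$ is not controlled by $\int_{B_r}|\etaa\circ f|$. The paper handles the average by a different device: it interpolates the recentered maps $\sum_i\a{f_i-\etaa\circ f}$ and $\sum_i\a{g_i-\etaa\circ g}$ via \cite[Lemma 2.15]{DS1}, subtracts the average of the result so that it is exactly average-free, and adds to each sheet a separate single-valued ($Q=1$) interpolation $u$ of $\etaa\circ f$ and $\etaa\circ g$; then $\etaa\circ h=u$ identically, so \eqref{e:media_per_bu} reduces to an $L^1$ estimate for $u$ alone, with no $\cG$-error. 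Your argument needs this splitting (or an equivalent idea) to reach \eqref{e:media_per_bu}.
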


\begin{proof}
The first conclusion is an obvious corollary of \cite[Lemma 2.15]{DS1}. It is then straightforward to see that the map
constructed in the proof of \cite[Lemma 2.15]{DS1} satisfies also \eqref{e:lip approx}. As for the final claim, let $\bar{g} := \sum \a{g_i - \etaa \circ g}$, $\bar{f}:= \sum \a{f_i - \etaa\circ f}$ and consider the interpolation map $\bar{h}$ between $\bar{f}$ and $\bar{g}$ given by \cite[Lemma 2.15]{DS1}. Set $\hat{h} =
\sum_i \llbracket\bar{h}_i - \etaa\circ \bar{h}\rrbracket$ and observe that $\Lip (\hat{h}) \leq \Lip (\bar{h})$ and $\D (\hat{h})
\leq \D (\bar{h})$. 
We apply again \cite[Lemma 2.15]{DS1} in the case $Q=1$
to $\etaa \circ f$ and $\etaa \circ g$, and get the interpolation $u$. 
It is then easy to check that the map $h:= \sum_i \llbracket \hat{h}_i + u \rrbracket$ has all the desired properties.
\end{proof}

\begin{proof}[Proof of Proposition~\ref{p:raccordo}]
Set for simplicity 
$A_k := \|\cG(g_k,h_{k})\|_{L^2(B_{r_1}\setminus B_{r_0})}$ and
$B_k := \|\etaa\circ g_k\|_{L^1(B_{r_1})}$.
If $A_k \equiv 0$, then there is nothing to prove and so we can assume that, for 
a subsequence, not relabeled, $A_k >0$.
Assuming that for yet another subsequence
(not relabeled) $B_k>0$, we consider the function
\begin{equation}\label{e:def_psi}
\psi_k(r) := \int_{\de B_{r}} \left(|Dg_k|^2+ |Dh_{k}|^2\right) 
+ A_k^{-2} \int_{\de B_r} \cG (g_k, h_{k})^2
+ B_k^{-1} \int_{\de B_r}|\etaa\circ g_k|.
\end{equation}
By assumption $\liminf_k \int_{r_0}^{r_1} \psi_k (r)\, dr < \infty$.
So, by Fatou's Lemma, there is $r\in \left]r_0,r_1\right[$ and
a subsequence, not relabeled, such
that $\lim_k \psi_k (r) < \infty$. Thus, for some $M>0$ we have
\begin{gather}
\int_{\de B_r}\cG(g_k,h_{k})^2\to 0, \label{e:raggio1}\\
\D(h_{k},\de B_r)+ \D (g_k, \de B_r)\leq M,
% \quad \text{ for some }\;M<\infty,
\label{e:raggio2}\\
\int_{\de B_r}|\etaa\circ g_k| \leq M \, \|\etaa\circ g_k\|_{L^1(B_{r_1})}.\label{e:raggio3}
\end{gather}
In case $B_k =0$ for all $k$ large enough, we define $\psi_k$ dropping the last summand in
\eqref{e:def_psi} and reach the same conclusion.

Let $\zeta^j$ be the blocks of the translating sheets $h_k$ as in
Definition~\ref{d:pacchetti}.
We apply Lemma~\ref{l:approx} to each $\zeta^j$
and find Lipschitz 
functions $\zeta^j_{\eta}$ satisfying the conclusion of the lemma with
$\bar\eps_1 = \bar\eps_1(\eta,M)>0$ (which will be chosen later). We also choose
a standard radial convolution kernel $\varphi$ in $\R^m$ and a small parameter $\bar \rho$ (also
to be chosen later).  
Then, set 
\[
h_{k,\eta}:= \sum_{j=1}^J\llbracket \tau_{y_k^j}\circ \zeta^j_\eta\rrbracket
\quad\mbox{and}\quad
\bar h_{k,\eta}:= \sum_{i=1}^Q\llbracket (h_{k,\eta})_i - \etaa \circ h_{k,\eta} +(\etaa \circ h_k)*\varphi_{\bar\rho}
\rrbracket,
\]
and choose $\bar \rho$ so small that 
\begin{gather}
Q^2\|\etaa \circ h_k - (\etaa\circ h_k)*\varphi_{\bar\rho}\|_{L^2}^2 \leq \bar\eps_1,\label{e:regolarizzazione1}\\
\int_{B_r} \left(|D(\etaa \circ h_{k})|^2  - |D(\etaa \circ h_{k}*\varphi_{\bar\rho})|^2\right) \leq \bar \eps_1.\label{e:regolarizzazione2}
\end{gather}
Note that this is possible because, from the fact that $h_k$ is a sequence of translating sheets, it follows that
$\eta \circ h_k (x) = F(x) + p_k$ for some $F \in W^{1,2}$ and a sequence of vectors $p_k \in \R^n$.
Therefore $(\eta \circ h_k)*\varphi_{\bar\rho} = F *\varphi_{\bar\rho} + p_k$ and
$D(\eta \circ h_k)*\varphi_{\bar\rho} = DF *\varphi_{\bar\rho}$, and \eqref{e:regolarizzazione1}
and \eqref{e:regolarizzazione2} follows if $\bar \rho$ is sufficiently small by the usual convolution estimates.
In particular by very rough estimates,
\begin{gather}
\|\cG(g_k,\bar h_{k,\eta})\|_{L^2}\stackrel{\eqref{e:regolarizzazione1}}{\leq}
\|\cG(g_k,h_k)\|_{L^2} +2 \|\cG(h_k,h_{k,\eta})\|_{L^2} + \bar\eps_1
\leq o(1) + 3\,\bar\eps_1,\label{e:z1}\\
\D(\bar h_{k,\eta},\de B_r) \leq 2\,M +2\,\bar\eps_1\label{e:z2}
\end{gather}
and
\begin{align}\label{e:meno media}
\D(\bar h_{k,\eta},B_r) & = \sum_i \int_{B_r} \left| D(h_{k,\eta})_i - D(\etaa \circ h_{k,\eta}) + D(\etaa \circ h_{k}*\varphi_{\bar\rho})\right|^2\notag\\
& = \int_{B_r} \left( |D h_{k,\eta}|^2 - Q|D(\etaa \circ h_{k,\eta})|^2 + Q |D(\etaa \circ h_{k}*\varphi_{\bar\rho})|^2 \right)\notag\\
& = \D(h_{k,\eta},B_r) + 
Q\int_{B_r} \left(|D(\etaa \circ h_{k})|^2  - |D(\etaa \circ h_{k,\eta})|^2\right)\notag\\
&\quad+ 
Q\int_{B_r} \left( |D(\etaa \circ h_{k}*\varphi_{\bar\rho})|^2 - 
|D(\etaa \circ h_{k})|^2 \right)\notag
\allowdisplaybreaks\\
&\stackrel{\eqref{e:approx interior}, \eqref{e:regolarizzazione2}}{\leq}  \D(h_{k,\eta},B_r) + 2\,Q\,\bar \eps_1.
\end{align}
We can then apply
% \begin{gather}
% \D(h_\eta , B_{r})\leq \D(h, B_r)+\frac{\eta}{2},\label{e:i}\\
% \D(h_\eta,\de B_r)\leq \D(h, \de B_r)+1,\label{e:ii}\\
% \int_{\de B_r}\cG\big(h_\eta,h\big)^2\leq \frac{\eta^2}{2^7\,C_0\, Q \,(M+1)},\label{e:iii}
% \end{gather}
% where $C_0$ is the constant in the interpolation Lemma \ref{l:interpolation}.
Lemma~\ref{l:interpolation} to $\bar h_{k,\eta}$ and $g_k$ with
$\bar\eps_2 = \bar\eps_2(\eta, M)>0$, and get (up to subsequences) maps
$H_k$ satisfying $H_k|_{\de B_r}=g_k|_{\de B_r}$ and
\begin{align}\label{e:guadagno finale}
\D\left(H_k, B_{r}\right)
&\leq \D\left(\bar h_{k,\eta},B_{r}\right)
+\bar\eps_2\,\D\left(\bar h_{k,\eta},\de B_r\right)+
\bar\eps_2\,\D(g_k,\de B_{r})+\frac{C_0}{\bar\eps_2} \int_{\de B_{r}}\cG\left(\bar h_{k,\eta},g_k\right)^2\notag\\
&\leq \D(h_k, B_r)+ Q \bar \eps_1 + 3\,\bar\eps_2\,(M+\bar \eps_1) + 3\,C_0\, \bar\eps_2^{-1}\bar\eps_1
\notag
\end{align}
where in the last line we have used \eqref{e:raggio1}, \eqref{e:raggio2} and \eqref{e:z1} - \eqref{e:meno media}. 
An appropriate choice of the parameters $\eps_1$ and $\eps_2$ gives the desired bound $\D \left(H_k, B_{r}\right)
\leq \D(h_k, B_r) + \eta$.

Observe next that, by construction, $\limsup_k \Lip (\bar h_{k, \eta}) \leq C^*$, for some
constant which depends on $\eta$ and the two sequences, but not on $k$. Moreover,
\[
\|\cG (\bar h_{k, \eta}, g_k)\|_{L^\infty (\partial B_r)} \leq \|\cG (\bar h_{k, \eta}, g_k)\|_{L^2(\partial B_r)}
+ C \Lip (g_k) + C \Lip (\bar h_{k, \eta})\, .
\] 
Thus \eqref{e:raccordo1} follows from \eqref{e:lip approx}.

Finally, \eqref{e:raccordo2} follows from the
Poincar\'e inequality applied to $\cG(H_k,g_k)$ (which vanishes identically on $\partial B_r$), and
\eqref{e:raccordo3} follows from
\eqref{e:media_per_bu}, because of \eqref{e:raggio3} and $\|\etaa\circ \bar h_{k, \eta}\|_{L^1(B_{r})}
= \|(\etaa\circ h_k)*\varphi_{\bar\rho}\|_{L^1(B_{r})} \leq \|\etaa \circ h_k\|_{L^1(B_{r_1})}$ if
$\bar \rho$ is also chosen small enough such that $r+\bar\rho < r_1$.
\end{proof}

\section{Harmonic approximation}\label{s:o(E)}
In what follows we will always apply Proposition~\ref{p:max} with
$\delta_{11} = E^{2\beta}$ and under a certain scaling of $\bA$.

\begin{definition}[$E^\beta$-Lipschitz approximation]\label{d:Lip-approx}
Let $\beta \in \left(0, \frac{1}{2m}\right)$, $T$ be as in Proposition~\ref{p:max} 
such that $32 E^{(1-2\beta)/m} < 1$ and $s\bA \leq E^{{\sfrac{1}{4}}+\delta}$ for some $\delta>0$.
If the coordinates are fixed as in Remark~\ref{r:Psi}, the map $u$ given by Proposition~\ref{p:max}
for $\delta_{11} = E^{2\beta}$ is then called the
\textit{$E^{\beta}$-Lipschitz approximation of $T$ in $\bC_{3s}(x)$} and will be denoted by $f$.
\end{definition}

In this section we prove that, if $T$ is also area minimizing,
the corresponding $E^{\beta}$-Lipschitz approximation is close to a $\D$-minimizing function $w$. This comes with an $o(E)$-improvement of the estimates in Proposition~\ref{p:max}.

\begin{theorem}[First harmonic approximation]\label{t:o(E)}
For every $\eta_1, \delta>0$ and every $\beta\in (0, \frac{1}{2m})$, there exist constants $\eps_{12},C_{12}>0$ with the following property.
Let $T$ be as in Assumption \ref{ipotesi_base} in $\bC_{4s}(x)$ and assume it is area minimizing.
If $E =\bE(T,\bC_{4s}(x))\leq \eps_{12}$ and $s\bA \leq E^{{\sfrac{1}{4}}+\delta}$, then the
$E^{\beta}$-Lipschitz approximation $f$ in $\bC_{3s}(x)$ satisfies
\begin{equation}\label{e:few energy}
\int_{B_{2s} (x)\setminus K}|Df|^2\leq \eta_1 E\,\omega_m\,(4\,s)^m = \eta_1\,\e_T(B_{4s}(x)).
\end{equation}
Moreover, if we consider the coordinates of Remark \ref{r:Psi}, there exists a $\D$-minimizing function $u: B_{2s} (x)\to \Iq (\R^{\bar{n}})$
such that the map $B_{2s}(x) \ni y \mapsto w = (u, \Psi (y,u))$ satisfies
\begin{gather}
s^{-2} \int_{B_{2s}(x)}\cG(f,w)^2+
\int_{B_{2s}(x)}\big(|Df| - |Dw|\big)^2 \leq \eta_1 E \,\omega_m\,(4\,s)^m=
\eta_1\, \e_T(B_{4s}(x))\, ,\label{e:quasiarm}\\
\int_{B_{2s}(x)} |D(\etaa\circ f) - D (\etaa\circ w)|^2 \leq \eta_1 E \,\omega_m\,(4\,s)^m = \eta_1\, \e_T (B_{4s}(x))\, .\label{e:quasiarm_media}
\end{gather}
\end{theorem}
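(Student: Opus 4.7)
The plan is to argue by contradiction and compactness, in the spirit of De Giorgi's harmonic approximation but adapted to $Q$-valued maps lying on a curved ambient $\Sigma$. Suppose the conclusion fails: then there is a sequence $T_k$, ambient manifolds $\Sigma_k$ (with parametrizations $\Psi_k$), cylinders $\bC_{4s_k}(x_k)$ and $E^{\beta}$-Lipschitz approximations $f_k$ satisfying the hypotheses with $E_k\downarrow 0$ and $s_k\bA_k\leq E_k^{\sfrac{1}{4}+\delta}$, but violating at least one of \eqref{e:few energy}, \eqref{e:quasiarm}, \eqref{e:quasiarm_media}. By translation and dilation I reduce to $x_k=0$, $s_k=1$, and by Remark~\ref{r:Psi} I may assume $\Psi_k(0)=0$, $\|D\Psi_k\|_0\to 0$ and $\|D^{2}\Psi_k\|_0\to 0$. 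Proposition~\ref{p:max} with $\delta_{11}=E_k^{2\beta}$ then gives $\Lip(f_k)\to 0$, and the identity $\bG_{f_k}\res (K_k\times\R^n)=T_k\res(K_k\times\R^n)$ together with the excess bound yields $\int_{B_{3}}|Df_k|^{2}\leq C E_k$.

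Next I renormalize by $E_k^{\sfrac{1}{2}}$. Working via the embedding $\xii$ of Section~\ref{ss:xii}, set $g_k:=E_k^{-\sfrac{1}{2}}\,\xii\circ f_k$ (after subtraction of a suitable constant to control $\|g_k\|_{L^2}$ by Poincaré). Then $\sup_k \D(g_k,B_{5/2})<\infty$ and Proposition~\ref{p:cc} yields a subsequence, translating sheets $h_k$ with blocks $\zeta^{j}\in W^{1,2}(B_{5/2},\I{Q_j}(\R^n))$, such that $\|\cG(g_k,h_k)\|_{L^2}\to 0$ and \eqref{e:cc2}--\eqref{e:cc3} hold. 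The central claim is that each $\zeta^{j}$ is $\D$-minimizing. Suppose instead that on some ball $B_{r_0}(y_0)\Subset B_{5/2}$ one has a $W^{1,2}$ competitor $\hat\zeta$ to $\zeta^{j}$ with the same trace and $\D(\hat\zeta,B_{r_0})<\D(\zeta^{j},B_{r_0})-2\eta$. Splice $\hat\zeta$ back into $h_k$ to form competitor sheets $\tilde h_k$, then apply Proposition~\ref{p:raccordo} on an annulus $B_{r_1}\setminus B_{r_0}$ to produce $H_k\in W^{1,2}(B_{r_1},\Iq)$ which agree with $g_k$ outside some $B_r(y_0)$ and satisfy $\D(H_k,B_{r_1})\leq \D(\tilde h_k,B_{r_1})+\eta$.

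Unscaling, I define $F_k:=E_k^{\sfrac{1}{2}}\,\xii^{-1}\circ(\text{projected }H_k)$ and post-compose with the graph embedding into $\Sigma_k$ by $\bar F_k(y):=\sum_i\a{(F_{k,i}(y),\Psi_k(y,F_{k,i}(y)))}$, so that $\supp(\bar F_k(y))\subset\Sigma_k$; the current $Z_k:=\bG_{\bar F_k}\res\bC_r(y_0)+T_k\res(\bC_3\setminus\bC_r(y_0))$ is an admissible comparison surface for $T_k$ in $\Sigma_k\cap\bC_4$. Using the Taylor expansion of the graph mass from \cite{DS2}, namely $\mass(\bG_{\bar F})=Q|A|+\tfrac{1}{2}\int|DF|^{2}+O(\Lip(F)^{2}\int|DF|^{2})+O(\bA\int|F|+\bA^{2}r^{m+2})$, applied to both $f_k$ and $\bar F_k$ and combined with $s_k\bA_k\leq E_k^{\sfrac{1}{4}+\delta}$ and $\Lip(f_k),\Lip(F_k)\to 0$, the inequality $\mass(T_k)\leq\mass(Z_k)$ divided by $E_k$ gives in the limit $\sum_{j}\D(\zeta^{j},B_{r_1})\leq \sum_{j}\D(\zeta^{j},B_{r_1})-2\eta+\eta+o(1)$, a contradiction; thus every $\zeta^{j}$ is $\D$-minimizing. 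Defining $u$ by reversing the normalization and splitting $\R^n=\R^{\bar n}\times\R^{l}$ yields the candidate $w=(u,\Psi(y,u))$.

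The three estimates now follow at once. Apply \eqref{e:cc2} with $\Omega'=B_{2}$ and $J_k=B_{2}\setminus K_k$ (which has vanishing measure by \eqref{e:max1} and $\e_{T_k}(B_3)\leq C E_k$): since $\int_{B_2}|D h_k|^{2}$ converges to $\sum_j\D(\zeta^{j},B_2)$, the contribution $\int_{B_2\setminus K_k}|Df_k|^{2}/E_k$ is $o(1)$, which is \eqref{e:few energy}. The first term of \eqref{e:quasiarm} is exactly $E_k\,\|\cG(g_k,h_k)\|_{L^2}^{2}\to 0$. For the second term, combine \eqref{e:cc3} with the fact that each $\zeta^{j}$ is $\D$-minimizing, hence $\int|D\zeta^{j}|^{2}\leq\int|Dg_k|^{2}|_{\zeta^{j}\text{-piece}}+o(1)$, giving that the right-hand side of \eqref{e:cc3} is non-positive in the limit, whence $\int(|Df_k|-|Dw_k|)^{2}=o(E_k)$. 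Estimate \eqref{e:quasiarm_media} is obtained by the same argument applied to the scalar maps $\etaa\circ f_k$ and $\etaa\circ w_k$, using that $\etaa$ commutes with the translating-sheets decomposition up to constants. I expect the main obstacle to be the third paragraph, namely converting area-minimality of $T_k$ in the curved $\Sigma_k$ into $\D$-minimality of the flat-space limits $\zeta^{j}$: the Taylor expansion has to absorb simultaneously the $\bA$-error from $\Sigma_k$ (whence the precise exponent $\sfrac{1}{4}+\delta$) and the $\Lip$-error from the approximation, and one must be careful that the spliced current $Z_k$ genuinely lies in $\Sigma_k$, which is why the graph over $\Sigma_k$ is built from $F_k$ rather than from $\bar F_k$ directly.
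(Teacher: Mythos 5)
Your overall architecture (contradiction, normalization by $E_k^{\sfrac{1}{2}}$, concentration compactness into translating sheets via Proposition~\ref{p:cc}, splicing via Proposition~\ref{p:raccordo}, graph-competitor current in $\Sigma_k$, Taylor expansion plus minimality of $T_k$) is the same as the paper's, and your third paragraph correctly reproduces the competitor argument showing that the blocks $\zeta^j$ are $\D$-minimizing. The gap is in the last paragraph, where you claim the three estimates "follow at once". They do not: \eqref{e:few energy} and the strong-convergence parts of \eqref{e:quasiarm}--\eqref{e:quasiarm_media} require the \emph{no energy gap} statement $\limsup_k\int_{B_2}|Dg_k|^2\leq\int_{B_2}|Dh_k|^2$, and neither \eqref{e:cc2}--\eqref{e:cc3} nor the $\D$-minimality of the $\zeta^j$ gives it. Indeed \eqref{e:cc2} is a lower semicontinuity statement ($\liminf\int|Dg_k|^2\geq\int|Dh_k|^2$), and your inequality "$\int|D\zeta^j|^2\leq\int|Dg_k|^2|_{\zeta^j\text{-piece}}+o(1)$" points in the same (wrong) direction: it shows the right-hand side of \eqref{e:cc3} is $\geq 0$ in the limit, not $\leq 0$. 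Minimality of the limit sheets bounds $\D(\zeta^j)$ from above by competitors for $\zeta^j$; it says nothing about energy being lost (or parked on the bad set) along the sequence.

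In the paper this is exactly where a \emph{second} competitor argument enters, using the minimality of $T_k$ against the graph of the \emph{unmodified} sheets $h_k$ (spliced by Proposition~\ref{p:raccordo}), combined with the covering estimate \eqref{e:max1} and $\Lip(f_k)\leq CE_k^\beta$ to convert the assumed bad-set energy $\int_{B_2\setminus K_k}|Df_k|^2\geq c_1E_k$ into a deficit of the excess on the good set (inequality \eqref{e:improv2}); the Taylor expansion then shows the $h_k$-graph beats $T_k$ by a definite multiple of $E_k$, a contradiction. The same mechanism (claim (i) in the paper's proof) is what yields the strong $L^2$ convergence of $|Dg_k|-|Dh_k|$ and of $D(\etaa\circ g_k)-D(\etaa\circ h_k)$ needed for \eqref{e:quasiarm} and \eqref{e:quasiarm_media}. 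You need to add this argument; your existing splicing construction can be reused almost verbatim with $\tilde h_k=h_k$. A smaller omission: the statement requires a single $\D$-minimizing $u$, so after showing each $\zeta^j$ is minimizing you must also argue that the superposition $h_k=\sum_j\a{\tau_{y_k^j}\circ\zeta^j}$ is $\D$-minimizing for large $k$; the paper does this via the $L^\infty$ estimate for $\D$-minimizers and the maximum principle, using that $|y_k^i-y_k^j|\to\infty$.
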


\begin{remark}[Isoperimetric inequality]\label{r:isop} 
If $S\subset \R^{m+n}$ is an integral current of dimension $m-1$ with $\de S=0$, then there
is an $m$-dimensional integral current $R\subset \R^{m+n}$ such that $\partial R = S$ and 
$\mass (R) \leq C \mass (S)^{m/(m-1)}$,
where the constant $C$ is only dimensional (see \cite[Theorem 30.1]{Sim}). It is also well-known that, when
$\supp (S)\subset \Sigma$ and $\Sigma$ is as in Assumption \ref{ipotesi_base} the same
inequality holds for some $\bar{R}$ with $\supp (\bar{R})\subset \Sigma$ and $\partial \bar{R}= S$, with a dimensional constant $C$ which
depends additionally on the constant $c_0$. This can be easily seen as follows: let $\bq: \R^{m+n} \to
\R^{m+\bar{n}}$ be the orthogonal projection and $\Lambda: \R^{m+n} \to \Sigma$ be the map $\Lambda (p) =
(\bq (p), \Psi (\bq (p)))$. $\Lambda$ is a global Lipschitz retraction of $\R^{m+n}$ onto $\Sigma$
which is the identity on $\Sigma$: thus we can simply set $\bar{R} = \Lambda_\sharp R$. 
\end{remark}

\begin{remark}[Taylor expansion of the mass]\label{r:taylor}
There are dimensional constants $c,C>0$ such that the following holds. Let $V\subset\R^m$ be a bounded
measurable set and let $u:V \to \Iq(\R^n)$ be a Lipschitz
function with $\Lip(u) \leq c$.
Denote by $\mathbf{G}_u$ the integer rectifiable current associated to the graph of $u$ as in \cite[Definition~1.10]{DS2}.
Then, the following Taylor expansion
of the mass of $\mathbf{G}_{u}$ holds:
\[
\mass(\bG_u) = Q\,|V| +\int_V\frac{|Du|^2}{2} + \int_V \sum_i R(Du_i) ,
\]
where $R:\R^{n\times m} \to R$ is a $C^1$ function satisfying $|R(D)| = |D|^3\,L(D)$ for some
positive function $L$ such that $L(0)=0$ and $\Lip(L) \leq C$.
This Taylor expansion is proven in \cite[Corollary~3.3]{DS2} (although the
corollary is stated for $V$ open, the proof works obviously when $V$ is merely measurable).
\end{remark}

\begin{remark}\label{r:prime stime}
There exists a dimensional constant $c>0$ such that,
if $E\leq c$, then the $E^\beta$-Lipschitz approximation satisfies the following estimates:
\begin{gather}
\Lip(f) \leq C\, E^\beta,\label{e:banana1}\\
\int_{B_{3s}(x)} |Df|^2 \leq C\, E\,s^m.\label{e:banana2}
\end{gather}
Indeed \eqref{e:banana1} follows from Proposition~\ref{p:max},
Remark~\ref{r:Psi} and
$\|D\Psi\|_0 \leq C (E^{\sfrac{1}{2}} + \bA)\leq C\,E^\beta$ by the choice of $\beta$ and the scaling of $\bA$.
While \eqref{e:banana2} follows from Remark~\ref{r:taylor} since for $E$ sufficiently small
\[
\int_{B_{3s}(x)} \sum_i R(Df_i) \leq C\,E^{2\beta} \int_{B_{3s}(x)}|Df|^2 < \frac14 \int_{B_{3s}(x)}|Df|^2,
\]
and therefore
\begin{align*}
\int_{B_{3s}(x)} |Df|^2 \leq &C\left(\mass(\bG_f\res \bC_{3s} (x)) - Q \, \omega_m \, (3\,s)^m \right)\\
\leq &C\left(\mass(T\res \bC_{3s} (x)) - Q \, \omega_m \, (3\,s)^m \right) +
C\,\mass(\bG_f\res (B_{3s}(x) \setminus K)\times \R^n)\\
\leq &C\, E\,s^m + C\,E^{2\beta}\,|B_{3s}(x) \setminus K| \leq C\,E\,s^m.
\end{align*}
\eqref{e:banana2} is therefore a rather simple corollary of the ``maximal function truncation'' argument employed in Proposition \ref{p:max}. Other approximation schemes give instead worse bounds for the Lipschitz constant of the approximating map, cf. for instance \cite[Theorem 5.1.1]{Sim}.
\end{remark}

\begin{proof}[Proof of Theorem \ref{t:o(E)}]
By rescaling and translating, it is not restrictive to assume that
$x=0$ and $s=1$. Thus, by Remark \ref{r:Psi} we can assume $\Psi (0)=0$,
$\|D\Psi\|_0 \leq C (E^{\sfrac{1}{2}} + \bA)$ and $\|D^2 \Psi\|_0 \leq \bA$.
The proof of \eqref{e:few energy} is by contradiction.
Assume there exist a constant $c_1>0$, a sequence of
currents $(T_k)_{k\in\N}$ satisfying Assumption~\ref{ipotesi_base} and area minimizing,
ambient manifolds $\Sigma_k$ (parametrized by $\Psi_k$, with 
second fundamental forms bounded by $\bA_k$) and
corresponding $E_k^{\beta}$-Lipschitz approximations $(f_k)_{k\in\N}$ 
such that
\begin{equation}\label{e:contradiction}
E_k:=\bE(T_k,\bC_4)\to 0\, ,
\quad \bA_k \leq E_k^{\sfrac{1}{4}+\delta}\quad \text{and}\quad
\int_{B_2\setminus K_k}|Df_k|^2\geq c_1\, E_k,
\end{equation}
where $K_k:= \{x\in B_3 : \bmax\be_{T_k}(x) < E_k^{2\beta}\}$.
Set $\Gamma_k:=\{x\in B_4 : \bmax\be_{T_{k}}(x)\leq 2^{-m}E_k^{2\beta}\}$ and
observe that $\Gamma_k\cap B_3\subset K_k$.
From Proposition \ref{p:max}, it follows that
\begin{equation}\label{e:lip(1)}
\Lip (f_k) \leq C E_k^{\beta},
\end{equation}
\begin{equation}\label{e:lip(2)}
|B_{r} \setminus K_k| \leq C E_k^{-2\beta} \e_T
\bigl(B_{r+r_0 (k)}\setminus \Gamma_k\bigr) \quad \mbox{for every $r\leq 3$}\, ,
\end{equation}
where $r_0 (k)= 16 \, E_k^{(1-2\beta)/m} <\frac{1}{2}$. We also assume 
\begin{equation}\label{e:parametrizzazioni}
\Psi_k (0)=0\qquad\mbox{and}\qquad \|D\Psi_k\|_0 + \|D^2\Psi_k\|_0 \leq C E_k^{\sfrac{1}{4}+\delta}\, .
\end{equation}
Then, \eqref{e:contradiction}, \eqref{e:lip(1)} and \eqref{e:lip(2)} give
\begin{equation*}
c_1\, E_k\leq \int_{B_2\setminus K_k}|Df_k|^2\leq
C\,\e_{T_k}(B_{s}\setminus \Gamma_k)\quad \forall \; s\in\left[\textstyle{\frac{5}{2}},3\right].
\end{equation*}
Setting $c_2:=c_1/(2C)$, we have $2c_2 E_k \leq \be_{T_k} (B_s \setminus \Gamma_k)=
\be_{T_k} (B_s) - \be_{T_k} (B_s \cap \Gamma_k)$, thus leading to
\begin{equation}\label{e:improv}
\e_{T_k}(\Gamma_k\cap B_s)\leq \e_{T_k}(B_s)-2\,c_2\,E_k.
\end{equation}
Next observe that $\omega_m 4^m E_k = \e_{T_k} (B_4) \geq \e_{T_k} (B_s)$. 
Therefore, by the Taylor expansion in Remark~\ref{r:taylor},
\eqref{e:improv}
and $E_k\downarrow 0$, it follows that, for every $s\in\left[5/2,3\right]$,
\begin{align}\label{e:improv2}
\int_{\Gamma_k\cap B_s}\frac{|Df_k|^2}{2} & \leq (1+C\,E_k^{2\beta})\,\e_{T_k}(\Gamma_k\cap B_s)\notag\\
&\leq (1+C\,E_k^{2\beta}) \,\Big(\e_{T_k}(B_s)-2\,c_2\,E_k\Big)
\; \leq \e_{T_k}(B_s)-c_2\,E_k.
\end{align}
Our aim is to show that \eqref{e:improv2} contradicts the minimizing property
of $T_k$.
To construct a competitor we write
$f_k (x) = \sum_i \a{f_k^i (x)} \in \Iq(\R^{\bar n} \times \R^l)$,
and denote by $(f^i_k)' (x)$ the first $\bar{n}$ components of the points $f_k^i (x)$. This induces a map
$f'_k := \sum_i \a{(f^i_k)'}$ taking values into $\Iq (\R^{\bar n})$. Observe that, since $f_k^i (x)$ are indeed
points of the manifold $\Sigma_k$
\begin{equation}\label{e:coordinate_divise}
f_k=\sum_i \a{((f_k^i)' (x), \Psi_k (x, (f_k^i)' (x)))}\, .
\end{equation}
We consider $g_k:={E_k}^{-\sfrac{1}{2}}f'_k$.
Since by Remark~\ref{r:prime stime} $\sup_k \D (g_k, B_3) <\infty$ and $|B_3\setminus \Gamma_k|\to0$,
by Proposition~\ref{p:cc} we can find
a subsequence (not relabelled) of translating sheets $h_k$ satisfying
\eqref{e:cc2} - \eqref{e:cc3} and $\|\cG (g_k, h_k)\|_{L^2 (B_3)}\to 0$.
In particular, we are in the position to apply Proposition~\ref{p:raccordo}
to $g_k$ and $h_k$, with $r_0= \frac52$, $r_1=3$ and $\eta= \frac{c_2}{4}$,
and find
$r\in \left(\frac{5}{2},3\right)$ and competitor functions $H_k$ satisfying
$H_k\vert_{B_3 \setminus B_r} = g_k\vert_{B_3 \setminus B_r}$,
\begin{gather}
\D(H_{k}, B_{r}) \leq  \D(h_{k}, B_{r}) + \frac{c_2}{4},\label{e:guadagno finale}\\
\Lip(H_{k})\leq C^*\, E_k^{\beta-\sfrac12}\label{e:Lipschitz_sottile}\\
\|\cG(H_{k}, g_{k})\|_{L^2(B_{r})} \leq C^*\, \D(g_k, B_r) + C\, \D (H_k, B_r) \leq M <\infty.
\end{gather}
Moreover, Proposition \ref{p:cc} implies that, for $k$ is large enough, 
\begin{equation}\label{e:guadagno_finale2}
\D (h_{k}, B_{r}) \leq \D (g_k, B_{r} \cap \Gamma_k) + \frac{c_2}{4} \stackrel{\eqref{e:improv2}}{\leq} \frac{\be_{T_k} (B_{r})}{E_k} - \frac{3c_2}{4} E_k\, .
\end{equation}
Note that \eqref{e:Lipschitz_sottile} follows from \eqref{e:raccordo1} observing that $E_k^{\beta-\sfrac12}\uparrow \infty$: 
thus $C^*$ depends on $c_2$ and the two chosen sequences, but not on $k$. From now on, although
this and similar constants are not dimensional, we will keep denoting them by $C$, with the understanding that they do not depend on $k$.
Note that, from \eqref{e:lip(1)} and \eqref{e:lip(2)}, one gets
\begin{align}\label{e:diff mass}
\|T_k-\mathbf{G}_{f_k}\|(\bC_{3})&\leq
\| T_k\| ((B_{3}\setminus K_k)\times\R^{n} )
+\|\mathbf{G}_{f_k}\| ((B_{3}\setminus K_k)\times\R^{n})\notag\\
&\leq Q\,|B_{3}\setminus K_k|+ E_k+ Q\, |B_3\setminus K_k|+ 
C\,|B_{3}\setminus K_k|\, \Lip(f_k)\notag\\
&\leq  E_k+ C\,E_k^{1-2\beta}\leq C\,E_k^{1-2\beta}.
\end{align}
Let $(z,y)$ be coordinates on $\R^m\times \R^n$ and
consider the function $\ph (z,y) = |z|$ and the slice $\la T_k-\mathbf{G}_{f_k}, \ph,r\ra$. Observe that, by the coarea
formula and Fatou's Lemma, 
\[
\int_r^3 \liminf _kE_k^{2\beta -1} \mass (\la T_k-\mathbf{G}_{f_k}, \ph,s\ra) \, ds \leq  \liminf_k E_k^{2\beta-1} \|T_k - \mathbf{G}_{f_k}\| (\bC_3)
\leq C\, .
\]
Therefore, for some $\bar{r} \in (r,3)$ and a subsequence, not relabeled, 
$\mass\big(\la T_k-\mathbf{G}_{f_k}, \ph, \bar{r} \ra\big)\leq C\,E_k^{1-2\beta}$.

Let now $v_k:={E_k}^{\sfrac{1}{2}}\, H_k\vert_{B_{\bar r}}$,
$u_k := (v_k, \Psi_k (x,v_k))$ 
and consider the current
$Z_k := \mathbf{G}_ {u_k}\res \bC_{\bar r}$.
Since $u_k|_{\de B_{\bar r}} = f_k|_{\de B_{\bar r}}$, one gets
$\partial Z_k = \la \mathbf{G}_{f_k},\ph,{\bar r}\ra$ and, hence,
$\mass (\partial (T_k\res \bC_{\bar r} - Z_k))\leq C E_k^{1-2\beta}$. 
We define
\begin{equation}\label{e:competitor_current}
S_k = T_k \res (\bC_4\setminus \bC_{\bar r}) + Z_k + R_k\, .
\end{equation}
where (cp.~Remark \ref{r:isop}) $R_k$ is an integral current supported in $\Sigma_k$ such that 
\[
\partial R_k= \partial (T_k\res \bC_{\bar r} - Z_k)\quad\text{and}\quad\mass (R_k) \leq C
E_k^{\frac{(1-2\beta)m}{m-1}}.
\]
$S_k$ is supported in $\Sigma_k$ and 
$\partial S_k = \partial (T_k\res \bC_4)$.
We now show that, since $\beta<\frac{1}{2m}$, for
$k$ large enough, the mass of $S_k$ is smaller than that of $T_k$.
To this aim we write 
\begin{align*}
& \D (u_k, B_{\bar r}) - \D (f_k , B_{\bar r} \cap \Gamma_k) =
\underbrace{\int_{B_{\bar r}} |D v_k |^2- \int_{B_{\bar r}\cap \Gamma_k} |Df'_k|^2}_{I_1}\nonumber\\
&\quad 
+ \underbrace{\int_{B_{\bar r}} |D (\Psi_k (x, v_k))|^2 - \int_{B_{\bar r}} |D (\Psi_k (x, f'_k))|^2}_{I_2}\, +\underbrace{\int_{B_{\bar r}\setminus \Gamma_k} |D(\Psi_k (x, f'_k))|^2}_{I_3}\notag .
\end{align*}
The first term is estimated by \eqref{e:guadagno finale} and \eqref{e:cc2}: recalling that $v_k =  E_k^{\sfrac{1}{2}} H_k$ and $f'_k = E_k^{\sfrac{1}{2}} g_k$ (but also that the two functions coincide on $B_{\bar r}\setminus B_r$) we achieve $I_1 \leq \frac{c_2}{2} E_k$ for $k$ large enough.
For what concerns the second, we proceed as follows. First we write 
\[
I_2 = \sum_i\int_{B_{\bar r}} (D(\Psi_k(x,u_k(x))_i-D(\Psi_k(x,f'_k(x))_i):
(D(\Psi_k(x,u_k(x))_i+D(\Psi_k(x,f'_k(x))_i) .\notag
\]
Next, recalling the chain rule \cite[Proposition~1.12]{DS1}, we get
\begin{align}
\big|D(\Psi_k (x,u_k(x))_i + D(\Psi_k (x,f'_k(x))_i\big|&\leq C \|D_x\Psi_k\|_0 + C \|D_u \Psi_k\|_0 (\Lip (u_k) + \Lip (f'_k))\nonumber\\
&\stackrel{\eqref{e:parametrizzazioni}}{\leq} C E_k^{\sfrac{1}{4}+\delta}\, .\notag
\end{align}
Using the letter inequality, the chain rule and \eqref{e:parametrizzazioni}, once again we achieve
\begin{align}
I_2 \leq & C\, E_k^{\sfrac14+\delta} \int_{B_{\bar r}} \Big(\sum_i |D_x\Psi_k (x, v_k^i(x)) - D_x\Psi_k (x, (f^i_k)'(x))|\notag\\
& \qquad \qquad\qquad + \|D_u \Psi_k\|_0 \left(|Dv_k| + |D f'_k|\right)\Big)\notag\\
\leq{}& C\, E_k^{\sfrac{1}{4} + \delta} \|D^2\Psi_k\|_0 \int_{B_{\bar r}} \cG(v_k,f'_k) + C\,E_k^{\sfrac{1}{2}+2\delta} \int_{B_{\bar r}} \left(|Dv_k| + |Df'_k|\right)\notag\\
\leq{}& C\, E_k^{\sfrac{1}{2}+2\delta} \, E_k^{\sfrac{1}{2}} + C\,E_k^{1+2\delta} \leq C E_k^{1+2\delta}\, .
\end{align}
Finally, $I_3\leq C \|D\Psi_k\|_\infty^2 |B_3\setminus \Gamma_k| \leq C E_k^{1+\beta}$.
Thus, for $k$ large enough we achieve $I_2+I_3 \leq \frac{c_2}{4} E_k$, thereby reaching $\D (u_k,B_{\bar r})
- \D (f_k,B_{\bar r}\cap \Gamma_k) \leq \frac{3c_2}{4} E_k$.
Hence,
\begin{align}
\mass (S_k) - \mass (T_k)
\leq{}&
\mass(Z_k) + C\, \mass(R_k) - \mass(T_k \res \bC_{\bar r})\nonumber\\
\leq {}& Q\,|B_{\bar r}|+\int_{B_{\bar r}}\frac{|Du_k|^2}{2}+ C\,E_k^{1+2\,\beta} +
C\,E_k^{\frac{(1-2\,\beta)m}{m-1}} - Q |B_{\bar r}| - \e_{T_k} (B_{\bar r})\allowdisplaybreaks\nonumber\\
\leq{}& \int_{B_{\bar r}\cap {\Gamma_k}}\frac{|Df_k|^2}{2} +
\frac{3}{4} \, c_2\, E_k + C\,E_k^{1+2\,\beta}+
C\,E_k^{\frac{(1-2\,\beta)m}{m-1}} - \e_{T_k} (B_{\bar r})\allowdisplaybreaks\nonumber\\
\stackrel{\eqref{e:improv2}}{\leq}& 
-\frac{c_2\, E_k}{4}
+ C\,E_k^{1+\beta}+ C\,E_k^{\frac{(1-2\,\beta)m}{m-1}}<0, \label{e:argomento_chiave}
\end{align}
as soon as $E_k$ is small enough.
This gives the desired contradiction and proves \eqref{e:few energy}.

\medskip

For what concerns \eqref{e:quasiarm} and \eqref{e:quasiarm_media}, we argue similarly. Without loss of generality we assume $x=0$ and $s=1$. Hence, we
let $(T_k)_k$, $(\Sigma_k)_k$ and $(\Psi_k)_k$ be sequences with vanishing $E_k:=\bE (T_k, \bC_4)$ and satisfying \eqref{e:parametrizzazioni}, but contradicting \eqref{e:quasiarm} or \eqref{e:quasiarm_media}. So, being $f_k$ the $E_k^\beta$-Lipschitz approximations,
we know that, for any sequence of $\D$-minimizing functions $\bar u_k$ which we might choose, when we set 
$w_k = (\bar u_k, \Psi_k (x, \bar u_k))$ we will have
\begin{equation}\label{e:contra_harm}
\liminf_k \underbrace{E_k^{-1} \int_{B_2} \big(\cG ( f_k, w_k)^2 +  (|D f_k| - |D w_k|)^2 + |D (\etaa\circ f_k - \etaa\circ w_k)|^2\big)}_{=:I(k)} > 0\, .
\end{equation}
As in the previous argument we introduce the maps $f'_k$ satisfying \eqref{e:coordinate_divise}, the normalized functions $g_k = E_k^{-\sfrac{1}{2}} f'_k$ and, after extraction of a subsequence, the translating sheets $h_k$ satisfying
\eqref{e:cc2} - \eqref{e:cc3} and $\|\cG (g_k, h_k)\|_{L^2 (B_3)}\to 0$. We next claim that
\begin{itemize}
\item[(i)] $\lim_k \int_{B_2} |Dg_k|^2 = \int_{B_2} |Dh_{k_0}|^2$, for any $k_0$
(recall that $\int_{B_2} |Dh_{k}|^2$ is constant);
\item[(ii)] $h_k$ is $\D$-minimizing in $B_2$.
\end{itemize}
If (i) is false, then there is a positive constant $c_2$ such that, for any $r\in [5/2,3]$, 
\begin{equation}\label{e:sostituto}
\int_{B_r} \frac{|Dh_k|^2}{2} \leq \int_{B_r} \frac{|Dg_k|^2}{2} - c_2\leq \frac{\e_{T_k} (B_r)}{E_k}-\frac{c_2}{2},
\end{equation}
provided $k$ large enough (where the last inequality is again an effect of the Taylor expansion
of Remark~\ref{r:taylor}).
We next define the competitor currents $S_k$ as in the argument leading to \eqref{e:argomento_chiave}:
this latter inequality is reached thanks to \eqref{e:sostituto}, which substitutes \eqref{e:improv2} and \eqref{e:guadagno_finale2}. On the other hand
\eqref{e:argomento_chiave} contradicts the minimizing property of $T_k$.
If (ii) is false, then $h_k$ is not $\D$-minimizing in $B_{2}$.
This implies that one of the $\zeta^j$ in the translating sheets $h_k$
is not $\D$-minimizing in $B_{2}$.
Indeed, in the opposite case, by \cite[Theorem 3.9]{DS1}, $\|\cG (\zeta^j, Q\a{0})\|_{C^0 (B_{2})} <\infty$ and, since
$h_k = \sum_i \llbracket \tau_{y_k^i} \circ \zeta^i\rrbracket$ and
$|y_k^i - y^j_k|\to \infty$ for $i\neq j$, by the maximum principle of \cite[Proposition 3.5]{DS1}, $h_k$ would be $\D$-minimizing.
Thus, for some $\zeta^j$ we can find a competitor $\hat \zeta^j$
with less energy in the ball $B_2$. 
So the functions $F_k = \sum_j \llbracket\tau_{y^j_k} \circ \hat{\zeta}^j\rrbracket$ satisfy, for any $r\in [5/2,3]$,
\begin{equation}\label{e:sostituto2}
\int_{B_r} \frac{|DF_k|^2}{2} \leq 
\int_{B_r}\frac{ |Dh_k|^2}{2} -c_2 = \lim_k \int_{B_r} \frac{|Dg_k|^2}{2} -\,c_2\leq
\frac{\e_T(B_r)}{E_k}-\frac{c_2}{2}\, 
\end{equation}
provided $k$ is large enough (here $c_2>0$ is some constant independent of $r$ and $k$). On the other hand $F_k = h_k$ on $B_3\setminus B_{5/2}$
and therefore $\|\cG (F_k, g_k)\|_{L^2 (B_3\setminus B_{5/2})} \to 0$.
We then construct the competitor current $S_k$ of \eqref{e:competitor_current}: this time we use, however, the map $F_k$ in place of $h_k$ to construct $H_k$ via Proposition \ref{p:raccordo} and we reach the contradiction \eqref{e:argomento_chiave} using 
\eqref{e:sostituto2} in place of \eqref{e:improv2} and \eqref{e:guadagno_finale2}. 

We next set $\bar u_k := E_k^{\sfrac{1}{2}} h_k$ and we aim at showing that, for $w_k = (\bar u_k, \Psi_k (x, \bar u_k))$, $I(k)\to 0$, 
a contradiction to \eqref{e:contra_harm}.
Observe first that, by $\|\cG (g_k, h_k)\|_{L^2} \to 0$, we have $D (\xii\circ g_k) - D (\xii\circ h_k) \weak 0$ in $L^2$
(recall the definition
of $\xii$ in Section \ref{ss:xii}). On the other hand, recall that
$D (\xii \circ h_k)$ is actually a single function, independent of $k$, because $h_k$ is a sequence of translating sheets. So, (i)
and the identities $|D (\xii \circ g_k)|= |Dg_k|$, $|D (\xii\circ h_k)| = |D  h_k|$ imply that $D (\xii\circ g_k) - D (\xii\circ h_k)$ converge
strongly to $0$ in $L^2$. If we next set $\hat{h}_k = \sum_i \a{h_k^i - \etaa\circ h_k}$ and $\hat{g}_k = \sum_i \a{g_k^i - \etaa\circ g_k}$,
we obviously have $\|\cG (\hat{h}_k, \hat{g}_k)\|_{L^2} + \|\etaa \circ h_k - \etaa \circ g_k\|_{L^2} \to 0$. Recall however that the Dirichlet
energy enjoys the splitting 
\[
\D (g_k) = Q \int |D (\etaa \circ g_k)|^2 + \D (\hat{g}_k)
\qquad \D (h_k) = Q\int |D (\etaa \circ h_k)|^2 + \D (\hat{h}_k)\, .
\]
So (i) implies that the Dirichlet energies of $\etaa\circ g_k$ and $\hat{g}_k$ converge, respectively, to those of 
$\etaa\circ h_k$ and $\hat{h}_k$ (which, we recall again, are independent of $k$ because the $h_k$'s are translating sheets).
We thus infer that $D (\etaa \circ h_k) - D (\etaa \circ g_k)$ converges to $0$ strongly in $L^2$.

Coming back to $w_k$ we observe that
\begin{equation}\label{e:vanishes1}
E_k^{-1} \int_{B_2} \cG (w_k, f_k)^2 \leq  (2+ \Lip (D\Psi)^2) E_k^{-1} \int_{B_2} \cG (\bar u_k, f_k')^2
= C \int_{B_2} \cG (h_k, g_k)^2 \to 0\, .
\end{equation}
So,
\begin{align}
\limsup_k I(k) \leq &
2\limsup_k \int_{B_2} \big( |Dg_k| - |Dh_k|)^2 + |D (\etaa \circ g_k - \etaa \circ h_k)|^2\big)\nonumber\\
& + C(Q) \limsup_k E_k^{-1} \int_{B_2} \cG (D (\Psi(x, f'_k)), D (\Psi (x, \bar u_k)))^2\nonumber\\
\leq &C \limsup_k  E_k^{-1} \int_{B_2} \cG (D (\Psi(x, f'_k)), D (\Psi (x, \bar u_k)))^2 = \limsup_k E_k^{-1} J(k)\, .\label{e:vanishes2}
\end{align}
Recalling the chain rule of \cite[Proposition 1.12]{DS1}, we have
\begin{align*}
& D (\Psi (x, f'_k)) (x) = \sum_i \a{D_x \Psi (x, (f^i_k)' (x)) + D_v \Psi (x, (f^i_k)' (x)) \cdot D(f^i_k)' (x)}\nonumber\\
& D (\Psi (x, \bar u_k)) (x) = \sum_i \a{D_x \Psi (x, \bar u_k^i (x)) + D_v \Psi (x, \bar u_k^i (x)) \cdot D\bar u_k^i (x)}\, .
\end{align*}
So we can estimate 
\begin{equation*}\label{e:vanishing3}
J (k) \leq C \Lip (D_x \Psi)^2 \int_{B_2} \cG (f'_k, \bar u_k)^2 + C \|D\Psi\|_0^2 \int_{B_2} (|Df'_k|^2 + |D\bar u_k|^2)
\stackrel{\eqref{e:parametrizzazioni}}{\leq} C E_k^{\sfrac{3}{2} + 2\delta}\, .
\end{equation*}
We therefore conclude that $E_k^{-1} J (k)\to 0$ and thus $I(k) \to 0$, which contradicts \eqref{e:contra_harm}.
\end{proof}

\section{Gradient $L^p$ estimate}\label{s:Lp-estimate}
In this section we prove Theorem~\ref{t:higher1}.
The result is a consequence of an higher integrability estimate for the gradient
of Dir-minimizing functions,
the $o(E)$-improved estimate for the excess measure given in Proposition~\ref{p:o(E)}
and a very careful ``covering and stopping radius'' argument 
(cf.~\cite{Sp12} for an exposition in a more elementary context).

\subsection{Higher integrability of the gradient of $\D$-minimizers}\label{s:higher}
Most of the energy of a $\D$-minimizer lies where the gradient is relatively small.
We prove indeed the following a priori estimate 
(cf.~\cite{Sp10} for a different proof and some improvements).

\begin{theorem}[Higher integrability of $\D$-minimizers]
\label{t:hig fct}
There exists $p_{10}>2$ such that, for every 
$\Omega'\subset\subset\Omega \subset\R^{m}$ open domains,
there is a constant $C>0$ such that
\begin{equation}\label{e:hig fct}
\norm{Du}{L^{p_{10}} (\Omega')}\leq C\,\norm{Du}{L^2(\Omega)}\quad \text{for every $\D$-minimizing }\, u\in
W^{1,2}(\Omega,\Iqs).
\end{equation}
\end{theorem}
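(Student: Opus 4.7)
The natural approach is to derive a reverse Hölder inequality for $|Du|^2$ and then invoke Gehring's lemma to gain higher integrability. Specifically, I will prove that for every ball $B_r(x_0) \subset\subset \Omega$,
\begin{equation*}
\fint_{B_{r/2}(x_0)} |Du|^2 \leq C \left(\fint_{B_r(x_0)} |Du|^{2_*}\right)^{2/2_*},
\end{equation*}
where $2_* = \tfrac{2m}{m+2} < 2$, from which Gehring's lemma (in the standard form of Giaquinta--Modica) yields an exponent $p_{10} > 2$ and the desired bound $\|Du\|_{L^{p_{10}}(\Omega')} \leq C \|Du\|_{L^2(\Omega)}$.

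The first step is a Caccioppoli-type inequality: for $u$ Dir-minimizing in $B_r(x_0)$ and for a suitable $T_0 \in \Iqs$,
\begin{equation*}
\int_{B_{r/2}(x_0)} |Du|^2 \leq \frac{C}{r^2} \int_{B_r(x_0)} \cG(u, T_0)^2.
\end{equation*}
This is obtained by testing the minimality of $u$ against a competitor built via Lemma~\ref{l:interpolation}: at a radius $\rho \in (r/2, r)$ one replaces $u|_{B_\rho}$ by a map that equals the constant $T_0$ on $B_{r/2}$ and interpolates between $Q\a{T_0}$ and $u|_{\partial B_\rho}$ on the intermediate annulus. Choosing $\rho$ via a Fubini/slicing argument so that $\int_{\partial B_\rho} |D_\tau u|^2$ is controlled by its radial average $\frac{1}{r}\int_{B_r \setminus B_{r/2}} |Du|^2$, and using \eqref{e:raccordo_Dir} with $\eps \sim r$, produces a competitor whose Dirichlet energy absorbs the annular gradient term into the left-hand side, leaving the claimed estimate.

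The second step is the $Q$-valued Sobolev--Poincaré inequality: there exists $T_0 \in \Iqs$ with
\begin{equation*}
\left(\int_{B_r} \cG(u, T_0)^2\right)^{1/2} \leq C\, r \left(\int_{B_r} |Du|^{2_*}\right)^{1/2_*}.
\end{equation*}
This follows by applying the classical Euclidean Sobolev--Poincaré inequality (with exponent $2_*$, whose Sobolev conjugate is exactly $2$) to the map $\xii \circ u \in W^{1,2_*}(B_r, \R^{N(Q,n)})$, using $|D(\xii\circ u)| = |Du|$ a.e.~by property (iii) of $\xii$, and then converting back to the metric $\cG$ via the bi-Lipschitz equivalence of $\xii$ on $\cQ$ together with a nearest-point-to-$\cQ$ projection of the Euclidean average.

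Combining the two steps and normalizing to integral averages yields the reverse Hölder inequality above, at which point Gehring's lemma concludes. The delicate part I expect is the Caccioppoli step: pointwise interpolation between $Q$-valued maps is not an algebraic operation, so one must rely entirely on the construction of Lemma~\ref{l:interpolation} and perform the choice of $\rho$ carefully, so that the boundary terms $\eps \int_{\partial B_\rho}(|D_\tau u|^2 + |D_\tau T_0|^2)$ and $\eps^{-1}\int_{\partial B_\rho}\cG(u,T_0)^2$ appearing in \eqref{e:raccordo_Dir} are both absorbed, rather than picked up as irreducible boundary contributions.
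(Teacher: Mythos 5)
Your argument is correct in outline, but it follows a genuinely different route from the paper's. The paper proves the reverse H\"older inequality (Proposition~\ref{p:est2p}, with any exponent $p_{11}\in(\frac{2(m-1)}{m},2)$) by choosing a good sphere $\partial B_s$ via Fubini, using the embedding $W^{1,p_{11}}(\partial B_s)\hookrightarrow H^{1/2}(\partial B_s)$, taking the harmonic extension of $\xii\circ u|_{\partial B_s}$, retracting it onto $\cQ$ with the Lipschitz retraction $\ro$ of \cite[Theorem 2.1]{DS1}, and comparing by minimality; this gives the interior $L^2$ bound directly, with no Caccioppoli inequality and no iteration over radii. Your route --- a Caccioppoli inequality obtained from the constant competitor built through Lemma~\ref{l:interpolation}, combined with the $Q$-valued Sobolev--Poincar\'e inequality with exponent $2_*$ via the embedding $\xii$ --- is essentially the ``usual Caccioppoli inequality for quasi minima'' alternative that the authors themselves point out right after Proposition~\ref{p:est2p} (cf.~\cite[Theorem 6.5]{Giusti}), and it works; it even gives the smaller exponent $2_*\leq\frac{2(m-1)}{m}$ on the right-hand side, though this makes no difference for Gehring. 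What each approach buys: the paper's avoids absorption arguments at the price of trace/harmonic-extension theory, yours uses only the bi-Lipschitz embedding and classical elliptic machinery but needs the standard hole-filling device. Two details you should tighten. First, in the Caccioppoli step the term $\eps\int_{\partial B_\rho}|D_\tau u|^2\sim\theta\int_{B_r\setminus B_{r/2}}|Du|^2$ lives on the annulus, so it cannot literally be ``absorbed into the left-hand side'': you must hole-fill (add the missing $\int_{B_{r/2}}|Du|^2$ to both sides, obtaining a coefficient $\theta'<1$ in front of $\int_{B_r}|Du|^2$) and then invoke the standard iteration lemma over nested radii to discard that term. Second, your Sobolev--Poincar\'e inequality as written carries a spurious factor $r$: in non-averaged form the scale-invariant statement is $\left(\int_{B_r}\cG(u,T_0)^2\right)^{1/2}\leq C\left(\int_{B_r}|Du|^{2_*}\right)^{1/2_*}$ (the factor $r$ belongs to the mean-value formulation), as a linear single-valued $u$ shows; with this correction the two steps combine exactly into the displayed reverse H\"older inequality, and the Gehring-type lemma of Giaquinta--Modica concludes as you indicate.
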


\begin{proof} The statement is a corollary of Proposition~\ref{p:est2p} below and a Gehring type lemma, cf.~\cite[Proposition 5.1]{GiMo}.
\end{proof}

\begin{proposition}\label{p:est2p}
Let $\frac{2\,(m-1)}{m}<p_{11} <2$.
Then, there exists $C=C(m,n,Q,p_{11})$ such that, for every $u:\Om\to\Iq$ $\D$-minimizing, the following holds
\begin{equation*}%\label{e:est2p}
\left(\mint_{B_r(x)}|Du|^2\right)^{\sfrac{1}{2}}\leq C\left(\mint_{B_{2r}(x)}|Du|^{p_{11}}\right)^{\sfrac{1}{p_{11}}}
\quad\forall\;x\in\Om,\;\forall\;r<\min\big\{1,\dist(x,\de\Om)/2\big\}.
\end{equation*}
\end{proposition}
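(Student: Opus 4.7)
The strategy is a classical Caccioppoli-plus-Sobolev-Poincar\'e argument, tailored to $Q$-valued maps via the interpolation Lemma~\ref{l:interpolation} and the Sobolev-Poincar\'e inequality for $Q$-valued maps from \cite[Propositions~2.11-2.12]{DS1}. By translation and the scaling $u\mapsto u(r\cdot)$, one may assume $x=0$ and $r=1$, so the goal becomes
\begin{equation*}
\left(\mint_{B_1}|Du|^2\right)^{\sfrac{1}{2}}\leq C\left(\mint_{B_2}|Du|^{p_{11}}\right)^{\sfrac{1}{p_{11}}}.
\end{equation*}

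The first and main step is to derive a Caccioppoli-type bound of the form
\begin{equation*}
\int_{B_1}|Du|^2\;\leq\;\theta\int_{B_2}|Du|^2 + C\int_{B_2}\cG(u,\bar u)^2\, ,\qquad \theta<1,
\end{equation*}
where $\bar u\in\Iqs$ is a $Q$-point chosen as a ``Poincar\'e mean'' of $u$ on $B_2$. By Fubini, I would pick $\rho\in(1,2)$ with $u|_{\partial B_\rho}\in W^{1,2}(\partial B_\rho,\Iqs)$ and with both $\int_{\partial B_\rho}|D_\tau u|^2$ and $\int_{\partial B_\rho}\cG(u,\bar u)^2$ controlled by the corresponding volume integrals over $B_2\setminus B_1$. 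Apply Lemma~\ref{l:interpolation} with interior map $f:=\bar u$ (the constant $Q$-point) and boundary map $g:=u|_{\partial B_\rho}$ to produce a competitor $h\in W^{1,2}(B_\rho,\Iqs)$ with $h|_{\partial B_\rho}=u|_{\partial B_\rho}$ and
\begin{equation*}
\int_{B_\rho}|Dh|^2\leq \eps\int_{\partial B_\rho}|D_\tau u|^2 + \frac{C_0}{\eps}\int_{\partial B_\rho}\cG(u,\bar u)^2\, .
\end{equation*}
$\D$-minimality of $u$ in $B_\rho$ together with the Fubini estimates then yields $\int_{B_1}|Du|^2 \leq C\eps\int_{B_2}|Du|^2 + \tfrac{C}{\eps}\int_{B_2}\cG(u,\bar u)^2$, and a standard hole-filling absorption (adding $\eps\int_{B_1}|Du|^2$ to both sides and taking $\eps$ small) gives the displayed Caccioppoli-type inequality.

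Next, for the critical exponent $p_0:=2m/(m+2)$ (whose Sobolev conjugate equals $2$), the Sobolev-Poincar\'e inequality for $Q$-valued maps provides
\begin{equation*}
\int_{B_2}\cG(u,\bar u)^2\leq C\left(\int_{B_2}|Du|^{p_0}\right)^{\sfrac{2}{p_0}}\, .
\end{equation*}
Inserting this into the Caccioppoli-type inequality (valid at every dyadic subball by the same proof) and invoking a Gehring/Giaquinta-Modica self-improvement, as in \cite[Proposition~5.1]{GiMo}, removes the $\theta\int_{B_2}|Du|^2$ term and yields the reverse H\"older estimate with exponent $p_0$. Since $p_0=2m/(m+2)\leq 2(m-1)/m<p_{11}$ for $m\geq 2$, the nesting of normalized $L^p$ norms (Jensen's inequality) upgrades the right-hand side from $L^{p_0}$ to $L^{p_{11}}$, concluding the proof.

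The main obstacle is the Caccioppoli step. Unlike the scalar harmonic case, there is no Euler-Lagrange equation to test against a cutoff, so competitors must be produced through Lemma~\ref{l:interpolation}, which yields only a \emph{hybrid} bound featuring an $\eps\int_{\partial B_\rho}|D_\tau u|^2$ boundary term at the ``wrong'' exponent $2$. Converting this boundary term into an absorbable volume term via Fubini is what forces the hole-filling structure, and passing from the resulting hybrid inequality to the clean reverse H\"older is in turn what forces the Gehring-type self-improvement. A subsidiary delicate point is the choice of the $Q$-point $\bar u$: it must simultaneously be a Sobolev-Poincar\'e mean (hence in general \emph{not} of the form $Q\a{c}$ with a single $c\in\R^n$) and a legal interior datum in Lemma~\ref{l:interpolation}, which fortunately accepts any $W^{1,2}$ map as $f$.
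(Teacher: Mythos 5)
Your overall route is genuinely different from the paper's and is essentially viable: a Caccioppoli-type inequality obtained by feeding the constant competitor $\bar u\in\Iqs$ and the boundary datum $u|_{\de B_\rho}$ into Lemma~\ref{l:interpolation} (which indeed accepts any $W^{1,2}$ interior map, so the $Q$-point Poincar\'e mean is legal), followed by the $Q$-valued Sobolev--Poincar\'e inequality at the critical exponent $p_0=2m/(m+2)$ and Jensen. The one step that does not work as written is the final self-improvement. The inequality you arrive at, $\mint_{B_r}g^{q}\leq b\big(\mint_{B_{2r}}g\big)^{q}+\theta\,\mint_{B_{2r}}g^{q}$ with $g=|Du|^{p_0}$ and $q=2/p_0$, is exactly the \emph{hypothesis} of \cite[Proposition~5.1]{GiMo}; the conclusion of that lemma raises the integrability exponent on the left (i.e.\ it yields the statement of Theorem~\ref{t:hig fct}), but it neither deletes the $\theta$-term at fixed exponents nor lowers the exponent on the right-hand side, so it cannot produce the reverse H\"older estimate of Proposition~\ref{p:est2p}. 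Invoking Gehring here would also be structurally circular, since in the paper Gehring is precisely the tool used afterwards to deduce Theorem~\ref{t:hig fct} from Proposition~\ref{p:est2p}.

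The repair stays entirely within your scheme: run the competitor argument between two arbitrary radii $1\leq t<s\leq 2$, choosing the good sphere $\rho\in(t,s)$ by Fubini (so the boundary integrals carry a factor $(s-t)^{-1}$) and taking the interpolation parameter of order $\lambda(s-t)$; this gives $\int_{B_t}|Du|^2\leq\theta\int_{B_s}|Du|^2+C(s-t)^{-2}\int_{B_2}\cG(u,\bar u)^2$ with a fixed $\theta<1$, and the standard absorption lemma for functions satisfying $f(t)\leq\theta f(s)+A(s-t)^{-2}$ (the usual tool in Caccioppoli inequalities for quasi-minima, cf.~\cite{Giusti}) removes the energy term altogether, yielding $\int_{B_1}|Du|^2\leq C\int_{B_2}\cG(u,\bar u)^2$. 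Sobolev--Poincar\'e and Jensen then give the claim, in fact down to the exponent $p_0\leq 2(m-1)/m$. By contrast, the paper's proof needs no Caccioppoli and no iteration: it selects a good sphere $\de B_s$ by Fubini, uses the trace embedding $W^{1,p_{11}}(\de B_s)\hookrightarrow H^{1/2}(\de B_s)$ --- which is exactly where the restriction $p_{11}>2(m-1)/m$ enters --- takes the harmonic extension $\hat F$ of $\xii\circ u|_{\de B_s}$, and compares $u$ with the retracted competitor $\xii^{-1}\circ\ro\circ\hat F$ in one stroke; your route trades that fractional trace-space input for the interpolation lemma plus an absorption argument.
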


\begin{proof} Since the estimate is invariant under translations
and rescalings, it is enough to prove it for $x=0$ and $r=1$.  We
assume, therefore $\Omega= B_2$.
Let $u:\Om\to \Iqs$ be $\D$-minimizing and let $F=\xii\circ u:\Om\to \cQ\subset\R^{N}$. Denote by $\bar F\in \R^{N}$ the average
of $F$ on $B_2$.
By Fubini's theorem and the Poincar\'e inequality, there exists $s\in[1,2]$ such
that
\begin{equation*}%\label{e:fubini}
\int_{\de B_s}\left(|F-\bar F|^{p_{11}}+|DF|^{p_{11}}\right)
\leq C \int_{B_2}\left(|F-\bar F|^{p_{11}}+|DF|^{p_{11}}\right)
\leq C \|DF\|^{p_{11}}_{L^{p_{11}} (B_2)}.
\end{equation*}
Consider $F\vert_{\de B_s}$.
Since $\frac{1}{2}>\frac{1}{p_{11}}-\frac{1}{2\,(m-1)}$, 
we can use the embedding
$W^{1,{p_{11}}}(\de B_s)\hookrightarrow H^{1/2}(\de B_s)$
(see, for example, \cite{Ada}).
Hence, we infer that
\begin{equation}\label{e:HW}
\norm{F-\bar F}{H^{1/2}(\de B_s)}\;\leq\;
C\,\norm{DF}{L^{p_{11}} (B_2)}.
\end{equation}
Let $\hat F$ be the harmonic extension of $F\vert_{\de B_s}$ in $B_s$.
It is well known (one could, for example, use the result
in \cite{Ada} on the half-space together with a partition of unity) that
\begin{equation}\label{e:harm ext}
\|D\hat F\|_{L^2 (B_s)} \leq C(m)\,\min_{p\in\mathbb R^N} \|\hat{F}-p\|_{H^{1/2} (\partial B_s)} \stackrel{\eqref{e:HW}}{\leq}
C\norm{D F}{L^{p_{11}} (B_2)}\, .
\end{equation}
Consider the map $\ro$ of \cite[Theorem 2.1]{DS1}. Since $\ro\circ \hat F\vert_{\de B_s}
= u\vert_{\de B_s}$ and $\ro\circ \hat F$ takes values in $\cQ$,
by the minimizing property of $u$
and the Lipschitz continuity of $\xii$, $\xii^{-1}$ and $\ro$, we conclude:
\begin{equation*}
\left(\int_{B_1}|Du|^2\right)^{\sfrac{1}{2}}\leq
C\,\left(\int_{B_s}|D\hat 
F|^2\right)^{\sfrac{1}{2}}\leq
C\,\left(\int_{B_2} |DF|^{p_{11}}\right)^{\sfrac{1}{p_{11}}}
= C \left(\int_{B_2} |Du|^{p_{11}}\right)^{\sfrac{1}{p_{11}}}.\qedhere
\end{equation*}
\end{proof}

\begin{remark} Proposition~\ref{p:est2p} can be proved in several different ways, which are based on more common test function
arguments: cf.~the intrinsic proof (i.e.~which does not use the biLipschitz embedding $\xii$)
in \cite{Sp10} or the usual Caccioppoli's inequality for
quasi minima \cite[Theorem~6.5]{Giusti}.
\end{remark}

\subsection{Improved excess estimate}\label{ss:weak alm}
The higher integrability of the Dir-minimizing functions and the harmonic
approximation lead to the following estimate, which we call ``weak''
since we will improve it in Theorem \ref{t:higher}.

\begin{proposition}[Weak excess estimate]\label{p:o(E)}
For every $\eta_{10}>0$, there exists $\eps_{13} >0$ with the following property.
Let $T$ be area minimizing and assume it satisfies Assumption \ref{ipotesi_base} in $\bC_{4s} (x)$.
If $E =\bE(T,\bC_{4s}(x))\leq \eps_{13}$,
%and $c_0\leq \eps_{13}$, where $c_0$ is the constant in \eqref{e:Sigma},
then
\begin{gather}\label{e:o(E)1}
\e_T(A)\leq \eta_{10}\, E\,s^m
+ C \,\bA^{2}\,s^{m + 2},
\end{gather}
for every $A\subset B_{s}(x)$ Borel with $|A|\leq \eps_{13}|B_{s}(x)|$.
\end{proposition}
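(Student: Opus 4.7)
The plan is to combine the harmonic approximation of Theorem~\ref{t:o(E)} with the higher integrability of $\D$-minimizers (Theorem~\ref{t:hig fct}). By rescaling I may assume $x=0$ and $s=1$. First dispose of the case $\bA > E^{3/8}$: then $\e_T(A) \leq \e_T(B_4) \leq 4^m\omega_m E \leq C\bA^2$ provided $\eps_{13}$ is small, since $E^{3/4}\geq E$ for $E\leq 1$. In the remaining regime $\bA \leq E^{3/8}$, Theorem~\ref{t:o(E)} applies with $\delta = 1/8$, so I fix $\beta \in (0, 1/(2m))$ and a parameter $\eta_1 > 0$ to be chosen, obtaining the $E^\beta$-Lipschitz approximation $f$ with contact set $K\subset B_1$ (where $T = \bG_f$) and a $\D$-minimizer $u : B_2 \to \Iq(\R^{\bar n})$ such that $w := (u, \Psi(\cdot, u))$ is close to $f$ in the senses of \eqref{e:few energy}--\eqref{e:quasiarm_media}.

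\smallskip

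Split $\e_T(A) = \e_T(A\cap K) + \e_T(A\setminus K)$. For the first summand, since $T = \bG_f$ on $K$, the Taylor expansion of the mass (Remark~\ref{r:taylor}) yields
$\e_T(A\cap K) \leq \tfrac12(1+CE^\beta)\int_{A\cap K}|Df|^2$. The elementary inequality $|Df|^2 \leq 2|Dw|^2 + 2(|Df|-|Dw|)^2$ together with \eqref{e:quasiarm} reduces matters to bounding $\int_A|Dw|^2 + C\eta_1 E$. The chain rule and $\|D\Psi\|_0 \leq C(E^{1/2}+\bA)$ (Remark~\ref{r:Psi}) give
$|Dw|^2 \leq (1+C\|D\Psi\|_0^2)|Du|^2 + C\|D\Psi\|_0^2$; the constant term contributes $C(E+\bA^2)|A|\leq C\eps_{13}E + C\bA^2$. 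For $\int_A|Du|^2$ I apply Theorem~\ref{t:hig fct} to the $\D$-minimizer $u$ (scaled to $B_1\subset\subset B_2$), obtaining $\|Du\|_{L^{p_{10}}(B_1)}\leq C\|Du\|_{L^2(B_2)} \leq CE^{1/2}$, and then H\"older:
\[
\int_A |Du|^2 \leq |A|^{1-2/p_{10}}\,\|Du\|_{L^{p_{10}}(B_1)}^2 \leq C\eps_{13}^{1-2/p_{10}}\, E\, .
\]
Summing: $\e_T(A\cap K) \leq (C\eps_{13}^{1-2/p_{10}} + C\eta_1)\,E + C\bA^2$.

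\smallskip

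The residual $\e_T(A\setminus K) \leq \e_T(B_1\setminus K)$ is controlled by a competitor argument modeled on the proof of Theorem~\ref{t:o(E)}. Using $T = \bG_f$ on $K$, Proposition~\ref{p:max} ($|B_2\setminus K|\leq CE^{1-2\beta}$) and \eqref{e:few energy}, one gets $\|T-\bG_f\|(\bC_2)\leq CE^{1-2\beta}$. A Fatou/slicing argument on $\varphi(x,y)=|x|$ selects a radius $r\in(1,2)$ with $\mass(\partial(T\res\bC_r - \bG_f\res\bC_r)) \leq CE^{1-2\beta}$, and Remark~\ref{r:isop} produces a filling $R\subset\Sigma$ of mass $\leq CE^{(1-2\beta)m/(m-1)} = o(E)$, which is where $\beta<1/(2m)$ is used. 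Minimality of $T$ against the competitor $T\res(\bC_4\setminus\bC_r) + \bG_f\res\bC_r + R$, together with the Taylor upper bound on $\bC_r$ and the Taylor lower bound $\e_T(B_r\cap K)\geq \tfrac12(1-CE^\beta)\int_{B_r\cap K}|Df|^2$, gives after subtraction
\[
\e_T(B_1\setminus K) \leq \tfrac12(1+CE^\beta)\int_{B_r\setminus K}|Df|^2 + CE^\beta\int_{B_r}|Df|^2 + o(E) \leq (C\eta_1 + o(1))\, E,
\]
where the last step uses \eqref{e:few energy} and \eqref{e:banana2}. Summing the two bounds and choosing first $\eta_1$ small (depending on $\eta_{10}$) and then $\eps_{13}\leq \eps_{12}(\eta_1)$ small (so that $C\eps_{13}^{1-2/p_{10}}$ and the $o(1)$ terms are absorbed) yields \eqref{e:o(E)1}. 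The main obstacle is the competitor argument of the last paragraph: the careful tracking of error terms, together with the requirement that the isoperimetric filling contribute only $o(E)$, is precisely what forces the restriction $\beta<1/(2m)$.
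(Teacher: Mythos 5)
Your proposal is correct and follows essentially the same route as the paper's own proof: a case split on $\bA$ versus a power of $E$, the first harmonic approximation (Theorem~\ref{t:o(E)}) to control the energy outside $K$ and to pass from $|Df|$ to $|Dw|$, the higher integrability of $\D$-minimizers (Theorem~\ref{t:hig fct}) plus H\"older on the small set $A$, and the same slicing/isoperimetric competitor argument (with $\beta<\frac{1}{2m}$ making the filling an $o(E)$ term) to bound $\e_T$ off $K$. The only cosmetic slips (the contact set $K$ lives in $B_3$, not $B_1$, and the bound $\|Du\|_{L^2(B_2)}\leq CE^{\sfrac{1}{2}}$ deserves the one-line justification via Remark~\ref{r:prime stime} and \eqref{e:quasiarm}) do not affect the argument.
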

 
\begin{proof}
Without loss of generality, we can assume $s=1$ and $x=0$.
We distinguish the two regimes: $E \leq \bA^2$ and $\bA^2 \leq E$.
In the former, clearly
$\be_T(A) \leq C\,E \leq C\, \bA^{2}$.
In the latter, we let $f$ be the $E^{\frac{1}{4m}}$-Lipschitz approximation of $T$ in $\bC_{3}$
and, arguing as for the proof of Theorem~\ref{t:o(E)},
we find a radius $r\in (1,2)$ and a current $R$ such that
\[
\partial R = \la T- \mathbf{G}_f
, \ph, r\ra \quad\text{and}\quad \mass(R)\leq C E^{(1-\frac{1}{2m})\frac{m}{m-1}}.
\]
Therefore, by the Taylor expansion in Remark~\ref{r:taylor}, we have:
\begin{align}\label{e:T min}
\|T\| (\bC_r)&\leq \mass(\mathbf{G}_f\res\bC_r+R) 
\leq
\|\mathbf{G}_f\| (\bC_r)+C\,E^{\frac{2m-1}{2m-2}} \leq
Q\,|B_r|+\int_{B_r}\frac{|Df|^2}{2}+C\,E^{1+\gamma},
\end{align}
where $\gamma = \frac{1}{2m}$.
On the other hand, using again the Taylor expansion for the part of the current which coincides with the graph
of $f$, we deduce as well that
\begin{align}\label{e:T below}
\|T\| ((B_r\cap K)\times\R^{n}) \geq Q\,|B_r\cap K|+\int_{B_r\cap K}\frac{|Df|^2}{2}-C\,E^{1+\gamma}.
% \|T\|(\bC_r)&= \|T\| ((B_r\setminus K)\times\R^{n})
% +\|T\| ((B_r\cap K)\times\R^{n})\notag\\
% &\geq \| T\| ((B_r\setminus K)\times\R^{n})+
% Q\,|B_r\cap K|+\int_{B_r\cap K}\frac{|Df|^2}{2}-C\,E^{1+\gamma}.
\end{align}
Subtracting \eqref{e:T below} from \eqref{e:T min}, we deduce
\begin{equation}\label{e:out K1}
\e_{T}(B_r\setminus K)\leq \int_{B_r\setminus K}\frac{|Df|^2}{2}+ C E^{1+\gamma}.
\end{equation}
If $\eps_{13}$ is chosen small enough, we infer from \eqref{e:out K1} and \eqref{e:few energy} in 
Theorem~\ref{t:o(E)} that 
\begin{equation}\label{e:out K}
\e_{T}(B_r\setminus K)\leq \eta\,E  + C E^{1+\gamma},
\end{equation}
for a suitable $\eta>0$ to be chosen.
Let now $A\subset B_1$ be such that $|A|\leq \eps_{13}\,\omega_m$.
Combining \eqref{e:out K} with the Taylor expansion, we have
\begin{equation}\label{e:on A1}
\e_T (A)\leq \e_T (A\setminus K)+\int_A \frac{|Df|^2}{2}+  C\,E^{1+\gamma}
\leq \int_A
\frac{|Df|^2}{2}+ \eta\, E  + C E^{1+\gamma}.
\end{equation}
If $\eps_{13}$ is small enough, we can again apply Theorem \ref{t:o(E)}. Using the coordinates
of Remark~\ref{r:Psi}, there is a $\D$-minimizing $u$ such that 
$|Df|$ is close in $L^2$ (with an error $\eta E$) 
to $|Dw|$ with $w = (u,\Psi (x, u))$ and by Remark~\ref{r:prime stime}
$\D (u) \leq C E$. On the other hand $|Dw (x)| \leq (1+ \|D\Psi\|_0) |Du| + \|D\Psi\|_0$. Since 
$\|D\Psi\|_0 \leq C E^{\sfrac{1}{2}}$, by Theorem \ref{t:hig fct} $\||Dw|\|_{L^{p_{10}} (B_1)} \leq C E^{\sfrac{1}{2}}$.
Therefore,
\begin{align}\label{e:on A2}
\e_T (A)&\stackrel{\mathclap{\eqref{e:quasiarm}}}{\leq}
\int_A|Dw|^2+ 3\,\eta\, E  + C E^{1+\gamma}
\leq C\left(|A|^{1-\sfrac{2}{p_{10}}} + \eta\right) \, E+ C E^{1+\gamma}.
\end{align}
Hence, if $\eps_{13}$ and $\eta$ are suitably chosen, \eqref{e:o(E)1} follows
from \eqref{e:on A2}.
\end{proof}
 
\subsection{Proof of Theorem \ref{t:higher1}}
We assume without loss of generality that $E>0$ and divide the proof into two steps.

\medskip

\textit{Step 1.}
There exist constants $\gamma\geq 2^m$ and $\varrho >0$ such that, for every $c\in [1,(\gamma\,E)^{-1}]$ and $s\in[2,4]$ with $\bar{s} = s + 4 \, c^{-\sfrac{1}{m}}\leq 4$, we have
\begin{equation}\label{e:higher d2}
\int_{\{\gamma\, c\, E\leq \d\leq1\}\cap B_s}\d\leq \gamma^{-\varrho}
\int_{\left\{\frac{c\,E}{\gamma}\leq \d\leq1\right\}\cap B_{\bar{s}}}\d + C\,
c^{-\sfrac{2}{m}}\, \bA^{2}.
\end{equation}

In order to prove it, let $N_B$ be the constant in Besicovich's covering theorem \cite[Section 1.5.2]{EG}
and choose $N\in\N$ so large that $N_B < 2^{N-1}$.
Let $\eps_{13}$ be as in Proposition \ref{p:o(E)} when we choose $\eta_{10} = 2^{-2m-N}$, and set
\[
\gamma=\max\{2^m,\eps_{13}^{-1}\}\quad\text{and}\quad \varrho=\min\left\{-\log_\gamma(N_B/2^{N-1}), \frac{1}{2m}\right\}\, .
\]
Let $c$ and $s$ be any real numbers as above.
For almost every $x\in \{\gamma\, c\, E\leq \d\leq1\}\cap B_s$, there exists
$r_x$ such that
\begin{equation}\label{e:stop time}
E(T,\bC_{4r_x}(x))\leq c\,E\qquad
\textrm{and}\qquad
E(T,\bC_{t}(x))\geq c\,E\quad \forall t\in ]0, 4\,r_x[.
\end{equation}
Indeed, since $\d(x)=\lim_{r\to0}E(T,\bC_r(x))\geq\gamma\,c\,E\geq2^mc\, E$ and
\[
E(T,\bC_{t}(x))=\frac{\e_T(B_t(x))}{\omega_m\,t^m}
\leq \frac{4^m\,E}{t^m}\leq c\,E
\quad\text{for }\;t\geq\frac{4}{\sqrt[m]{c}},
\]
we just choose $4r_x = \min \{t \leq 4/\sqrt[m]{c} :
E (T, \bC_{t} (x)) \leq c E\}$.
Note also that $r_x\leq 1/\sqrt[m]{c}$.
Consider the current $T$ in $\bC_{4r_x}(x)$.
Setting $A=\{\gamma\, c\, E\leq \d\}\cap B_{4 r_x}(x)$, we have that
\[
\bE(T,\bC_{4r_x}(x))\leq c\,E\leq \frac{E}{\gamma\,E}\leq 
\eps_{13}\quad\mbox{and}\quad
|A|\leq \frac{c\, E \,|B_{4r_x} (x)|}{\gamma\,c\,E } 
\leq \eps_{13} |B_{4r_x} (x)|.
\]
Hence, we can apply Proposition~\ref{p:o(E)} to $T\res\bC_{4r_x}(x)$ to get
\begin{align}\label{e:Bx}
&\int_{B_{r_x}(x)\cap\{\gamma\,c\,E\leq\d\leq 1\}}\d \leq\int_A \d
\leq \e_T (A)
\leq 2^{-2\,m-N}\,\e_T(B_{4r_x}(x)) + C\, r_x^{m+2} \bA^{2}\notag\\
\leq\; & 2^{-2\,m-N}\,(4\,r_x)^{m}\,\omega_m\,\bE(T,\bC_{4r_x}(x)) + C\, r_x^{m+2}\bA^{2}
\stackrel{\mathclap{\eqref{e:stop time}}}{\leq} \,
2^{-N}\,\e_T(B_{r_x}(x)) + C\, r_x^{m+2} \bA^{2}.
\end{align}
Thus, 
\begin{align}\label{e:Bx2}
\e_T(B_{r_x}(x))&= \int_{B_{r_x}(x)\cap\{\d>1\}}\d+
\int_{B_{r_x}(x)\cap\left\{\frac{c\,E}{\gamma}\leq \d\leq 1\right\}}\d
+\int_{B_{r_x}(x)\cap\left\{\d<\frac{c\,E}{\gamma}\right\}}\d\notag\\
&\leq \int_A \d
+\int_{B_{r_x}(x)\cap\left\{\frac{c\,E}{\gamma}\leq \d\leq 1\right\}}\d
+ \frac{c\,E}{\gamma}\,\omega_m\,r_x^m\notag\\
&\stackrel{\mathclap{\eqref{e:stop time},\,\eqref{e:Bx}}}{\leq}\quad
\left(2^{-N}+\gamma^{-1}\right)\,\e_T(B_{r_x}(x)) + C\, r_x^{m+2}\bA^{2}
+\int_{B_{r_x}(x)\cap\left\{\frac{c\,E}{\gamma}\leq \d\leq 1\right\}}\d.
\end{align}
Therefore, recalling that $\gamma\geq 2^{m}\geq 4$, from \eqref{e:Bx} and \eqref{e:Bx2} we infer:
\begin{align*}
\int_{B_{r_x}(x)\cap\{\gamma\,c\,E\leq\d\leq 1\}}\hspace{-0.1cm}\d &\leq
\frac{2^{-N}}{1-2^{-N}-\gamma^{-1}}\int_{B_{r_x}(x)\cap\left\{\frac{c\,E}{\gamma}\leq \d\leq 1\right\}}\hspace{-0.1cm}\d + C\, r_x^{m+2}\bA^{2}\\
&\leq 2^{-N+1}\int_{B_{r_x}(x)\cap\left\{\frac{c\,E}{\gamma}\leq \d\leq 1\right\}}\hspace{-0.1cm}\d + C\, r_x^{m+2} \bA^{2}.
\end{align*}
By Besicovich's covering theorem, we choose $N_B$ families of disjoint balls $\overline{B}_{r_x}(x)$ whose union
covers $\{\gamma\,c\,E\leq\d\leq1\}\cap B_s$ and, since as already noticed $r_x\leq 1/\sqrt[m]{c}$ for every
$x$, we conclude:
\begin{equation*}
\int_{\{\gamma\,c\,E\leq\d\leq 1\}\cap B_s}\d \leq
N_B\,2^{-N+1}
\int_{\left\{\frac{c\,E}{\gamma}\leq \d\leq 1\right\}\cap B_{s+\frac{2}{\sqrt[m]{c}}}}\d
+ C\,c^{-\frac{2}{m}}\, \bA^{2},
\end{equation*}
which, for the above defined $\varrho$, implies \eqref{e:higher d2}.

\medskip

\textit{Step 2.} We iterate \eqref{e:higher d2} in order to conclude \eqref{e:higher1}.
Denote by $L$ the largest integer smaller than $2^{-1} \left((\log_\gamma E^{-1})-1\right)$,
$s_L = 2$ and recursively $s_k = s_{k+1} + 2\,\gamma^{-\frac{2k}{m}}$ for
$k\in \{L-1, \ldots, 1\}$. Notice that, since $\gamma\geq 2^m$, $s_k< 4$ for
every $k$. Thus, we can apply \eqref{e:higher d2} with
$c= \gamma^{2k}$, $s=s_k$ and $\bar{s}=s_{k-1}$ to conclude 
\begin{equation*}%\label{e:da_iterata}
\int_{\{\gamma^{2k+1}\, E\leq \d\leq1\}\cap B_{s_k}} \d\leq \gamma^{-\varrho} 
\int_{\left\{\gamma^{2k-1}\,E\leq \d\leq1\right\}\cap B_{s_{k-1}}}\d + C\, \gamma^{-\frac{4\,k}{m}}\bA^{2}\, \quad \forall \, k\in  \{ 2, \ldots, L\}\, .
\end{equation*}
In particular, iterating this estimate we get
\begin{align}\label{e:iteration}
\int_{\{\gamma^{2\,k+1}\, E\leq \d\leq1\}\cap B_2}\d&\leq
{\gamma^{-(k-1)\,\varrho}}\int_{\{\gamma\, E\leq \d\leq1\}\cap B_{s_1}}\d +
C\, \bA^{2}\, \sum_{\ell=0}^{k-2}\gamma^{-\left(\frac{4\,(k-\ell)}{m}+\ell\,\varrho\right)}.
\end{align}
Set $A_0=\{\d< \gamma\,E\}$,
$A_k=\{\gamma^{2k-1}\,E\leq \d< \gamma^{2k+1}\,E\}$ for $k=1,\ldots, L$,  
and $A_{L+1}=\{\gamma^{2L+1}\,E\leq \d\leq1\}$.
Since $\cup A_k = \{ \d\leq 1\}$, for $p_1< 1+\frac{\varrho}{2}\leq 1+\frac{1}{m}$,
we conclude: 
\begin{align*}
&\int_{B_2\cap \{\d\leq 1\}} \d^{p_1} = 
\sum_{k=0}^{L+1} \int_{A_k\cap B_2} \d^{p_1}\leq\sum_k
\gamma^{(2\,k+1)\,(p_1-1)}\,E^{p_1-1}\int_{A_k\cap B_2} \d \;\\
\stackrel{\mathclap{\eqref{e:iteration}}}{\leq}&\;
C\sum_k\gamma^{k\,(2\,(p_1-1)-\varrho)}\,E^{p_1}
+ C\sum_k\sum_{\ell=0}^{k-2} 
\gamma^{k(2\,(p_1-1)-\frac{4}{m}) +\ell\,(\frac{4}{m}-\varrho)}E^{p_1-1}\,\bA^{2}\\
\leq&\; C E^{p_1} + C \sum_k \gamma^{k(2(p_1-1)-\varrho)}\,E^{p_1-1}\, \bA^{2}.
\end{align*}

\section{Almgren's approximation theorem}\label{s:main}
In this section we show how Theorem~\ref{t:higher1} gives a simple proof of
the approximation result in Theorem~\ref{t:main}.
The key point is the following theorem.

\begin{theorem}[Almgren's strong excess estimate]\label{t:higher}
There are constants $\eps_{11},\gamma_{11}, C> 0$ (depending on $m,n,\bar n,Q$)
with the following property.
Assume $T$ satisfies Assumption \ref{ipotesi_base} in $\bC_4$ and is area minimizing.
If $E =\bE(T,\bC_4) < \eps_{11}$, then
\begin{equation}\label{e:higher2}
\e_T (A) \leq C\, \big(E^{\gamma_{11}} + |A|^{\gamma_{11}}\big) \left(E+\bA^2\right)
\quad \text{for every Borel }\; A\subset B_{\frac98}.
\end{equation}
\end{theorem}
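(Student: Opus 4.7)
\textbf{Proof plan for Theorem \ref{t:higher}.}

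The proof I envision starts by splitting $A = A' \cup A''$ with $A' := A \cap \{\bd \le 1\}$ and $A'' := A \cap \{\bd > 1\}$, and handling each part separately; both bounds will combine into the stated inequality.

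On the ``good'' part $A'$, H\"older's inequality with exponent $p_1$ combined directly with Theorem \ref{t:higher1} gives
\[
\e_T(A') = \int_{A'}\bd \le |A|^{(p_1-1)/p_1}\Bigl(\int_{B_2\cap\{\bd\le 1\}} \bd^{p_1}\Bigr)^{1/p_1} \le C\,|A|^{\gamma_{11}} E^{\gamma_{11}}(E+\bA^2)^{1/p_1},
\]
where I would set $\gamma_{11} := (p_1-1)/p_1$. Using $E\le E+\bA^2$ and $\gamma_{11}+1/p_1 = 1$, this upgrades immediately to $\e_T(A') \le C|A|^{\gamma_{11}}(E+\bA^2)$, matching the desired form.

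For the ``bad'' part $A''$, I would run a Besicovich stopping-time covering. At each $x \in A''$, continuity of $r \mapsto \bE(T,\bC_{4r}(x))$ between $\bE(T,\bC_4(x)) \le CE < \bar\delta$ and $\limsup_{r\to 0}\bE(T,\bC_{4r}(x)) \ge \bd(x) > 1 > \bar\delta$ produces a radius $r_x \in (0,1)$ with $\bE(T, \bC_{4r_x}(x)) = \bar\delta$, for a small threshold $\bar\delta \in (E, \eps_{13})$ to be chosen. Extracting a disjoint subfamily $\{B_{r_{x_i}}(x_i)\}$, the identity $\e_T(B_{4r_{x_i}}(x_i)) = \omega_m(4r_{x_i})^m\bar\delta$ together with $\sum_i \e_T(B_{4r_{x_i}}(x_i)) \le CE$ forces
\[
\sum_i r_{x_i}^m \le CE/\bar\delta \quad\text{and}\quad r_{x_i} \le C(E/\bar\delta)^{1/m}.
\]
Moreover $|A'' \cap B_{r_{x_i}}(x_i)| \le \e_T(B_{r_{x_i}}(x_i)) \le C\bar\delta\, r_{x_i}^m$, so if $\bar\delta$ is chosen below $\eps_{13}(\eta_{10})$, Proposition \ref{p:o(E)} applies on each stopping-ball, yielding
\[
\e_T(A'' \cap B_{r_{x_i}}(x_i)) \le \eta_{10}\bar\delta\,\omega_m r_{x_i}^m + C\bA^2 r_{x_i}^{m+2}.
\]
Summing via Besicovich gives $\e_T(A'') \le C\eta_{10} E + C\bA^2 E^{1+2/m}\bar\delta^{-1-2/m}$.

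The main obstacle, as I anticipate it, is converting the qualitative smallness of $\eta_{10}$ from Proposition \ref{p:o(E)} into a quantitative $E^{\gamma_{11}}$ gain. This is the essential Gehring-type amplification that distinguishes Theorem \ref{t:higher} from the weak estimate. I would address it by either (i) choosing $\eta_{10}$ to be a small positive power of $E$ and checking that the corresponding $\eps_{13}(\eta_{10})$ still leaves room for a valid $\bar\delta$, provided $\eps_{11}$ is chosen small enough, or (ii) iterating the stopping-time covering a number of times depending only on $\gamma_{11}$, each iteration gaining a fixed multiplicative factor $<1$ and thus collectively a factor $E^{\gamma_{11}}$. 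Either way, after possibly shrinking $\gamma_{11}$ to match the exponent from step one, the contribution of $A''$ becomes $CE^{\gamma_{11}}(E+\bA^2)$, and combining with the bound on $A'$ concludes the proof.
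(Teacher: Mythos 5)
Your treatment of $A\cap\{\bd\le 1\}$ via H\"older and Theorem \ref{t:higher1} is fine (modulo the standard density comparison needed to pass from $\e_T(A\cap\{\bd\le1\})$ to $\int_{A\cap\{\bd\le1\}}\bd$), and it matches in substance what the paper does for that part through the higher integrability of $|Df|$ on $K$, cf.~\eqref{e:uKtris}. The genuine gap is exactly where you locate it, and neither of your proposed repairs works. For (i): the constant $\eps_{13}=\eps_{13}(\eta_{10})$ in Proposition \ref{p:o(E)} is produced (through the compactness/contradiction argument behind Theorem \ref{t:o(E)}) with no rate relating $\eps_{13}$ to $\eta_{10}$; setting $\eta_{10}=E^\alpha$ turns the applicability condition into $E<\eps_{13}(E^\alpha)$, which cannot be verified for all small $E$ (if $\eps_{13}(\eta)$ decayed like $e^{-1/\eta}$ the condition would fail for every small $E$), and shrinking $\eps_{11}$ does not help since the condition must hold on the whole range $E<\eps_{11}$. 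For (ii): a number of iterations depending only on $\gamma_{11}$ gains only a fixed constant factor, never $E^{\gamma_{11}}$; and iterating the stopping-time inside the stopping balls does not compound, because at each stopping scale the local excess is pinned at the threshold $\bar\delta$ and Proposition \ref{p:o(E)} gains the factor $\eta_{10}$ only relative to that local excess, so every stage reproduces the same bound $C\eta_{10}E$. The level-set iteration that does convert a fixed-factor gain into a power of $E$ is the one in the proof of Theorem \ref{t:higher1}, and it relies on the dyadic stratification of $\{cE\le\bd\le1\}$, a structure unavailable on $\{\bd>1\}$.

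The paper's proof of Theorem \ref{t:higher} uses a different mechanism for the high-density region: with $\beta_1=\frac1{4m}$ it builds a better competitor by convolving $\xii\circ f$ and projecting back onto $\cQ$ with the almost projection $\ro^\star_\delta$ (Proposition \ref{p:conv}); the explicit parameter choices $\eps=E^a$, $\delta=E^b$, $s=E^c$, together with \eqref{e:uKtris} (Theorem \ref{t:higher1} used quantitatively, to control the convolution where $\dist(\cdot,\cQ)$ is large and on the bad set), yield \eqref{e:conv} with a power gain $E^{\gamma_{12}}$. Comparing the mass of $T$ with that of this competitor (via the isoperimetric inequality and the Taylor expansion of the mass of a graph) gives $\e_T(B_s\setminus K)\le C\,E^{\gamma}(E+\bA^2)$, which is the quantitative substitute for your $\eta_{10}$-term and controls precisely the region where $\bd>1$. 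Without an ingredient of this kind, i.e.\ some quantitative replacement for the compactness step behind Proposition \ref{p:o(E)}, your plan does not close.
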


This estimate complements \eqref{e:higher1} enabling to control
the excess in the region where $\bd > 1$.
We call it strong Almgren's estimate because a similar formula
can be found in the big regularity paper (cf.~\cite[Sections 3.24-3.26 \& 3.30(8)]{Alm})
and is a strengthened version of Proposition~\ref{p:o(E)}.
To achieve \eqref{e:higher2} we construct a suitable competitor to estimate the size of the set $K$ where the graph 
of the $E^{\beta}$-Lipschitz approximation $f$ differs from $T$.
Following Almgren, we embed $\Iq$ in a large Euclidean space,
via a biLipschitz embedding $\xii$. We then regularize $\xii\circ f$ by convolution and project it back onto
$\cQ= \xii (\Iq)$. To avoid loss of energy we need a rather special ``almost projection'' $\ro^\star_\delta$.

\begin{proposition}\label{p:ro*}
For every $\bar{n}, Q\in \N\setminus \{0\}$ there are geometric constants $\delta_0, C>0$ with
the following property. 
For every $\delta\in ]0, \delta_0[$
there is $\ro^\star_\delta:\R^{N(Q,\bar n)}\to \cQ=\xii(\Iq(\R^{\bar n}))$ such that $|\ro^\star_\delta(P)-P|\leq C\,\delta^{8^{-\bar n Q}}$ for all $P \in \cQ$
and, for every $u\in W^{1,2}(\Omega,\R^{N})$, the following holds:
\begin{equation}\label{e:ro*1}
\int |D(\ro^\star_\delta\circ u)|^2\leq
\left(1+C\,\delta^{8^{-\bar{n}Q-1}}\right)\int_{\left\{\dist(u,\cQ)\leq \delta^{\bar nQ+1}\right\}} |Du|^2+
C\,\int_{\left\{\dist(u,\cQ)> \delta^{\bar nQ+1}\right\}} |Du|^2\, .
\end{equation}
\end{proposition}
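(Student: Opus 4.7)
I would prove the proposition by induction on $Q$, exploiting the stratified structure of $\cQ = \xii(\Iq(\R^{\bar n}))$ according to the coincidence patterns of the $Q$ points. In the base case $Q=1$, $\Iq(\R^{\bar n}) = \R^{\bar n}$ and $\xii$ is a linear isometric embedding, so $\cQ$ is an affine subspace and the orthogonal projection onto it is $1$-Lipschitz, fixes $\cQ$ pointwise, and therefore satisfies both claims with constant $1$ and no $\delta$ correction.

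\emph{Inductive step.} Assume the proposition for every $Q' < Q$. Fix a threshold $\tau = \tau(\delta)$ (a suitable power of $\delta$ much larger than $\delta^{\bar n Q + 1}$) and decompose $\R^N$ into two overlapping open regions based on the nearest point $T = \sum_i \a{p_i} \in \Iq$: a \emph{splittable region} $\Omega_{\text{spl}}$, where $\{p_1, \ldots, p_Q\}$ admits a partition into $J \geq 2$ clusters $I_1, \ldots, I_J$ of sizes $Q_j < Q$ with within-cluster diameter $\leq \tau$ and between-cluster separation $\geq \tau / \delta$; and a \emph{collapsed region} $\Omega_{\text{col}}$, where no nontrivial such splitting exists and the points of $T$ are essentially concentrated around $\etaa(T)$. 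On $\Omega_{\text{spl}}$ the set $\cQ$ is locally (modulo the permutation action on equal-sized clusters) a product of smaller images $\xii(\mathcal{A}_{Q_j}(\R^{\bar n}))$, so I would apply the inductive $\rho^\star_\delta$ factorwise. On $\Omega_{\text{col}}$ I would first slide $P$ toward the diagonal copy $\{Q \a{p} : p \in \R^{\bar n}\} \subset \cQ$ via the $Q=1$ projection (which is $1$-Lipschitz) and use that $\cQ$ is nearly flat transverse to the diagonal at scale $\tau$. The two constructions would then be glued by a Lipschitz partition of unity of transition thickness $\sim \tau$; since both local values agree with the nearest-point projection onto $\cQ$ up to an error of order $\delta^{8^{-\bar n Q}}$, the interpolation contributes an acceptable error.

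For the energy estimate \eqref{e:ro*1}, I would split $\Omega$ into $\{\dist(u,\cQ) \leq \delta^{\bar n Q + 1}\}$ and its complement. On the close set, the Lipschitz constant of each local piece is $1 + C \delta^{8^{-\bar n Q - 1}}$, obtained by combining the inductive hypothesis with a $(1 + O(\dist(u,\cQ)))$ bound on the Jacobian of the orthogonal projection onto the smooth stratum; on the far set, a crude $\delta$-dependent Lipschitz bound suffices to give the constant $C$ in front of $\int_{\{\dist > \delta^{\bar n Q + 1}\}} |Du|^2$.

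\emph{Main obstacle.} The delicate point is keeping the multiplicative constant on the near set genuinely of the form $1 + o(1)$ as $\delta \downarrow 0$ rather than an absolute constant greater than $1$. This forces the threshold $\tau$, the tube thickness $\delta^{\bar n Q + 1}$ and the super-exponentially small gain $8^{-\bar n Q}$ to interlock precisely: each induction level effectively replaces $\delta$ by a fractional power of itself in the next gluing, and iterating this $\bar n Q$ times produces the final exponent. Controlling the Lipschitz constant of the partition of unity in the transitional annulus where $\dist(P,\cQ) \sim \tau$, and verifying that it does not swamp the accumulated inductive error, is where all the bookkeeping lives and where I expect the proof to differ most substantially (and favorably) from \cite[Theorem 1.3]{Alm}.
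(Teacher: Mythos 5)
Your base case and the overall idea of exploiting the stratified structure of $\cQ$ are sound, but the two key steps of the inductive scheme have genuine gaps. First, the gluing: a Lipschitz partition of unity applied to your local maps does not produce a map with values in $\cQ$, because $\cQ=\xii(\Iq(\R^{\bar n}))$ is a non-convex cone complex, and you cannot repair this by composing with a nearest-point projection onto $\cQ$ — near the lower-dimensional strata no retraction with Lipschitz constant $1+o(1)$ (indeed, no Lipschitz retraction obtained by nearest-point projection at all) is available, and manufacturing a substitute for it is precisely the content of the proposition, so invoking it is circular. The paper's construction never averages $\cQ$-valued maps: all interpolations happen along segments contained in a single closed convex face (the radial maps $\Phi_\tau$ of Lemma~\ref{l:radial}, applied in the fibers $V_p$ of the cones $\mathscr{C}(F)$), and the extension off $\cQ$ is performed face by face via Kirszbraun's lemma (Lemma~\ref{l:kirsz}) into the \emph{closed convex} face $\overline F$, which is what allows the constant $1+C\,\delta^{8^{-\bar nQ-1}}$ to survive the finitely many gluing steps.

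Second, the local model on the splittable region is not correct in the geometry that matters. The estimate \eqref{e:ro*1} requires ambient Lipschitz control in $\R^{N}$, and near a point $\xii(T)$ with $T$ split into well-separated clusters the set $\cQ$ is \emph{not} (even approximately isometrically) a product of the smaller images $\xii(\mathcal{A}_{Q_j}(\R^{\bar n}))$: the embedding records the sorted projections $P_i\cdot e_k$ onto a fixed finite family of directions, and two points lying in clusters separated by $\tau/\delta$ in $\R^{\bar n}$ can still have coinciding or crossing projections on some $e_k$, so creases of $\cQ$ run through your region $\Omega_{\text{spl}}$ and destroy the product structure (moreover the would-be factors live in different Euclidean spaces $\R^{N(Q_j,\bar n)}$, so even a near-isometric identification would have to be constructed, with constants $1+o(1)$, not merely bounded). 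Similarly, "$\cQ$ is nearly flat transverse to the diagonal'' is false: the points $Q\a{p}$ are the most singular points of $\cQ$, where it is a nontrivial cone bundle over the diagonal. This is exactly why the paper stratifies $\cQ$ by the coincidence/ordering pattern of \emph{all} coordinates (the faces $\cF_k$), runs the collapsing induction over the skeleta $S_k$ with thresholds $c_k=\delta^{8^{-\bar nQ+k}}$, and only then extends outward; your induction on $Q$ would have to redo this face analysis at the crossing walls anyway, so as written the argument does not go through.
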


The proof of Proposition \ref{p:ro*} is postponed to the next section. Here we show Theorem \ref{t:higher}
and hence conclude the Theorems \ref{t:main} and \ref{t:harmonic_final}. Theorem
\ref{t:higher1} enters crucially in the argument when estimating the second summand of \eqref{e:ro*1} for the regularization of $\xii \circ f$.

\subsection{Regularization by convolution}
Here we construct the competitor.

\begin{proposition}\label{p:conv}
Let $\beta_1\in \left(0, \frac{1}{2m}\right)$ and
$T$ be an area minimizing current satisfying Assumption \ref{ipotesi_base} 
in $\bC_4$. Let
$f$ be its $E^{\beta_1}$-Lipschitz approximation.
Then, there exist constants $\bar \eps_{12},\gamma_{12}, C>0$ and a subset of radii $B\subset [\sfrac{9}{8},2]$ with
$|B|>1/2$ with the following properties.
If $\bE(T,\bC_4) \leq \bar \eps_{12}$, for every $\sigma\in B$, there exists a $Q$-valued function $g\in \Lip(B_\sigma,\Iq)$ such that
\begin{gather*}
g\vert_{\de B_{\sigma}}=f\vert_{\de B_{\sigma}},\quad
\Lip(g)\leq C\,E^{\beta_1},\quad
\supp(g(x)) \subset \Sigma \quad \forall\, x \in B_\sigma,
\end{gather*}
and
\begin{equation}\label{e:conv}
\int_{B_{\sigma}}|Dg|^2\leq \int_{B_{\sigma}\cap K}|Df|^2 + C\, E^{\gamma_{12}}\left(E+\bA^2\right).
\end{equation}
\end{proposition}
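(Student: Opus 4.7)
Write $f = \sum_i \a{((f^i)', \Psi(\cdot,(f^i)'))}$, $f' := \sum_i \a{(f^i)'}:B_3 \to \Iq(\R^{\bar n})$, and $F := \xii\circ f'$. Then $F$ is Lipschitz into $\cQ$ with $\Lip(F)\leq CE^{\beta_1}$. Fix exponents $0<b<a<1$ (to be determined), set $\ell := E^a$, $\delta := E^b$, and let $F_\ell := F*\varphi_\ell$ be a standard convolution. Since $F$ takes values in $\cQ$, the pointwise bound $\dist(F_\ell(x),\cQ) \leq |F_\ell(x)-F(x)| \leq \Lip(F)\ell \leq CE^{\beta_1+a}$ shows that, provided $b(\bar nQ+1) < \beta_1 + a$, the ``bad set'' $\{\dist(F_\ell,\cQ) > \delta^{\bar nQ+1}\}$ in Proposition \ref{p:ro*} is empty. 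Define $\tilde g_0 := \xii^{-1}\circ \ro^\star_\delta \circ F_\ell$; once an interior Lipschitz map $\tilde g:B_\sigma\to\Iq(\R^{\bar n})$ with $\tilde g|_{\partial B_\sigma}=f'|_{\partial B_\sigma}$ is produced, set $g := \sum_i \a{(\tilde g^i,\Psi(\cdot,\tilde g^i))}$; the required bounds on $g$ follow from those on $\tilde g$ via Remark \ref{r:Psi} and the chain rule, with the $\Psi$-correction terms absorbed into the error $CE^{\gamma_{12}}(E+\bA^2)$.

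\textbf{Energy estimate and main obstacle.} By property (iii) of $\xii$, Proposition \ref{p:ro*} (with empty bad set), and Jensen's inequality applied to $|\cdot|^2$ against the probability measure $\varphi_\ell$,
\begin{equation*}
\int_{B_\sigma}|D\tilde g_0|^2 \leq (1+CE^{b\cdot 8^{-\bar nQ-1}})\int_{B_{\sigma+\ell}}|Df'|^2\, .
\end{equation*}
The right-hand side splits as $\int_{B_\sigma\cap K}|Df'|^2$ (the desired main term), a thin-annulus contribution bounded by $CE\ell$ after a Fubini choice of $\sigma$ in a set $B\subset[\sfrac{9}{8},2]$ with $|B|>\sfrac{3}{4}$ on which the spherical traces of $|Df|^2$ are uniformly bounded, and the crucial ``bad-region'' piece $\int_{B_{\sigma+\ell}\setminus K}|Df'|^2 \leq CE^{2\beta_1}|B_{\sigma+\ell}\setminus K| \leq C\,\e_T(A)$, where $A := \{\bmax\be_T > 2^{-m}E^{2\beta_1}\}\cap B_{\sigma+\ell+r_0}$ by Proposition \ref{p:max}. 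The \emph{main obstacle} is to show $\e_T(A) \leq CE^{\gamma_{12}}(E+\bA^2)$ for some $\gamma_{12}>0$: this is exactly where Theorem \ref{t:higher1} enters. The higher integrability $\int_{\{\bd\le 1\}\cap B_2}\bd^{p_1} \leq CE^{p_1-1}(E+\bA^2)$, combined with a Hardy--Littlewood-type weak inequality for $\bmax\be_T$, gives $|A\cap\{\bd\le 1\}| \leq CE^{-2\beta_1 p_1}E^{p_1-1}(E+\bA^2)$, a strictly positive power of $E$ times $(E+\bA^2)$ when $\beta_1$ is chosen small enough; the complementary piece $A\cap\{\bd>1\}$ is negligible in the small-excess regime via monotonicity. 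Feeding the resulting bound on $|A|$ into the weak excess estimate of Proposition \ref{p:o(E)} yields $\e_T(A) \leq CE^{\gamma_{12}}(E+\bA^2)$.

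\textbf{Boundary matching.} Refining the Fubini selection of $\sigma\in B$ so that also the tangential traces $\int_{\partial B_\sigma}|D_\tau \tilde g_0|^2$, $\int_{\partial B_\sigma}|D_\tau f'|^2$ and $\int_{\partial B_\sigma}\cG(\tilde g_0,f')^2$ are all controlled (using the global $L^2$-closeness $\|\cG(\tilde g_0,f')\|_{L^2} \leq CE^{\beta_1+a}+C\delta^{8^{-\bar nQ}}$ coming from $|F_\ell-F|\leq CE^{\beta_1+a}$ and $|\ro^\star_\delta(P)-P|\leq C\delta^{8^{-\bar nQ}}$ on $\cQ$), apply Lemma \ref{l:interpolation} on a thin spherical shell to glue $\tilde g_0$ inside to $f'$ on $\partial B_\sigma$. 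This produces $\tilde g$ with the required boundary values, $\Lip(\tilde g)\leq CE^{\beta_1}$, and Dirichlet energy exceeding that of $\tilde g_0$ by at most $CE^{\gamma_{12}}(E+\bA^2)$. Balancing the exponents $a,b$ (and shrinking $\gamma_{12}$ if necessary) so that the gain $CE^{b\cdot 8^{-\bar nQ-1}}\cdot E$, the annulus term $CE\ell$, the bad-region term $\e_T(A)$, and the interpolation penalty are all $O(E^{1+\gamma_{12}}+E^{\gamma_{12}}\bA^2)$, we obtain \eqref{e:conv}.
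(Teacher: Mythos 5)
Your construction goes wrong at exactly the point you identify as the main obstacle, and the proposed fix does not close it. By moving the convolution inside with Jensen, $\int_{B_\sigma}|DF_\ell|^2\leq\int_{B_{\sigma+\ell}}|Df'|^2$, you are forced to estimate the raw bad-region term $\int_{B_{\sigma+\ell}\setminus K}|Df'|^2\leq C E^{2\beta_1}|B_{\sigma+\ell}\setminus K|$, which via \eqref{e:max1} is only of size $\e_T(A)\lesssim E$ — and you then need $\e_T(A)\leq C E^{\gamma_{12}}(E+\bA^2)$ for $A=\{\bmax\be_T>2^{-m}E^{2\beta_1}\}$. Neither of your two cited tools gives this. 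Proposition \ref{p:o(E)} only yields $\e_T(A)\leq \eta_{10}E+C\bA^2$ for a constant $\eta_{10}$ that must be fixed \emph{before} $\eps_{13}$ is chosen; since $\eps_{13}$ depends on $\eta_{10}$, you cannot let $\eta_{10}$ be a power of $E$. Theorem \ref{t:higher1} controls only $\int_{\{\bd\leq1\}}\bd^{p_1}$ and says nothing about the part of the excess measure sitting where $\bd>1$ or where $\e_T$ concentrates, which is precisely what can carry $\e_T(A)$; your appeal to "monotonicity" for the piece $\{\bd>1\}$ is unsubstantiated. In fact the bound $\e_T(\text{bad set})\leq CE^{\gamma}(E+\bA^2)$ is essentially Theorem \ref{t:higher} (Almgren's strong excess estimate), which in this paper is \emph{deduced from} Proposition \ref{p:conv} via the comparison argument; importing it here makes the argument circular.

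The paper's proof is organized so that this term never appears. One keeps the convolution outside: the main term in $B_{r_1}$ is $\|(|Df'|\,{\bf 1}_K)*\ph_\eps\|_{L^2}^2\leq\int_{B_{r_2}\cap K}|Df'|^2$, while the contribution of $K^c$ is estimated by $\|(|Df'|\,{\bf 1}_{K^c})*\ph_\eps\|_{L^2}^2\leq CE^{2\beta_1}\|{\bf 1}_{K^c}\|_{L^1}^2\|\ph_\eps\|_{L^2}^2\leq CE^{2-2\beta_1}\eps^{-m}$: mollification converts the $L^1$-smallness of ${\bf 1}_{K^c}$ into $L^2$-smallness, which is why the paper must take $\eps$ \emph{not too small} ($E^{1-2\beta_1}\leq\eps^m$ in \eqref{e:scelte}) and why Theorem \ref{t:higher1} enters only through \eqref{e:uKtris} on the small set $Z$ where the mollified map leaves the $\delta^{\bar nQ+1}$-neighborhood of $\cQ$. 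Jensen's inequality discards exactly this gain, leaving an error of order $E$ rather than $E^{1+\gamma_{12}}+E^{\gamma_{12}}\bA^2$. The peripheral parts of your plan (with your unnormalized convolution and a very small $\delta$ the bad set of Proposition \ref{p:ro*} is indeed empty, and the boundary gluing via Lemma \ref{l:interpolation} with a Fubini choice of $\sigma$ can be made to work), but they are not where the difficulty of the proposition lies; as it stands the energy estimate \eqref{e:conv} is not proved.
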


\begin{proof} By Remark \ref{r:Psi} we assume that $\Psi (0) =0$, $\|D \Psi\|_0 \leq
C (E^{\sfrac{1}{2}} + \bA)$ and $\|D^2\Psi\|_0 \leq C \bA$. 
Since $|Df|^2\leq C\, \d_T \leq C E^{2\,\beta_1}\leq 1$ on $K$, by Theorem \ref{t:higher1} there is
$q_1 = 2\, p_1 > 2$ such that
\begin{equation}\label{e:uKtris}
\||Df|\|_{L^{q_1} (K\cap B_2)}^2\leq C\,E^{1-\sfrac{1}{p_1}} (E+\bA^2)^{\sfrac{1}{p_1}}
\leq C(E + \bA^2)\, .
\end{equation}
Given two (vector-valued) functions $h_1$ and $h_2$ and two radii $0<\bar r<r$, we denote by $\lin(h_1,h_2)$ the
linear interpolation in $B_{r}\setminus \bar B_{\bar r}$ between $h_1|_{\partial B_r}$ and $h_2|_{\partial B_{\bar r}}$.
More precisely, if $(\theta, t)\in {\mathbb S}^{m-1}\times [0, \infty)$ are spherical coordinates, then
$$
\lin (h_1, h_2) (\theta, t) = \frac{r-t}{r-\bar r}\, h_2 (\theta, t)
+ \frac{t-s}{r-\bar r}\, h_1 (\theta, t)\, .
$$
Next, let $\delta>0$ and $\eps>0$ be two parameters and let  $1<r_1<r_2<r_3< 2$ be three radii, all to be chosen later.
To keep the notation simple, we will write $\ro^\star$ in place of $\ro^\star_\delta$.
Let $\ph\in C^\infty_c(B_1)$ be a standard (nonnegative!) mollifier. We also use the notation
$f (x)=(f_1 (x),f_2 (x)) \in \Iq(\R^{\bar n}\times \R^l)$ meaning that $f (x)= \sum_i \a{(f^i_1 (x), f^i_2 (x))}$
with $(f^i_1 (x), f^i_2 (x))\in \R^{\bar{n}}\times\R^{l}$ 
and the maps $f_1$ and $f_2$ are then given by $f_j (x) = \sum_i \a{f^i_j (x)}$. This does not
create confusion in ``ordering the sheets'': since the points $f^i (x)$ belong to $\Sigma$
we have indeed the relation $f_2^j (x) = \Psi (x, f_1^j (x))$. We moreover set
$f' := \xii \circ f_1$. Recall the map $\ro$ of \cite[Theorem 2.1]{DS1} and
define:
\begin{equation}\label{e:v}
\g:=
\begin{cases}
\sqrt{E}\,\ro \circ \lin\left(\frac{\f}{\sqrt{E}},\ro^\star\left(\frac{\f}{\sqrt{E}}\right)\right)& \text{in }\; B_{r_3}\setminus B_{r_2},\\
\sqrt{E}\,\ro \circ \lin\left(\ro^\star\left(\frac{\f}{\sqrt{E}}\right),\ro^\star
\left(\frac{\f}{\sqrt{E}}*\ph_\eps\right)\right)
& \text{in }\; B_{r_2}\setminus B_{r_1},\\
\sqrt{E}\,\ro^\star\left(\frac{\f}{\sqrt{E}}*\ph_\eps\right)& \text{in }\; B_{r_1}.
\end{cases}
\end{equation}
Finally set $g_1:= \xii^{-1}\circ g'$ and $g:= \sum_i \a{(g_1^i,\Psi(x, g_1^i))}$.
We claim that, for $\sigma:= r_3$ in a suitable set $B\subset[\sfrac{9}{8},2]$ with $|B|>1/2$, we can
choose $r_2 =r_3-s$ and $r_1= r_2-s$ so that $g$ satisfies the conclusion of the proposition.
Some computations will be simplified taking into account that our choice
of the parameters will imply the following inequalities:
\begin{equation}\label{e:scelte}
\delta^{2\cdot8^{-\bar{n}Q}} \leq s\, ,\quad
 \eps \leq s \quad\mbox{and}\quad E^{1-2 \beta_1}\leq \eps^m\, .
\end{equation}
We start noticing that clearly $g\vert_{\de B_{r_3}}=f\vert_{\de B_{r_3}}$. As for the Lipschitz constant,
it suffices to estimate 
the Lipschitz constant of $\g$. This can be easily done observing that:
\begin{align*}
 \begin{cases}
\Lip(\g)\leq C\,\Lip(\f*\ph_{\eps})\leq C\,\Lip(\f)\leq C\,E^{{\beta_1}}&
\textrm{in }\; B_{r_1},\\
\Lip(\g)\leq C \,\Lip(\f)+C\,\frac{\norm{\f-\f*\ph_\eps}{L^\infty}}{s}
\leq C (1+\frac{\eps}{s})\,\Lip(\f)
\leq C\,E^{{\beta_1}}
&\textrm{in } \; B_{r_2}\setminus B_{r_1},\\
\Lip(\g)\leq C \,\Lip(\f)+C\,E^{1/2}\,
\frac{\delta^{8^{-\bar nQ}}}{s}\leq C\,E^{{\beta_1}}
+C\,E^{1/2}
\leq C\,E^{\beta_1}
&\textrm{in } \; B_{r_3}\setminus B_{r_2}\, .
\end{cases}
\end{align*}
In the first inequality of the last line we have used that, since $\cQ$ is a cone, 
$E^{-\sfrac{1}{2}} f'(x)\in \cQ$ for every $x$: therefore
$|\ro^\star (f'/E^{\sfrac{1}{2}}) - f'/ E^{\sfrac{1}{2}}|
\leq C \delta^{8^{-\bar{n}Q}}$.
We pass now to estimate the Dirichlet energy of $g$.

\medskip

\noindent\textit{Step 1. Energy in $B_{r_3}\setminus B_{r_2}$.}
By Section \ref{ss:xii},
the energy of the first component $g_1$ coincides with the
(classical!) Dirichlet energy of $g'$.
By Proposition~\ref{p:ro*}, $|\ro^\star(P)-P|\leq C\,\delta^{8^{-\bar n Q}}$ for all $P\in\cQ$.
Thus, elementary estimates on the linear interpolation give
\begin{align}\label{e:A}
\int_{B_{r_3}\setminus B_{r_2}}|D\g|^2\leq{}&\frac{C\,E}{\left(r_3-r_2\right)^2}\int_{B_{r_3}\setminus B_{r_2}}
\left|\textstyle{\frac{\f}{\sqrt{E}}}-\ro^\star\left(\textstyle{\frac{\f}{\sqrt{E}}}\right)\right|^2+
C\int_{B_{r_3}\setminus B_{r_2}}|D\f|^2\notag\\
& + C \int_{B_{r_3}\setminus B_{r_2}}|D(\ro^\star\circ \f)|^2
\leq C\int_{B_{r_3}\setminus B_{r_2}}|D\f|^2+ C\,E\,s^{-1}\, \delta^{2\cdot 8^{-\bar n Q}}\, .
\end{align}
As for $g_2$, we compute
$Dg_2^i(x) = D_x \Psi(x, g_1^i(x)) + D_u\Psi(x, g_1^i(x)) Dg_1^i(x)$ and so
\begin{equation}\label{e:A riemanniano}
\int_{B_{r_3}\setminus B_{r_2}}|Dg_2|^2 \leq C\,s\, (E+\bA^2)\,,
\end{equation}
where we used the estimate $\|Dg_2\|_0 \leq C\, \|D\Psi\|_0\leq C(E^{\sfrac{1}{2}}+\bA)$.

\noindent\textit{Step 2. Energy in $B_{r_2}\setminus B_{r_1}$.}
Here, using the same interpolation inequality and a standard estimate on convolutions of $W^{1,2}$
functions,
we get
\begin{align}\label{e:B}
&\int_{B_{r_2}\setminus B_{r_1}}|D\g|^2 \leq
C\int_{B_{r_2 + \eps}\setminus B_{r_1 - \eps}}|D\f|^2+
\frac{C}{(r_2-r_1)^2}\int_{B_{r_2}\setminus B_{r_1}}|\f-\ph_\eps*\f|^2\notag\\
\leq& C\int_{B_{r_2+ \eps}\setminus B_{r_1 - \eps}}|D\f|^2+
C\,\eps^2 s^{-2}\int_{B_{3}}|D\f|^2
\leq C\int_{B_{r_2 + \eps}\setminus B_{r_1- \eps}}|D\f|^2+C\,\eps^2\,E\, s^{-2}\, .
\end{align}

Similarly, for the second component we have that 
\begin{equation}\label{e:B riemanniano}
\int_{B_{r_2 + \eps}\setminus B_{r_1 - \eps}}|Dg_2|^2 \leq C\,(\bA^2+E)\, s.
\end{equation}

\noindent\textit{Step 3. Energy in $B_{r_1}$.}
Define
$Z:=\left\{ \dist\left(\frac{\f}{\sqrt{E}} * \ph_\eps,\cQ\right)>\delta^{\bar{n} Q+1}\right\}$ and
use \eqref{e:ro*1} to get
\begin{equation}\label{e:C}
\int_{B_{r_1}}|D\g|^2\leq\Big(1+C\,\delta^{8^{-\bar{n} Q-1}}\Big)\int_{B_{r_1}\setminus Z}\left|D\left(\f*\ph_\eps\right)\right|^2
+C\int_{Z}\left|D\left(\f*\ph_\eps\right)\right|^2=:I_1+I_2.
\end{equation}
We consider $I_1$ and $I_2$ separately. For $I_1$ we first observe the elementary inequality
\begin{align}
\|D (f'* \varphi_\eps)\|^2_{L^2}\leq & \||Df'|*\varphi_\eps\|_{L^2}^2
\leq \|(|D\f|\,{\bf 1}_K)*\ph_\eps\|_{L^2}^2+
\|(|D\f|\,{\bf 1}_{K^c})*\ph_\eps\|^2_{L^2}\notag\\
&\qquad\qquad\qquad\qquad+2\|(|D\f|\,{\bf 1}_K)*\ph_\eps\|_{L^2} 
\|(|D\f|\,{\bf 1}_{K^c})*\ph_\eps\|_{L^2}\, ,\label{e:I1}
\end{align}
where $K^c$ is the complement of $K$ in $B_3$.
Recalling $r_1+\eps \leq r_1+s = r_2$ we estimate the first summand in \eqref{e:I1} as follows:
\begin{equation}\label{e:I1 1}
\|(|D\f|\,{\bf 1}_{K})*\ph_\eps\|_{L^2 (B_{r_1})}^2 \leq
\int_{B_{r_1+\eps}}\left(|D\f|\,{\bf 1}_{ K}\right)^2
\leq\int_{B_{r_2}\cap K}|D\f|^2\, .
\end{equation}
To treat the other terms recall that $\Lip(\f)\leq C\,E^{{\beta_1}}$ and $|K^c|\leq C\,E^{1-2{\beta_1}}$:
\begin{equation}\label{e:I1 2}
\|(|D\f| {\bf 1}_{K^c})*\ph_\eps\|_{L^2 (B_{r_1})}^2\leq
CE^{2{\beta_1}}\|{\bf 1}_{K^c}*\ph_\eps\|_{L^2}^2
\leq
CE^{2\beta_1}\norm{{\bf 1}_{K^c}}{L^1}^2\norm{\ph_\eps}{L^2}^2
\leq \frac{CE^{2-2\beta_1}}{\eps^{m}}.
\end{equation}
Putting \eqref{e:I1 1} and \eqref{e:I1 2} in \eqref{e:I1} and recalling $E^{1-2\beta_1} \leq \eps^m$
and $\int |Df'|^2 \leq C E$, we get
\begin{align}\label{e:I1 3}
I_1 \leq{}& \int_{B_{r_2}\cap K}|D\f|^2+C\, \delta^{8^{-\bar{n}Q-1}}\,E+
C\, \eps^{-\sfrac{m}{2}}\, E^{\sfrac{3}{2}-{\beta_1}}\, .
\end{align}
For what concerns $I_2$, first we argue as for $I_1$, splitting in $K$ and $K^c$,
to deduce that
\begin{equation}\label{e:I2}
I_2\leq C\int_{Z}\left((|D\f|\,{\bf 1}_{K})*\ph_{\eps}\right)^2+
C\, \eps^{-\sfrac{m}{2}} E^{\sfrac{3}{2}-{\beta_1}}.
\end{equation}
Then, regarding the first summand in \eqref{e:I2},
we note that
\begin{equation}\label{e:Z}
|Z|\,\delta^{2\bar{n}Q+2}\leq\int_{B_{r_1}}\left|\textstyle{\frac{\f}{\sqrt{E}}}*\ph_{\eps}-
\textstyle{\frac{\f}{\sqrt{E}}}\right|^2\leq C\,\eps^2.
\end{equation}
Since $|Df'|\leq |Df|$ (and recalling that $q_1=2p_1>2$), we use \eqref{e:uKtris} to obtain 
\begin{align}\label{e:I2 2}
\int_{Z}\left((|D\f|\,{\bf 1}_{K})*\ph_{\eps}\right)^2&
\leq |Z|^{\frac{p_1-1}{p_1}}
\|(|D\f|\,{\bf 1}_{K})*\ph_{\eps}\|_{L^{q_1}}^2\leq C\, 
\left(\frac{\eps}{\delta^{\bar{n}Q+1}}\right)^{\frac{2\,(p_1-1)}{p_1}} \||Df'|\|_{L^{q_1} (K)}^2\notag\\
&\leq C\,\left(\frac{\eps}{\delta^{\bar{n}Q+1}}\right)^{\frac{2\,(p_1-1)}{p_1}}\,(E + \bA^{2})\,.
\end{align}
Gathering all the estimates together, 
\eqref{e:C}, \eqref{e:I1 3}, \eqref{e:I2} and
\eqref{e:I2 2} give
\begin{equation}\label{e:C2}
\int_{B_{r_1}}|D\g|^2\leq
\int_{B_{r_2}\cap K}|D\f|^2
+C\Big(E \delta^{8^{-\bar{n}Q-1}}+
\frac{E^{\sfrac{3}{2}-{\beta_1}}}{\eps^{\sfrac{m}{2}}}+
(E + \bA^2) \left(\frac{\eps}{\delta^{\bar{n}Q+1}}\right)^{\frac{2\,(p_1-1)}{p_1}}\Big).
\end{equation}
On the other hand, for what concerns $g_2$ we can estimate as follows
\begin{align}
&\int_{B_{r_1}} |D g_2|^2  =  \int_{B_{r_1}} |D f_2|^2 + \sum_{i}\int_{B_{r_1}} (D g_2^i-Df_2^i)\cdot
(D g_2^i+Df_2^i)\notag\allowdisplaybreaks\\
\leq & \int_{B_{r_1}\cap K} |D f_2|^2 + \int_{B_{r_1}\setminus K} |D f_2|^2
+ C\, \big(\bA+E^\frac12\big)\sum_{i}\int_{B_{r_1}} |D g_2^i -Df_2^i|\label{e:Riemanniano_intermedio}
\end{align}
We already observed that $|Df_2| \leq C (\bA + E^{\sfrac{1}{2}})$, leading to the estimate
$\int_{K^c} |Df_2|^2 \leq C (\bA^2 +E) |K^c| \leq C (\bA^2 +E) E^{1-2\beta_1}$. As for the latter
summand we compute
\begin{align*}
|D g_2^i -Df_2^i| \leq & |D_x\Psi(x, g_1^i) - D_x\Psi(x, f_1^i)|\notag\\ 
&+ |D_u\Psi(x,g_1^i(x))Dg_1^i| +
 |D_u\Psi(x,f_1^i(x))Df_1^i|\notag\\
\leq & C \bA\cG(g_1, f_1)
+ C\,\big(\bA+E^{\sfrac{1}{2}}\big)\,E^{\beta_1}\, .
\end{align*}
We next estimate $\|\cG (g_1, f_1)\|_\infty \leq C \g'- f'\|_\infty$ and
\begin{align*}
\|\g-\f\|_{\infty} &\leq
C\,\sqrt{E} \left(\left\|\ro^*\left(\textstyle{\frac{\f}{\sqrt{E}}} * \varphi_\eps\right) - \ro^*\left(\textstyle{\frac{\f}{\sqrt{E}}}\right) \right\|_{\infty} + 
\left\|\ro^*\left(\textstyle{\frac{\f}{\sqrt{E}}}\right)-\textstyle{\frac{\f}{\sqrt{E}}}\right\|_{\infty}\right)\notag\\
& \leq C\,\Lip(\ro^*) \, \| \f * \varphi_\eps - \f\|_{L^\infty} + C\,E^{\sfrac12} \delta^{8^{-\bar n Q}}
\leq C\, E^{\beta_1} \, \eps + C\,E^{\sfrac12} \delta^{8^{-\bar{n} Q}}\leq C E^{\beta_1}\, .
\end{align*}
We therefore conclude
\begin{equation}\label{e: C riemanniano}
\int_{B_{r_1}} |D g_2|^2 \leq \int_{B_{r_1}\cap K} |D f_2|^2 + C (\bA^2 +E ) E^{\beta_1}\, .
\end{equation}

\medskip

{\it Final estimate.} Since $|Dg|^2 = |Dg'|^2 + |Dg_2|^2$, summing \eqref{e:A}, \eqref{e:A riemanniano}, \eqref{e:B}, \eqref{e:B riemanniano}, \eqref{e:C2}  and \eqref{e: C riemanniano} (and recalling $\eps<s$), we conclude
\begin{align*}%\label{e:energy v}
\int_{B_{r_3}}|Dg|^2 \leq{}&
\int_{B_{r_1}\cap K}|Df|^2+
C\int_{B_{r_1+3s}\setminus B_{r_1-s}}|D\f|^2
+ C (\bA +E^2) (s+ E^{\beta_1})
\\
&+C\,E\, \bigg(
\delta^{8^{-\bar n Q -1}}+
\frac{\eps^2}{s^2}+
\frac{\delta^{2\cdot 8^{-\bar{n}Q}}}{s}+
\frac{E^{\sfrac{1}{2}-{\beta_1}}}{\eps^{m/2}}+
\left(1
+ \bA^2\,E^{-1}\right)\left(\frac{\eps}{\delta^{\bar{n}Q+1}}\right)^{\frac{2\,(p_1-1)}{p_1}} \bigg)\, .
\end{align*}
We set $\eps=E^a$, $\delta=E^b$ and $s=E^c$, where 
\begin{equation*}%\label{e:choice}
a=\frac{1-2\,{\beta_1}}{2\,m},\quad b=\frac{1-2\,{\beta_1}}{4\,m\,(\bar{n}\,Q+1)}
\quad\textrm{and}\quad c=\frac{1-2\,{\beta_1}}{8^{\bar{n}Q}\, 4m\,(\bar{n}\,Q+1)}.
\end{equation*}
This choice respects \eqref{e:scelte}. Assume $E$ is small enough so that $s\leq \frac{1}{16}$.
Now, if $C>0$ is a sufficiently large constant,
there is a set $B'\subset[\sfrac{9}{8},\frac{29}{16}]$ with $|B'|> 1/2$ such that,
\begin{equation*}
\int_{B_{r_1+3s}\setminus B_{r_1-s}}|D\f|^2\leq C\,s
\int_{B_2}|D\f|^2\leq C\,E^{1+c}
\quad \mbox{for every $r_1\in B'$.}
\end{equation*} 
Indeed by integrating in polar coordinates and by Fubini's Theorem we have that
\begin{align*}
\int_{\frac98}^{\frac{29}{16}} dr \int_{B_{r+3s}\setminus B_{r-s}} |D\f|^2 &=
\int_{\frac98}^{\frac{29}{16}} dr \int^{r+3s}_{r-s}dt \int_{\de B_t} |D\f|^2 d\cH^{n-1} \\
&\leq 4\, s \int_{\frac98-s}^{2} dt \int_{\de B_t} |D\f|^2 d\cH^{n-1} \leq 4 \, s\int_{B_2}|D\f|^2,
\end{align*}
from which the conclusion follows for $C$ big enough:
\begin{align*}
&\left\vert\left\{ r \in \left[\frac98,\frac{29}{16}\right] : \int_{B_{r+3s}\setminus B_{r-s}} |D\f|^2 \geq C\,s
\int_{B_2}|D\f|^2\right\} \right\vert\\
&\quad\leq \frac{1}{C\,s\int_{B_2}|D\f|^2} \int_{\frac98}^{\frac{29}{16}} dr \int_{B_{r+3s}\setminus B_{r-s}} |D\f|^2
\leq \frac{4}{C} < \frac18.
\end{align*}
For $\sigma = r_3\in B = 2s + B'$ we then conclude, for some $\gamma (\beta_1, \bar{n}, n,m, Q)>0$,
\begin{align*}%\label{e:energy v}
\int_{B_\sigma}|Dg|^2 \leq{}&
\int_{B_\sigma\cap K}|Df|^2 + C E^\gamma (E + \bA^2)\, .\qquad\qquad\qquad\qedhere
\end{align*}
\end{proof}

\subsection{Proof of Theorem \ref{t:higher}}
Choose $\beta_1= \frac{1}{4m}$ and consider the set $B\subset[\sfrac98,2]$ given in Proposition \ref{p:conv}.
Using the coarea formula and the isoperimetric inequality
(the argument and the map $\ph$ are the same in the proof of Theorem~\ref{t:o(E)} and that of
Proposition~\ref{p:o(E)}),
we find $s\in B$ and an integer rectifiable current $R$ such that
\[
\de R=\la T-\mathbf{G}_f,\ph, s\ra \quad\text{and}\quad
\mass(R)\leq CE^{\frac{2m-1}{2m-2}}.
\]
Since $g\vert_{\de B_s}=f\vert_{\de B_s}$ and $g$ takes values in $\Sigma$, 
we can use $g$ in place of $f$ in the estimates and, arguing as before (see e.g. the proof of Theorem \ref{p:o(E)}), we get, for a suitable $\gamma>0$:
\begin{equation}\label{e:massa1}
\|T\| ( \bC_s)\leq
Q\,|B_s|+\int_{B_s}\frac{|Dg|^2}{2}+CE^{1+\gamma}
\;\stackrel{\mathclap{\eqref{e:conv}}}
{\leq}\;Q\,|B_s|+
\int_{B_s\cap K}\frac{|Df|^2}{2}+CE^\gamma (E
+ \bA^2)\, .
\end{equation}
On the other hand, by Taylor's expansion in Remark~\ref{r:taylor},
\begin{align}\label{e:massa2}
\| T\| (\bC_s)&=\|T\| ((B_s\setminus K)\times \R^{n})+
\|\mathbf{G}_f\| ((B_s\cap K)\times \R^n)\notag\\
&\geq \|T\| ((B_s\setminus K)\times \R^{n})+Q\, |K\cap B_s|+
\int_{K\cap B_s}\frac{|Df|^2}{2}-C\, E^{1+\gamma}.
\end{align}
Hence, from \eqref{e:massa1} and \eqref{e:massa2}, we get $\e_T(B_s\setminus K)\leq
C\, E^{\gamma}\,(E+\bA^2)$.

This is enough to conclude the proof. Indeed, let $A\subset B_{\sfrac98}$ be a Borel set.
Using the higher integrability of $|Df|$ in $K$ (see \eqref{e:uKtris}) and possibly selecting a smaller $\gamma>0$, we get
\begin{align*}
\e_T(A)&\leq\e_T(A\cap K)+\e_T( A\setminus K)\leq
\int_{A\cap K}\frac{|Df|^2}{2}+C\,E^{\gamma}\left(E + \bA^2\right)\notag\\
&\leq C\,|A\cap K|^{\frac{p_1-1}{p_1}}\left(\int_{A\cap K}|Df|^{q_1}\right)^{\sfrac{2}{q_1}}
+C\,E^{\gamma}\left(E + \bA^2\right)\\
&\leq 
C\, |A|^{\frac{p_1-1}{p_1}}\, \left(E + \bA^2\right)
+ C\, E^{\gamma}\left(E + \bA^2\right).
\end{align*}

\subsection{Proofs of Theorems \ref{t:main} and \ref{t:harmonic_final}} 
As usual we assume, w.l.o.g., $r=1$ and $x=0$. 
Choose ${\beta_{11}}<\min \{\frac{1}{2m},\frac{\gamma_{11}}{2(1+\gamma_{11}
)}\}$,
where $\gamma_{11}$ is the constant in Theorem~\ref{t:higher}.
Let $f$ be the $E^{\beta_{11}}$-Lipschitz approximation of $T$.
Clearly \eqref{e:main(i)} follows directly from Proposition~\ref{p:max} if $\gamma_1<{\beta_{11}}$.
Set next $A:= \left\{\bmax\be_T> 2^{-m}E^{2\,{\beta_{11}}}\right\}\cap B_{\sfrac98}$.
By Proposition \ref{p:max}, $|A|\leq C E^{1-2{\beta_{11}}}$.
If $\eps_{1}$ is sufficiently small, apply \eqref{e:max1} and estimate \eqref{e:higher2} to $A$
to conclude:
\begin{equation*}%\label{e:ii}
|B_1\setminus K|\leq C\, E^{-2\,{\beta_{11}}}\,\e_T\left(A\right)
\leq C\, E^{\gamma_{11} - 2\beta_{11}(1+\gamma_{11})} (E+\bA^2).
\end{equation*}
By our choice of $\gamma_{11}$ and ${\beta_{11}}$, this gives \eqref{e:main(ii)}
for some positive $\gamma_1$. Finally, set $S=\mathbf{G}_f$.
Recalling the strong Almgren estimate \eqref{e:higher2} and the 
Taylor expansion in Remark~\ref{r:taylor}, we conclude: for every $0<\sigma\leq1$
\begin{gather*}%\label{e:iii}
\left| \|T\| (\bC_{\sigma}) - Q \,\sigma^m\,\omega_m -\int_{B_{\sigma}} \frac{|Df|^2}{2}\right|
\leq \e_T(B_{\sigma}\setminus K)+\e_S (B_{\sigma}\setminus K)+\left|\e_S (B_{\sigma})-\int_{B_{\sigma}}\frac{|Df|^2}{2}\right|\notag\\
\leq  C\, E^{\gamma_{11}}(E+\bA^2)+C\, |B_{\sigma}\setminus K|+C\,\Lip(f)^2\int_{B_{\sigma}}|Df|^2
\leq C\,E^{\gamma_{1}} (E+\bA^2).
\end{gather*}
The $L^\infty$ bound follows from Proposition \ref{p:max} recalling that, by Remark \ref{r:Psi}, 
we can assume $\|D \Psi\|_0 \leq C (E^{\sfrac12}+ \bA)$. Finally,
Theorem~\ref{t:harmonic_final} is a special case of Theorem~\ref{t:o(E)},
since the map $f$ in Theorem~\ref{t:main} is the $E^{\gamma_1}$-Lipschitz approximation of $T$.

\section{The ``almost'' projections $\ro^\star_\delta$}\label{s:ro*}
In this section we show the existence of the maps $\ro^\star_\delta$
in Proposition~\ref{p:ro*}.
Compared to the original ones introduced by Almgren, our $\ro^\star_\delta$'s
have the advantage of depending on a single parameter. Our proof
is different from Almgren's and gives more explicit estimates, relying heavily on the following
simple corollary of
Kirszbraun's Theorem. 

\begin{lemma}\label{l:kirsz}
Let $f: \Omega \subset \R^{N_1} \to C\subset \R^{N_2}$ be a Lipschitz function and
assume that $C$ is closed and convex.
Then, there is an extension $\hat{f}$ of $f$ to the whole $\R^{N_1}$
which preserves the Lipschitz constant and takes values in $C$.
\end{lemma}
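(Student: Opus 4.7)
The plan is to combine Kirszbraun's theorem with the nearest-point projection onto the convex target. Classical Kirszbraun gives us a first extension, which in general will leave $C$; the projection step fixes this without inflating the Lipschitz constant, thanks to convexity.

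More concretely, first I would apply Kirszbraun's theorem to $f$ viewed as a map from $\Omega \subset \R^{N_1}$ into $\R^{N_2}$: this yields an extension $\tilde f \colon \R^{N_1} \to \R^{N_2}$ with $\Lip(\tilde f) = \Lip(f)$. Next, I would invoke the standard fact that, since $C$ is a closed convex subset of the Hilbert space $\R^{N_2}$, the nearest-point projection $P_C \colon \R^{N_2} \to C$ is well-defined and $1$-Lipschitz (this follows from the characterization $\langle z - P_C(z), c - P_C(z)\rangle \leq 0$ for every $c \in C$, applied to the two points one projects).

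Finally, I would set $\hat f := P_C \circ \tilde f$. Then $\hat f$ takes values in $C$ by construction, and
\[
\Lip(\hat f) \leq \Lip(P_C)\,\Lip(\tilde f) \leq \Lip(f).
\]
Moreover, since $f(x) \in C$ for $x \in \Omega$, we have $P_C(f(x)) = f(x)$, so $\hat f|_\Omega = f$; in particular $\Lip(\hat f) \geq \Lip(f)$, giving equality. There is essentially no obstacle here: the only delicate ingredient is Kirszbraun's theorem itself, which is the content one is asked to "lift" to a convex-valued setting, and the role of convexity is precisely to supply the $1$-Lipschitz retraction $P_C$ needed to land back in $C$ without losing the extension property.
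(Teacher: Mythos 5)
Your proposal is correct and is exactly the paper's argument: apply Kirszbraun's theorem to get $\tilde f$ and compose with the $1$-Lipschitz nearest-point projection onto the closed convex set $C$, noting that the projection fixes $C$ pointwise so the restriction to $\Omega$ is unchanged.
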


To prove Lemma \ref{l:kirsz} it suffices to take the map $\tilde{f}$ of the classical statement of Kirszbraun's theorem (see
\cite[Theorem 2.10.43]{Fed}) which takes values in $\R^{N_2}$ and compose it with the orthogonal projection $\pi_C$ onto 
the convex closed set $C$,
which is a $1$-Lipschitz map in $\R^{N_2}$.    

\begin{proof}[Proof of Proposition~\ref{p:ro*}]
The proof consists of four parts: the first one is a detailed description of the
set $\cQ$, whereas the remaining three give a rather explicit construction in this
order:
\begin{enumerate}
\item first we specify $\ro^\star_\delta$ on $\cQ$: the resulting map will be called $\ro^\flat$;
\item then we extend it to a map $\ro^\sharp$ on $\cQ_{\delta^{nQ+1}}$, the $\delta^{nQ+1}$-neighborhood
of $\cQ$; $\ro^\sharp$ will satisfy $\Lip (\ro^\sharp) \leq 1+ C \delta^{8^{-\bar{n}Q-1}}$ and
$|\ro^\sharp (P) - P|\leq C \delta^{8^{-\bar{n}Q}}$ for every $p\in \cQ$;
\item  we then extend it to all $\R^{N}$
keeping its Lipschitz constant bounded.
\end{enumerate}
(3) follows easily from (2): we consider $\xii^{-1}\circ\ro^\sharp:\cQ_{\delta^{\bar{n}Q+1}}\to\Iq$ and
a Lipschitz extension $h:\R^{N}\to \Iq$ of it with $\Lip(h)\leq C$, using \cite[Theorem 1.7]{DS1}.
Our map is then $\ro^\star_\delta :=\xii\circ h$.
Then \eqref{e:ro*1} is an easy consequence of (2), (3) and the chain rule.

The description of $\cQ$ and the proofs of (1) and (2) are given in the next subsections.
\end{proof}

From now on we use $n$ instead of $\bar{n}$
to simplify the notation.

\subsection{Conical simplicial structure of $\cQ$}
We first prove that $\cQ$ is the union of families $\{\cF_i\}_{i=0}^{nQ}$ of sets, 
the ``$i$-dimensional faces'' of $\cQ$, with the following properties:
\begin{itemize}
\item[(p$1$)] $\cQ=\cup_i%_{i=0}^{nQ}
\cup_{F\in\cF_i} F$;
\item[(p$2$)] $\cF:=\cup_i \cF_i$ is a collection of finitely many disjoint sets;
\item[(p$3$)] each face $F\in\cF_i$ is a convex {\em open} $i$-dimensional cone,
where open means that for every $x\in F$ there exists an $i$-dimensional disk $D$ with
$x\in D\subset F$;
\item[(p$4$)] for each $F\in \cF_i$, $\bar F\setminus F$ is the union of some
elements of $\cup_{j<i} \cF_j$.
\item[(p$5$)] for each $i<k \leq nQ$ and for each $F\in \cF_i$, there exists
$G \in \cF_k$ such that $ F\subset \bar G$.
\end{itemize}

\begin{remark}\label{r:piccolo_abuso}
With a small abuse of notation $\partial F$ will denote $\overline{F}\setminus F$ for any $F\in \mathcal{F}$.
\end{remark}

So, $\cF_0=\{0\}$; $\cF_1$ consists of finitely many half-lines meeting at $0$, i.e. of sets of type
$l_v=\{\lambda \,v\,:\:\lambda\in]0,+\infty[\}$ for $v\in \s^{N-1}$;
$\cF_2$ consists of finitely many $2$-dimensional ``infinite triangles'' delimited by
pairs of half lines $l_{v_1}$, $l_{v_2}\in\cF_1$ and by $\{0\}$;
and so on.
To prove this statement, first of all we recall the construction of
$\xii$ (see \cite[2.1.2]{DS1}).
After selecting a suitable finite collection of non zero vectors
$\{e_k\}_{k=1}^{h}$ (in general $h > n$),
we define the linear map $L:\R^{n\,Q}\to \R^{N}$ with $N := h\, Q>n\,Q$ given by
\[
L(P_1,\ldots, P_Q):=\big(\underbrace{P_1\cdot e_1,\ldots,P_Q\cdot e_1}_{w^1},
\underbrace{P_1\cdot e_2,\ldots,P_Q\cdot e_2}_{w^2},
\ldots,\underbrace{P_1\cdot e_h,\ldots,P_Q\cdot e_h}_{w^h}\big)\,.
\]
Then, we consider the map $O:\R^{N}\to\R^{N}$ which maps $(w^1\ldots,w^h)$
into the vector $(v^1,\ldots,v^h)$
where each $v^i$ is obtained from $w^i$ ordering its components in
increasing order.
Note that the composition $O\circ L:\left(\R^{n}\right)^Q\to\R^{N}$ is now
invariant under the action of the symmetric group $\Pe_Q$.
Therefore, $\xii$ is simply the induced map on $\Iq=(\R^{n})^Q/\Pe_Q$
and $\cQ=\xii(\Iq)=O(V)$ where $V:=L(\R^{n\,Q})$. Moreover, since the
vectors $e_i$'s span $\R^n$ (cf. \cite[2.1.2]{DS1}), the map $L$ is injective 
and thus $V$ is an $nQ$-dimensional subspace.

Consider the following equivalence relation $\sim$ on $V$:
\begin{equation}\label{e:equi}
(w^1,\ldots,w^h)\sim (z^1,\ldots,z^h)\qquad \textrm{if}\qquad
\begin{cases}
w^i_j=w^i_k &\Leftrightarrow z^i_j=z^i_k\\
w^i_j>w^i_k &\Leftrightarrow z^i_j>z^i_k
\end{cases}
\quad\forall\;i,j,k\,,
\end{equation}
where $w^i=(w^i_1,\ldots,w^i_Q)$ and $z^i=(z^i_1,\ldots,z^i_Q)$: if $w\sim z$, then
$O$ rearranges their components with the same permutation.
We let $\cE$ denote the set of corresponding equivalence classes in $V$
and $\cC:=\{L^{-1}(E)\,:\,E\in\cE\}$.
The following fact is an obvious consequence of definition \eqref{e:equi}:
\begin{equation*}
L(P)\sim L(S)\quad\text{if and only if}\quad
L(P_{\pi(1)},\ldots, P_{\pi(Q)})\sim L(S_{\pi(1)},\ldots, S_{\pi(Q)})\quad\forall\;\pi\in\Pe_Q\,.
\end{equation*}
Thus, $\pi (C)\in \mathcal{C}$
for every $C\in\cC$ and every $\pi\in\Pe_Q$. 
Since $\xii$ is injective and is induced by $O\circ L$, it follows that, for every pair $E_1,\,E_2\in\cE$,
either $O(E_1)=O(E_2)$ or $O(E_1)\cap O(E_2)=\emptyset$. Therefore,
the family $\cF:=\{O(E)\,:\,E\in\cE\}$ is a partition of $\cQ$.

Clearly, each $E\in\cE$ is a convex cone.
Let $i$ be its dimension and $D$ any $i$-dimensional disk
$D\subset E$. Denote by $x$ the center of $D$ and let $y$ be any other point of $E$.
Then, by \eqref{e:equi}, the point $z= y - \eps (x-y) = (1+\eps)\,y-\eps \, x$ belongs as well to $E$ for any $\eps>0$ sufficiently small. 
The convex envelope of $D\cup\{z\}$, which is contained in $E$, contains in turn an $i$-dimensional disk
centered in $y$: therefore $E$ is an open convex cone.
Since $O\vert_E$ is a linear injective map, $F=O(E)$ is an open convex cone of dimension $i$.
Therefore, $\cF$ satisfies (p$1$)-(p$3$).

Next notice that, having fixed $w\in E$, a point $z$ belongs to $\bar E\setminus E$ if and only if
\begin{enumerate}
\item $w^i_j\geq w^i_k$ implies $z^i_j\geq z^i_k$ for every $i,\,j$ and $k$; 
\item there exists $r,\,s$ and $t$ such that $w^r_s> w^r_{t}$
and $z^r_s= z^r_t$.
\end{enumerate}
Thus, if $d$ is the dimension of $E$, $\partial E := \overline{E}\setminus E$ (cf. Remark \ref{r:piccolo_abuso})
is the union of some elements of $\cup_{j<d} \cE_j$, where with $\cE_j$ we denote the $j$-dimensional elements of $\cE$. Observe that, since $O$ is continuous, we must have $\overline{F} \supset O (\overline{E})$.
On the other hand, if $x\in \overline{F}$ and $x_k\to x$ is a sequence contained in $F$, then
there is a sequence $\{y_k\}\subset E$ with $O (y_k) = x_k$. By the definition of $O$ the sequence $\{y_k\}$
is bounded and hence, up to subsequence, we can assume that it converges to $y\in \overline{F}$: thus $O(y) =x$
and $O (\overline{E}) = \overline{F}$. On the other hand, for equivalence classes $E_1, E_2$ of different dimension
we necessarily have $O (E_1)\cap O (E_2) = \emptyset$. Thus $O (\partial E) \cap O (E) =\emptyset$, i.e.
$\partial F= O (\partial E)$, which shows (p$_4$).

For what concerns (p$_5$) we show first that if $L(P)=z \in E \in \mathcal{E}$ is such that
$z^i_j \neq z^i_k$ for all $i$ and for all $j\neq k$, then $O(E) \in \cF_{nQ}$.
Indeed, if $t < \sfrac14 \min_{i,j\neq k} |z^i_j - z^i_k|$, then 
$L(P+v) \in E$ for every $v \in B_t(0) \subset \R^{nQ}$, i.e.~$E$ is an $(nQ)$-dimensional
convex cone.
Therefore it follows that for every $F \in \cF_i$ with $i < nQ$
there exists $G \in \cF_{nQ}$ such that $F \subset \bar G$.
To show this claim it is enough to prove that, if $F = O(E)$ and $L(P)=z \in E$, then
$z$ is the limit of points $w \in V$ such that $w^i_j \neq w^i_k$
for all $i,j,k$, which can be easily proved by a simple perturbation argument.
Next, we argue inductively on $k$: knowing that $F\in \mathcal{F}_i$ is contained in $\overline{G}$ for some
$G\in \mathcal{F}_k$ with $k>i+1$, we show that there is $H\in \mathcal{F}_{k-1}$ such that
$F\subset \overline{H}$. Observe indeed that $F\subset \partial G = \overline{G}\setminus G$ and that,
for dimensional reasons, $\overline{G}\setminus G$ must be contained in the closure of those $H\in \mathcal{F}_{k-1}$ such that $H\subset \overline{G}$. Let $H\in \mathcal{F}_{k-1}$ be such that $F\cap \overline{H}\neq \emptyset$. Consider $E, K\in \mathcal{E}$ such that $F = O(E)$ and $H = O (K)$. Let $x\in E$ such that $O(x)\in F\cap \overline{H}$ and $z\in K$. We then must have that $x^i_k \geq x^i_j$ whenever $z^i_k> z^i_j$ and that $x^i_k = x^i_j$ whenever $z^i_k = z^i_j$. By the very definition of $\sim$, the same property holds even if we replace $x$ with another element $\xi\in E$. Therefore the open segment $]\xi, z[$ must be contained in $K$, which in turn implies that $\xi\in \overline{K}$. Thus we conclude $F\subset \overline{H}$.

\subsection{Construction of $\ro^\flat$}
The main building block in the construction of $\ro^\flat$ 
is given by the following lemma.

\begin{lemma}\label{l:radial}
For $\tau\in ]0,\frac{1}{4}[$ and any $D\in \N\setminus \{0\}$ consider the map $\Phi_\tau : \R^D\to \R^D$ defined by:
\[
\Phi_\tau (x) = \left\{
\begin{array}{ll}
0 &\mbox{if $|x|\leq \tau$}\\
\sqrt{\tau}\frac{|x|- \tau}{\sqrt{\tau}-\tau} \frac{x}{|x|}\quad &\mbox{if $\tau \leq |x|\leq \sqrt{\tau}$}\\
x &\mbox{if $|x|\geq \sqrt{\tau}$}. 
\end{array}\right. 
\]
Then $|\Phi_\tau (x)-x|\leq \tau$ and $\Lip (\Phi_\tau)\leq 1+ 2 \sqrt{\tau}$.
\end{lemma}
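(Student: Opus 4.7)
The plan is to reduce everything to a one-dimensional radial analysis. Write $\Phi_\tau(x) = g(|x|)\, x/|x|$ (with $\Phi_\tau(0)=0$) where $g:[0,\infty)\to[0,\infty)$ is the continuous piecewise-linear profile
\[
g(r)=\begin{cases}0 & r\leq \tau,\\ \sqrt{\tau}\,\dfrac{r-\tau}{\sqrt{\tau}-\tau} & \tau\leq r\leq \sqrt{\tau},\\ r & r\geq \sqrt{\tau}.\end{cases}
\]
One immediately checks continuity at the two breakpoints, so $\Phi_\tau$ is continuous on $\R^D$ and smooth off the spheres $\{|x|=\tau\}$ and $\{|x|=\sqrt{\tau}\}$.

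The distance bound $|\Phi_\tau(x)-x|\leq \tau$ reduces to $|g(r)-r|\leq \tau$ for all $r\geq 0$. For $r\leq\tau$ this is just $r\leq\tau$; for $r\geq\sqrt{\tau}$ both sides are zero; on the middle interval a direct algebraic simplification gives
\[
g(r)-r=\frac{\tau(r-\sqrt{\tau})}{\sqrt{\tau}-\tau},
\]
which has absolute value $\leq \tau$ since $\tau\leq r\leq\sqrt{\tau}$. This disposes of the first claim.

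For the Lipschitz bound I would use the standard formula for the differential of a radial map: off the two critical spheres,
\[
D\Phi_\tau(x)=g'(|x|)\,\frac{x}{|x|}\otimes\frac{x}{|x|}+\frac{g(|x|)}{|x|}\left(I-\frac{x}{|x|}\otimes\frac{x}{|x|}\right),
\]
whose operator norm equals $\max\bigl(|g'(|x|)|,g(|x|)/|x|\bigr)$. On the inner region this is $0$; on the outer region it is $1$; on the middle annulus $g'(r)=1/(1-\sqrt{\tau})$ while $g(r)/r$ is linear in $r$, taking the values $0$ at $r=\tau$ and $1$ at $r=\sqrt{\tau}$, hence $g(r)/r\leq 1$. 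Therefore
\[
\|D\Phi_\tau\|_{\mathrm{op}}\leq \frac{1}{1-\sqrt{\tau}}\qquad\text{a.e. on }\R^D.
\]
Using $\sqrt{\tau}<\tfrac12$, the elementary inequality $(1+2\sqrt{\tau})(1-\sqrt{\tau})=1+\sqrt{\tau}-2\tau\geq 1$ yields $(1-\sqrt{\tau})^{-1}\leq 1+2\sqrt{\tau}$.

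The only (minor) obstacle is that $\Phi_\tau$ is merely piecewise smooth, so one cannot invoke the fundamental theorem of calculus naively on segments crossing the spheres $\{|x|=\tau\}$ and $\{|x|=\sqrt{\tau}\}$. I would handle this by noting that $\Phi_\tau$ is locally Lipschitz on $\R^D$ (with the bound above holding a.e.) and applying the mean value inequality for Lipschitz maps, equivalently by integrating $\|D\Phi_\tau\|_{\mathrm{op}}$ along the straight segment from $x$ to $y$ (almost every such segment avoids the two spheres, and on such a segment $\Phi_\tau$ is absolutely continuous). This gives $|\Phi_\tau(x)-\Phi_\tau(y)|\leq (1+2\sqrt{\tau})|x-y|$ for all $x,y\in\R^D$, completing the proof.
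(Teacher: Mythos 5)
Your proof is correct and follows essentially the same route as the paper: both verify the displacement bound by the direct computation $g(r)-r=\tau(r-\sqrt{\tau})/(\sqrt{\tau}-\tau)$ on the middle annulus, and both bound the a.e.\ differential of the radial map by its largest singular value $(1-\sqrt{\tau})^{-1}\leq 1+2\sqrt{\tau}$ (the paper writes the matrix $D\Phi_\tau$ and reads off its top eigenvalue, which is the same as your $\max(|g'|,g/r)$), then pass to the global Lipschitz bound using convexity of the domain. One small slip worth noting: $g(r)/r=\frac{1-\tau/r}{1-\sqrt{\tau}}$ is not linear in $r$, but it is increasing with value $1$ at $r=\sqrt{\tau}$ (equivalently $g(r)\leq r$ on $[\tau,\sqrt{\tau}]$, as your own formula for $g(r)-r$ shows), so the bound $g(r)/r\leq 1$ and hence the conclusion stand.
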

\begin{proof}
The proofs of the two claims are straightforward computations. First $\Phi_\tau (x) =x$ if $|x|\geq \sqrt{\tau}$
and $|\Phi_\tau (x) -x|=|x|\leq \tau$ if $|x|\leq \tau$. For $\tau\leq |x|\leq \sqrt{\tau}$ we compute
\begin{align*}
|\Phi_\tau (x)-x| &=
 \left|\textstyle{\frac{\sqrt{\tau}\,(|x|-\tau)}{\sqrt{\tau} - \tau}} -|x|\right| = \tau \textstyle{\frac{\sqrt{\tau} - |x|}{\sqrt{\tau} - \tau}}\leq \tau.
%  \left|\left(\textstyle{\frac{\sqrt{\tau}}{\sqrt{\tau} - \tau}} -1\right) x - 
%\textstyle{\frac{\tau\sqrt{\tau}}{\sqrt{\tau} - \tau}} \textstyle{\frac{x}{|x|}}\right| \leq \textstyle{\frac{\sqrt\tau}{1-\sqrt\tau}}
%|x| + \textstyle{\frac{\tau}{1-\sqrt\tau}} \leq 4 \tau\, .
\end{align*}
Next we show that $|D\Phi_\tau (x) \cdot v| \leq (1+ 2 \sqrt{\tau}) |v|$ at
any point of differentiability. This inequality
obviously imply the claimed Lipschitz constant estimate
because $\Phi_\tau$ is Lipschitz and its domain of definition is a convex set.
The inequality is, moreover, obvious when $|x|< \tau$ and $|x|> \sqrt{\tau}$.
For $\tau < |x|< \sqrt{\tau}$, we can compute
\[
D\Phi_\tau (x) = \frac{1-\frac{\tau}{|x|}}{1-\sqrt{\tau}} {\rm Id} + \frac{\frac{\tau}{|x|}}{1-\sqrt{\tau}} \frac{x}{|x|}\otimes
\frac{x}{|x|}\, .
\]
The matrix is symmetric with positive eigenvalues (because $|x|> \tau$) and the maximal eigenvalue is $(1-\sqrt{\tau})^{-1} \leq 1 + 2\sqrt{\tau}$,
thereby proving our claim.
\end{proof}

\subsubsection{Special coordinates, conical sections and separation} 
Let $S_k$ be the $k$-dimensional skeleton of $\cQ$, i.e. the union of
$F\in \mathcal{F}_k$ and denote by $(S_k)_\sigma$ its $\sigma$-neighborhood $\{x: \dist (x, S_k) < \sigma\}$.
Incidentally, $(S_k)_\sigma$ contains $(S_i)_\sigma$ for every $i<k$. 

\begin{definition}[Coordinates and conical sections]\label{d:coordinates}
Fix any face $F\in \mathcal{F}_k$ and introduce Cartesian coordinates 
$(y,z) \in \R^k\times \R^{N-k}$ in such a way that $F \subset \R^k\times \{0\}$.
For a positive constant $\tilde{c}$ consider the cone $\mathscr{C} (F) := \{(y,z)\in \cQ: (y,0)\in F\, , |z|\leq \tilde{c}\, \dist ((y,0), S_{k-1})\}$.
For any $p= (y, 0)\in F$ we set $V_p:= (\{y\} \times \R^{N-k})\cap \mathscr{C} (F)$.
%$\{q: \exists z \;\mbox{with}\;(y,z)\in \mathscr{C} (F)\}$.
\end{definition}

Note that, if $\tilde{c}$ is sufficiently small, we will have the following property
\begin{equation*}%\label{e:separation}
\mathscr{C} (F) \cap \mathscr{C} (G) \neq \emptyset \quad\Longrightarrow\quad \mbox{either $F\subset \overline{G}$ or $G\subset \overline{F}$.}
\end{equation*}
For every constants $a,b>0$, $k=1\ldots,nQ-1$ and $F\in\cF_k$,
we fix coordinates as in Definition \ref{d:coordinates} and denote by $F_{a, b}$ the sets
\[
F_{a, b}:=\big\{(y,z)\,: |z|\leq a, (y, 0)\in F\setminus (S_{k-1})_b\}.
\]
For the faces $F\in\cF_{nQ}$ of maximal dimension and for every $a>0$, $F_{\star,a}$ denotes the set
$F_{\star,a}:=F \setminus (S_{nQ-1})_a$.
The following lemma is an obvious corollary of the linear simplicial and conical structures of $\cQ$.

\begin{lemma}\label{l:separation}
There is a constant $\bar{c}>0$ (independent of $a,b$ below)
with the following property. Assume $F$ and $G$ are two distinct $k$ dimensional faces.
\begin{itemize}
\item If $k=nQ$, $a>0$, $x\in F_{\star,a}$ and $x'\in G_{\star,a}$, then $|x-x'|\geq \bar{c} a$;
\item If $k<nQ$, $b/a > \bar{c}^{-1}$, $x\in F_{a,b}$ and $x'\in G_{a,b}$, then $|x-x'|\geq \bar{c} b$.  
\end{itemize}
Moreover, if $F\in \mathcal{F}_k$, $H\in \mathcal{F}_i$ with $i>k$ and $F\not\subset \partial H$ (cf. Remark
\ref{r:piccolo_abuso}), then $|x-x'|\geq \bar{c} a$ for every $x\in H$ and $x'\in F\setminus (S_{k-1})_a$.
\end{lemma}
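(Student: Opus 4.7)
My plan is to exploit two structural features of $\cQ$: (i) each face $F$ is a convex cone from the origin, so all the sets $F_{a,b}$, $F_{\star,a}$ scale as $F_{\lambda a,\lambda b}=\lambda F_{a,b}$, and (ii) there are only finitely many faces. Together these will let me prove each inequality with a positive constant depending only on the pair of faces involved, then take the minimum over the finite collection to obtain $\bar c$.

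The first claim is the easiest. A top-dimensional face $F$ lies in and is relatively open inside the $nQ$-dimensional subspace $V=L(\R^{nQ})$; hence for $x\in F_{\star,a}\subset V$ and $x'\in G_{\star,a}\subset V$, the segment $[x,x']$ lies in $V$, starts in $F$, and ends outside $F$ (by (p$_2$), since $F\neq G$), so it must exit $F$ through $\partial F\subset S_{nQ-1}$ (by (p$_4$)). This immediately yields $|x-x'|\geq \dist(x,S_{nQ-1})\geq a$, and $\bar c=1$ works.

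For the second claim I would reduce to the following key inequality: for any two distinct $k$-faces $F$ and $G$,
\[
\dist(p,G)\geq c_{FG}\,\dist(p,S_{k-1})\qquad \text{for all }p\in F,
\]
with $c_{FG}>0$ depending only on the pair. Granting this, writing $p:=\pi_{V_F}(x)\in F\setminus(S_{k-1})_b$ and $q:=\pi_{V_G}(x')\in G\setminus(S_{k-1})_b$ (so that $|x-p|,|x'-q|\leq a$) yields
\[
|x-x'|\geq|p-q|-2a\geq c_{FG}\,b-2a,
\]
and picking $\bar c$ smaller than $c_{FG}/4$ for every pair and $a<\bar c\,b$ then forces $|x-x'|\geq \bar c\,b$. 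The key inequality itself I would prove by compactness: by conical homogeneity the ratio $\phi(p):=\dist(p,G)/\dist(p,S_{k-1})$ is constant along rays, so it suffices to bound it from below on $F\cap \s^{N-1}$. A minimizing sequence $p_n\to p_\infty\in\bar F\cap \s^{N-1}$ must fall into one of three cases: if $p_\infty\in F$ then $p_\infty\in F\cap \bar G$, impossible by (p$_2$) and (p$_4$); if $p_\infty\in \partial F\setminus \bar G$ then $\phi(p_n)\to +\infty$, a contradiction; and if $p_\infty\in \partial F\cap \bar G$ one is in a delicate $0/0$ regime. The \emph{Moreover} clause is entirely analogous: the hypothesis $F\not\subset\partial H$ combined with (p$_2$) and (p$_4$) forces $F\cap\bar H=\bar F\cap H=\emptyset$, and the same compactness argument gives a lower bound for $\dist(x',H)/\dist(x',S_{k-1})$ over $x'\in F$, yielding $|x-x'|\geq c\,a$.

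The hard part is the third case: the limit $p_\infty$ lies in the common boundary $\bar F\cap \bar G\subset S_{k-1}$, so both $\dist(p_n,G)$ and $\dist(p_n,S_{k-1})$ vanish and the ratio is indeterminate. To handle it I would zoom in at $p_\infty$ and use the polyhedral conical structure: because every face of $\cQ$ is a linear cone through $0$, the face $H\ni p_\infty$ has a well-defined normal slice at $p_\infty$ which carries an induced polyhedral structure in which $F$ and $G$ appear as disjoint open convex cones. Applying the segment argument of the first claim to this transverse slice yields $\dist(p,G)\geq c\,\dist(p,H)$ for $p\in F$ near $p_\infty$, and since $\dist(p,S_{k-1})\leq \dist(p,H)$ this rules out $\phi(p_n)\to 0$.
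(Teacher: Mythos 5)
The paper offers no argument for this lemma (it is asserted to be an obvious consequence of the conical simplicial structure of $\cQ$), so your proposal must stand on its own, and as written it has a genuine gap. The first bullet rests on the claim that every top-dimensional face lies in, and is relatively open inside, the single subspace $V=L(\R^{nQ})$. This is false: a face $F=O(E)$ is the image of $E\subset V$ under the block-permutation that sorts the points of $E$, and different classes are sorted by different permutations, so $F\subset O|_E(V)$, which is in general a \emph{different} $nQ$-dimensional subspace; indeed one checks directly (already for $Q=2$, $n=2$, $h=3$, with the two blocks requiring different sorting permutations) that $O(L(P))\notin V$, so $\cQ\not\subset V$. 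Consequently the segment $[x,x']$ need not stay in any subspace in which $F$ is relatively open, and it can leave $\overline F$ at $x$ itself, transversally to the span of $F$; the conclusion that it exits through $\partial F\subset S_{nQ-1}$ does not follow, and the claimed constant $\bar c=1$ is not to be expected anyway: two top faces meeting along a common $(nQ-1)$-face like the pages of a book with small dihedral angle would violate it, and nothing in your argument uses the finiteness of the faces, which is where the geometric constant must come from.

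The scaling-plus-compactness scheme for the second bullet (and the ``Moreover'' clause) is the right kind of argument, and the reduction to $\dist(p,G)\geq c_{FG}\,\dist(p,S_{k-1})$ on $F$ is fine, but the case you yourself flag as delicate --- a minimizing sequence on $F\cap\s^{N-1}$ converging to a point of $\bar F\cap\bar G$ --- is precisely the generic and essential case (distinct faces always share lower-dimensional boundary faces, and typically do so on the sphere), and it is not actually proved. Your proposed fix invokes an ``induced polyhedral structure on the normal slice'' and then ``the segment argument of the first claim''; but that segment argument is the flawed step above, and in the slice $F$ and $G$ appear as cones of dimension $k-\dim H$ which again do not lie in a common subspace in which one of them is relatively open, so the needed local inequality $\dist(p,G)\geq c\,\dist(p,H)$ is of the same nature as the statement being proved and is left unjustified; moreover local conicality of $\cQ$ at points $p_\infty\neq 0$ (which does hold, but only because each class $E$ is cut out by finitely many linear equalities and strict inequalities, hence polyhedral) and the uniformity of the local constant over $p_\infty$ are used implicitly and never established. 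To close the gap you could exploit this polyhedrality directly: in the linear chart $O|_E$ each pair $\bar F,\bar G$ is a pair of polyhedral cones with $\bar F\cap\bar G\subset \overline{S_{k-1}}$, and a Hoffman-type error bound (or a finite induction over the face poset, peeling off the common boundary faces) gives $\dist(p,\bar G)\geq c\,\dist(p,\bar F\cap\bar G)\geq c\,\dist(p,S_{k-1})$ for $p\in F$, with constants depending only on the finitely many cones; the first bullet and the ``Moreover'' clause then follow by the same estimate.
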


\subsubsection{The domains ${\rm Dom} (f_k)$}
Next we choose constants $c_k := \delta^{8^{-nQ+k}}$. If $\delta$ is small enough, each family 
$\{F_{2\sqrt{c_k},c_{k-1}^2}\}_{F\in\cF_k}$ with $k<nQ$ is made by pairwise disjoint sets, which are at least
$\bar{c} c_{k-1}^2$ far apart, where $\bar{c}$ is the constant of Lemma \ref{l:separation},
and it holds $F_{2\sqrt{c_k},c_{k-1}^2} \subset \mathscr{C}(F) \subset \cQ$.
We are ready to define the map $\ro^\flat := \ro^\star|_{\cQ}$ inductively
``from the top to the bottom''. More precisely we will define a family of maps $\{f_k\}_{k\in \{0, \ldots, nQ\}}$ on domains
${\rm Dom} (f_k) \subset \cQ$ starting from $f_{nQ}$ and ending with $f_0 = \ro^\flat$. We first explicitly define
${\rm Dom} (f_k) :=  \cQ \setminus (S_{k-1})_{c_{k-1}}$ for $k>0$ and ${\rm Dom} (f_0) = \cQ$, and in order to simplify our notation
we then agree that $c_{-1} = \delta^{8^{-nQ-1}}$ and $S_{-1} = (S_{-1})_{c_{-1}} = \emptyset$.
Note that ${\rm Dom} (f_{k+1}) \not\subset {\rm Dom} (f_k)$.
It is obvious that
\begin{equation}\label{e:dominio_induttivo}
{\rm Dom} (f_k) = \Big( {\rm Dom} (f_{k+1}) \cup \bigcup_{F\in \mathcal{F}_k} F_{2\sqrt{c_k}, c_{k-1}^2}\Big) \setminus (S_{k-1})_{c_{k-1}}\, .
\end{equation}
Indeed, if $x\in {\rm Dom} (f_k)\setminus {\rm Dom} (f_{k+1})$ we then must have $\dist (x, S_k) < c_k$ and $\dist (x, S_{k-1})\geq c_{k-1}$. Let $q\in S_k$ be such that $|x-q|< c_k$. Since $\dist (x, S_{k-1}) \geq c_{k-1} > c_k$,
the point $q$ must necessarily belong to a $k$-dimensional face $F$.
Fix coordinates as in Definition \ref{d:coordinates}.
If $x=(y,z)$, we then obviously have $|z|<c_k \leq 2 \sqrt{c_k}$. On the other hand $\dist ((y,0), S_{k-1}) \geq \dist (x, S_{k-1})
- |z| \geq c_{k-1} - c_k > c_{k-1}^2$. This shows that $x\in F_{2\sqrt{c_k}, c_{k-1}^2}$. 

\subsubsection{The maps $f_k$} On ${\rm Dom} (f_{nQ})$ we define $f_{nQ}=\Id$ and specify next the
procedure to define $f_k$ knowing $f_{k+1}$. Along the procedure we claim inductively
the following.

\begin{ipotesi}[Inductive step]\label{ipotesi_induttiva_mappe_bastarde}
The map $f_{k+1}$ has the following three properties.
\begin{itemize}
\item[(a$_{k+1}$)] $\Lip(f_{k+1})\leq 1+C c_{k+1}^{\sfrac{1}{2}}$ and $|f_{k+1}(x)-x|\leq C\, c_{k+1}$.
\item[(b$_{k+1}$)] Consider  $i\leq k+1$, an $i$-dimensional face $F$, the cone $\mathscr{C}(F)$ in Definition~\ref{d:coordinates} and the corresponding coordinates. 
Then, $f_{k+1}$ factorizes on ${\rm Dom} (f_{k+1}) \cap \mathscr{C} (F)$ as
\begin{equation}\label{e:ipotesi_bast_b}
f_{k+1}(y,z)=(y,h^F_{k+1}(y, z))\in\R^{i}\times\R^{N-i}\, .
\end{equation}
\item[(c$_{k+1}$)] For every $G\in \mathcal{F}_i$ with $i\geq k+1$, $f_{k+1}$ maps ${\rm Dom} (f_{k+1})\cap \{x: \dist (x,G)<\delta\}$
into $\overline{G}$. Moreover the restriction of $f_{k+1}$ to $G_{c_i, c_k}$ is the orthogonal projection onto $G$. 
\end{itemize}
\end{ipotesi}

The constants involved depend on $k$ but not on the parameter
$\delta$ and since the process is iterated finitely many times, we will not keep track of such dependence.
Note that $f_{nQ}$ satisfies (a$_{nQ}$), (b$_{nQ}$) and (c$_{nQ}$) trivially, 
because it is the identity map. Given $f_{k+1}$ we next show
how to construct $f_k$.
For every $p\in G\in \cF_{k}$ with $p\notin (S_{k-1})_{c_{k-1}^2}$, 
set coordinates as in Definition \ref{d:coordinates} and consider the cone $W_p := \{(y,z)\in V_p: |z|\leq 2\sqrt{c_k}\}$.
Let now $\Phi_\tau$ be the map of Lemma \ref{l:radial} with $\tau = 2 c_k$.
The function $f_k$ is defined in $W_p$  by
\begin{equation}\label{e:fk}
f_{k}(x)= f_{k} (y,z) := (y, h^F_k (y, z)) :=
\begin{cases}
\left(y,0\right)
& \textrm{for }\; |z|\leq \tau/2=c_k,\\
\left(y, \Phi_\tau (h^F_{k+1} (y, z))\right) &\textrm{otherwise.}
\end{cases}
\end{equation}
If $q\in {\rm Dom} (f_k)$ does not belong to any $W_p$ as above, then
we set $f_{k+1} (q) = f_k (q)$.

Observe that the definition above gives values to $f_k$ on a set which is larger than ${\rm Dom} (f_k)$:
this will be useful to carry on some of the estimates, but we insist that Assumption \ref{ipotesi_induttiva_mappe_bastarde} will only be checked on ${\rm Dom} (f_k)$.

\subsubsection{Well-definition and continuity} Consider a point $q\in {\rm Dom} (f_{k})$. If $q$ is not contained in $F_{2\sqrt{c_k}, c_{k-1}^2}$
for some $k$-dimensional face, then by \eqref{e:dominio_induttivo} it is contained in the domain of $f_{k+1}$ and thus $f_k (q)$ is
defined. If $q$ is contained in $F_{2\sqrt{c_k}, c_{k-1}^2}$ for some $k$-dimensional face, then $q$ belongs to some $W_p$ as above.
Let $q=(y,z)$. If $|z|\leq c_k$, then $f_k (q)$ is defined;
otherwise, since $\dist (q, S_k) \geq c_k$, we infer that $q\in
{\rm Dom} (f_{k+1})$ and $f_k(q)$ is also defined.

As for the continuity, fix $(y,z)\in W_p\cap {\rm Dom} (f_k)$ with $p=(y,0)\in F\in \mathcal{F}_k$.
If $|z|= c_k$, then by (a$_{k+1}$) we have $|h^F_{k+1} (y,z)|\leq |z| + C c_{k+1}
\leq \tau/2 + C \tau^8$. For $\delta$ sufficiently small this obviously implies $|h^F_{k+1} (y,z)|\leq \tau$ and thus, by
the definition of $\Phi_\tau$, $\Phi_\tau (h^F_{k+1} (y,z))=0$. On the other hand, if $|z|= 2\sqrt{c_k}$, then 
$|h^F_{k+1} (y,z)|\geq |z|- C c_{k+1} = 2\sqrt{c_k} - C c_k^8 \geq \sqrt{2c_k}$ and thus $\Phi_\tau (h^F_{k+1} (y,z)) = h^F_{k+1} (y,z)$.
Therefore under this assumption we have $f_{k+1} (q)= f_k (q)$.

We next check that $f_k$ maps ${\rm Dom} (f_k)$ into $\cQ$. This is true by induction where $f_k$ coincides
with $f_{k+1}$. Fix therefore a point $q$ in some $W_p\cap {\rm Dom} (f_k)$ with $p\in F\in \mathcal{F}_k$
and let $G$ be the $i$-dimensional face containing $q$ with $i>k$.
Then, $f_{k+1} (q)$ belongs to a face $\overline{G}$, by Assumption \ref{ipotesi_induttiva_mappe_bastarde}.
By the estimate in (a$_{k+1}$) and the assumption (b$_{k+1}$), the face $G$ must 
intersect $\mathscr{C} (F)$ and thus $F\subset \bar G$. Observe that, by the properties of $\Phi_\tau$ and
by the inductive assumption (b$_{k+1})$, $f_k (q)$ is mapped in the segment joining $f_{k+1} (q)$ and $q$ and
thus must belong to $\overline{G}$. 

% Our definition of $f_{k}$ then implies that $f_{k} (q)$ is mapped into the closure of $G$.
    
\subsubsection{The inductive conclusions (c$_k$) and (b$_k$)} The first claim of (c$_k$) is simple to prove: as noticed,
if a point $q\in {\rm Dom} (f_{k})$ belongs also to ${\rm Dom} (f_{k+1})$, then $f_{k}$ maps it into the closure of the face containing $q$.
If the point is not contained in ${\rm Dom} (f_{k+1})$, then it must be contained in the $c_k$-neighborhood of some $k$-dimensional
face $F$ and hence it is mapped into $F$: when this happens $F$ is a portion of the boundary of the face containing $q$.
Next, fix a face $G\in \mathcal{F}_i$.
If $i= k$, by the very definition of $f_k$, we have that the restriction of $f_k$ to ${\rm Dom} (f_{k}) \cap G_{c_k, c_{k-1}}$
is the orthogonal projection onto $G$.
If $i>k$, we actually have that $f_k = f_{k+1}$ on ${\rm Dom} (f_k) \setminus (S_k)_{2\sqrt{c_k}}\supset \cQ \setminus (S_{i-1})_{c_{k-1}}$.

Fix now an $i$-dimensional face $L$ with $i\leq k$, consider coordinates $\R^i\times \R^{n-i}$ as in Definition \ref{d:coordinates}
and the corresponding $\mathscr{C} (L)$. If $q = (y, 0)\in L$, the condition (b$_k$) is equivalent to saying that $V_q 
%= \left(\{y\}\times\R^{N-i}\right)
\cap {\rm Dom} (f_k)$ gets mapped into $\{(y,0)\}\times \R^{N-i}$. Fix a point $\tilde{q}\in V_q$. If $f_{k+1} (\tilde{q})
= f_k (\tilde{q})$ there is nothing to prove. Otherwise it turns out that there is a $k$-dimensional face $F$ such that $\tilde{q}\in \mathscr{C}
(F)$. But then we necessarily have $L\subset \bar F$. So, set coordinates $\R^i\times \R^{k-i}\times \R^{n-k}$ so that 
at the same time $L\subset \R^i\times \{0\}\times\{0\}$ and $F\subset \R^{i}\times \R^{k-i} \times \{0\}$. Thus, $(y, 0,0)$ is the coordinate
of $q$ and $(y,z,w)$ that of $\tilde{q}$. According to our definition of $f_k$, $f_k (\tilde q) = (y,z,w')$ for some $w'$, which indeed
implies the desired claim. 

\subsubsection{$C^0$ estimate} Observe that, for every $x$ where $f_k$ coincides with $f_{k+1}$, we have
$|f_k (x) - x|\leq C c_{k+1} \leq C c_{k}^{8}$. Instead, for any point $x$
where $f_{k}$ is newly defined, we distinguish the following two cases: either $x = (y,z)$ with 
$|z| \leq c_k$, in which case $|f_k (x)-x| \leq c_k$; or $x = (y,z)$ with 
$|z| > c_k$, and then by the estimates of Lemma \ref{l:radial} and the triangle inequality we have
\begin{equation*}%\label{e:tuning1}
|f_k (x)-x|\leq |f_{k+1} (x) -f_k (x)| + |f_{k+1} (x)-x| \leq C c_{k+1} + \tau \leq C c_{k+1} + 2c_k\, .
\end{equation*} 

\subsubsection{Lipschitz estimate} We fix $x,x'\in {\rm Dom} (f_k)$ and, apart from the trivial one $f_k (x)=f_{k+1} (x)$
and $f_k (x')= f_{k+1} (x')$, we  distinguish three cases.

\noindent {\bf Case 1: $x,x'\in G_{2 \sqrt{c_k}, c_{k-1}^2}$ for some $k$-dimensional face $G$.} 
Choosing coordinates as in Definition \ref{d:coordinates}, we set $x=(y,z)$ and $x'=(y',z')$. If both $|z|, |z'|\leq \frac{\tau}{2}$,
then $|f_k (x)- f_k (x')| = |y-y'|\leq |x-x'|$. If $|z|\geq \frac{\tau}{2}$ and $|z'|\geq \frac{\tau}{2}$, then
\begin{align*}
|f_k (x) - f_k (x')|^2 \leq &|y-y'|^2 + (1+ 2 \tau^{\sfrac{1}{2}})^2 |h^F_{k+1} (y,z) - h^F_{k+1} (y',z')|^2\nonumber\\
\leq& (1+  2 \tau^{\sfrac{1}{2}})^2 \left(|y-y'|^2 + |h^F_{k+1} (y,z) - h^F_{k+1} (y',z')|^2\right)\nonumber\\
=& (1+ \sqrt{2c_k})^2 |f_{k+1} (x) - f_{k+1} (x')|^2\leq (1+ \sqrt{2c_k})^2 (1+ C \sqrt{c_{k+1}})^2 |x-x'|^2\, . 
\end{align*}
If $|z|\leq \frac{\tau}{2}$ and $|z'| > \frac{\tau}{2}$, let $\tilde{z}$ be the point
with $|\tilde z| = \frac{\tau}{2}$ on the segment joining $z$ and $z'$, and 
$\tilde{x}= (y, \tilde{z})$. Observe that $f_k (\tilde{x}) = f_k (x) = (y, 0)$
and that $|\tilde{x}-x'|^2 = |y - y'|^2 + |z'-\tilde{z}|^2\leq |y-y'|^2 +|z-z'|^2 \leq |x-x'|^2$. On the other hand we have just shown $|f_k (x') - f_k (\tilde{x})|\leq (1+C c_k^{\sfrac{1}{2}}) |x'-\tilde{x}|$. 

\noindent {\bf Case 2: $x\in F_{2\sqrt{c_k}, c_{k-1}^2}$, $x'\in G_{2\sqrt{c_k}, c_{k-1}^2}$ for
distinct $F,G\in \mathcal{F}_k$.} 
By Lemma \ref{l:separation}, $|x-x'|\geq \bar{c} \,c^2_{k-1}\geq \bar{c} c_k^{\sfrac{1}{4}}$. On the other hand,
we also have, by the $C^0$ estimate,
\[
|f_k (x)-f_k (x')|\leq |x-x'| + C c_k \leq (1+ C c_k^{\sfrac{3}{4}}) |x-x'|\, .
\]

\noindent {\bf Case 3: $x\in G_{2 \sqrt{c_k}, c^2_{k-1}}$ for some $k$-dimensional face $G$ and $f_k (x') = f_{k+1} (x')$.} Without loss of generality we assume
\begin{itemize}
\item $G\in \mathcal{F}_k$;
\item $x'\not\in G_{2\sqrt{c_k}, c_{k-1}^2}$;
\item $x'\in H$ for some face $H$ (of dimension $i>k$).
\end{itemize}
We have two possibilities.

\noindent{\it Case 3a: $G\not\subset \overline{H}$.} Consider the closed
set $\tilde{G}:= G\setminus (S_{k-1})_{c_{k-1}^2}$. By Lemma \ref{l:separation}
$\dist (x', \tilde{G}) \geq \bar{c} c_{k-1}^2$ and thus $|x-x'|\geq \bar{c} c_{k-1}^2 - 2\sqrt{c_k}
\geq \frac{\bar{c}}{2}c_{k-1}^2$. We can therefore argue as in Case 2.

\noindent{\it Case 3b: $G\subset \overline{H}$.}
We then have two possibilities. The first is that $x\in {\rm Dom} (f_{k+1})$.
Since $f_k (x')= f_{k+1} (x')$, we have $|f_k (x') -x'|\leq C c_{k+1} =  C c_k^8$.
We use the coordinates of Definition \ref{d:coordinates} and (a$_{k+1}$) to conclude
$f_k (x') = f_{k+1} (y',z') = (y'', z'')$ with
$|z''|\geq |z'| - C\,c_k^8 \geq 2\sqrt{c_k} - C\,c_k^8 \geq \sqrt{2 c_k}$.
We can therefore write $f_k (x') = (y'', \Phi_\tau (z''))$
(because $\Phi_\tau (z'')=z''$)
and, hence, recalling $f_k (x) = (y, \Phi_\tau (h^F_{k+1} (y,z)))$ and $f_{k+1} (x) = (y, h^F_{k+1} (y,z))$,
\begin{align*}
&|f_k (x') - f_k (x)|^2 \leq |y - y''|^2 + (1+2\sqrt{\tau})^2 |h^F_{k+1} (y,z) - z''|^2\nonumber\\
\leq & (1+ 2 \sqrt{\tau})^2 |f_{k+1} (x)-f_{k+1} (x')|^2\, .
\end{align*}
We therefore conclude $|f_{k} (x')-f_{k} (x)|\leq (1+ C \tau^{\sfrac{1}{2}}) |x'-x| \leq (1+ C c_k^{\sfrac{1}{2}}) |x'-x|$.

The second possibility is that $x$ is not in the domain of definition of $f_{k+1}$. In that case $x$ is at distance $c_k$ from
$G$ and thus $|x-x'|\geq \sqrt{c_k}$. We then conclude that
$|f_{k} (x) - f_{k} (x')| \leq |x-x'| + C\,c_k \leq (1+ C\sqrt{c_k}) |x-x'|$.

\subsubsection{Summary} After $nQ$ steps, we get a function $f_0=\ro^\flat:\cQ\to\cQ$ which satisfies
\begin{align}
&\Lip(\ro^\flat)\leq 1+C\,\delta^{8^{-nQ-1}}
\quad\text{and}\quad
|\ro^\flat(x)-x|\leq C\,\delta^{8^{-nQ}},\label{e:bemolle_1}\\
&\ro^\flat (\{x:\dist (x,F)\leq \delta\})\subset \overline{F}\quad\mbox{for every $F\in \mathcal{F}_k$},\\
&\ro^\flat: F_{\delta, c_0^{\sfrac{1}{8}}}\to F \mbox{ is the orthogonal projection on $F$ for every $F\in \mathcal{F}_k$.}\label{e:bemolle_3}
\end{align}

\subsection{The extension $\ro^\sharp$ of $\ro^\flat$ to $\cQ_{\delta^{nQ+1}}$}\label{sss:ro*2}
Next we extend the map $\ro^\flat:\cQ\to\cQ$ to the $\delta^{nQ+1}$-neighborhood
of $\cQ$, keeping the estimate \eqref{e:bemolle_1}.
We first observe that, since the number of all the faces is finite,
when $\delta$ is small enough, there exists a constant $C=C(N)$ 
with the following property.
Consider two distinct faces $F$ and $H$ in $\cF_i$. If $x,y$ are two points contained, 
respectively, in $F_{\delta^{i+1}}\setminus \cup_{j<i}\cup_{G\in\cF_j}G_{\delta^{j+1}}$
and $H_{\delta^{i+1}}\setminus \cup_{j<i}\cup_{G\in\cF_j}G_{\delta^{j+1}}$, then
\begin{equation}\label{e:conflit}
\dist (x,y) \;\geq\; C\,\delta^{i}.
\end{equation}
Similarly if $F \in \cF_l$ and $H \in \cF_i$ with $l<i$ and $F \not\subset \bar H$, then for every
$x \in F_{\delta^{l+1}}$
and $y\in H_{\delta^{i+1}}\setminus \cup_{j<i}\cup_{G\in\cF_j}G_{\delta^{j+1}}$ it holds
\begin{equation}\label{e:conflit2}
\dist (x,y) \;\geq\; C\,\delta^{i}.
\end{equation}
The extension $\ro^\sharp$ is defined 
inductively, but this time ``from the bottom to the top''.
The first extension $g_0$ is identically $0$ on $B_\delta (0)$ (note that this is feasible because 
$\ro^\flat \equiv 0$ in
$B_\delta (0)\cap \cQ$).
Now we come to the inductive step. Suppose we have an extension 
$g_\ell$ of $\ro^\flat$,
defined on the union of the $\delta^{\ell+1}$-neighborhoods of the 
$\ell$-skeletons $S_\ell$, for $\ell\in \{0, \ldots, k\}$, i.e.
\[
\mathbf{L}_k \;:=\;
\cQ\cup B_{\delta}(0)\cup\bigcup_{\ell=1}^k\bigcup_{F\in\cF_\ell} F_{\delta^{\ell+1}}\,.
\]
Assume inductively that $\Lip(g_k)\leq 1+C\,\delta^{8^{-nQ-1}}$ and assume that $g_k$ maps any $\delta^{j+1}$-neighborhood of any
$j$-dimensional face into its closure, when $j\leq k$.
Then, we define the extension of $g_k$ to
$\mathbf{L}_{k+1}$ in the following way. For every face
$F\in\cF_{k+1}$, we set
\begin{equation}\label{e:casi}
g_{k+1}:=
\begin{cases}
\ro^\flat & \textrm{on} \quad  \cQ\, ,\\
g_k &\textrm{on} \quad (S_k)_{\delta^{k+1}}\cap F_{\delta^{k+2}},\\
\p_F & \textrm{on} \quad \{x\in\R^{N}\,:\, \p_F (x)\in F_{\delta, 1}\}\cap F_{\delta^{k+2}}\,,
% ,\\
% g_k &\textrm{on}\quad F\setminus F_{\delta,1}\, .
\end{cases}
\end{equation}
where $\p_F$ stands for the orthogonal projection on $F$ (recall that by \eqref{e:bemolle_3} $\ro^\flat = \p_F$ on $F\cap F_{\delta^{k+2}, 1}$).
Consider now a face $F$ as above and $U (F)$ the union of all the $\delta^{j+1}$-neighborhoods of the $j$-dimensional
faces which belong to $\overline{F}$.
As defined above, $g_{k+1}$
maps a portion of $U (F)$ into $\overline{F}$. 
We can use Lemma \ref{l:kirsz} to extend $g_{k+1}$
to $U(F)$ keeping
the same Lipschitz constant, which we now compute. This constant is obviously smaller than $1+ C \delta^{8^{-nQ-1}}$ on the domain
$((S_k)_{\delta^{k+1}}\cap F_{\delta^{k+2}})\cup F$ by inductive hypothesis. The same constant is $1$ on $\{x\in\R^{N}\,:\, \p_F (x)\in F_{\delta, 1}\}\cap F_{\delta^{k+2}}$.
Consider now a point $x\in \{x\in\R^{N}\,:\, \p_F (x)\in F_{\delta, 1}\}\cap F_{\delta^{k+2}}$ and a point
$y\in F \cup ((S_k)_{\delta^{k+1}}\cap F_{\delta^{k+2}})$. If $y\not\in (S_{k})_{c_0^{\sfrac{1}{8}}}$, then necessarily $y\in F$ and we then
have
\[
|g_{k+1} (x) - g_{k+1} (y)| = |\p_F (x) - y| = |\p_F (x) - \p_F (y)| \leq |x-y|.
\]
Otherwise we have
$|x-y|\geq 1- c_0^{\sfrac{1}{8}} = 1- \delta^{8^{-nQ-1}}$ and we can write
\begin{align*}
|g_{k+1} (x) - g_{k+1} (y)|
% \leq |x-y| + C c_0 
\leq & |g_{k+1} (x) - y| + C c_0 \leq |x-y| + \delta^{k+2} + C c_0\\
\leq & \left(1+ \frac{\delta^{k+2} + C c_0}{1- C c_0^{\sfrac{1}{8}}}\right) |x-y|\leq (1+ C \delta^{8^{-nQ-1}}) |x-y|.
\end{align*}

Note that, if $x\in U(F_1)\cap U(F_2)$ for two distinct $F_1, F_2 \in \cF_{k+1}$, 
then $x\in \mathbf{L}_k$.
Thus, the map $g_{k+1}$ is continuous. We next bound
the global Lipschitz constant of $g_{k+1}$.
Indeed consider points
$x\in U(F_1) \setminus U(F_2)$ and $y\in U(F_2) \setminus U(F_1)$
for two distinct $F_i\in \mathcal{F}_{k+1}$.
Since by \eqref{e:conflit} and \eqref{e:conflit2} $|x-y|\geq C\,\delta^{k+1}$,
we easily see that
\begin{align*}
|g_{k+1}(x)-g_{k+1}(y)|&\leq |g_{k+1}(x) - g_{k+1}(\p_{F_1}(x))|+ |g_{k+1}(\p_{F_1}(x))-g_{k+1}(\p_{F_2}(y))|\\
&\quad + |g_{k+1}(\p_{F_2}(y))- g_{k+1}(y)|\\
& \leq 2(1+C \delta^{8^{-nQ-1}})\,\delta^{k+2}+|\ro^\flat (\p_{F_1}(x))-\ro^\flat (\p_{F_2}(y))|\notag\\
&\leq 2(1+C \delta^{8^{-nQ-1}})\,\delta^{k+2}+(1+C\,\delta^{8^{-nQ-1}})|\p_{F_1}(x)-\p_{F_2}(y)|\notag\\
&\leq 2(1+C \delta^{8^{-nQ-1}})\,\delta^{k+2}+(1+C\,\delta^{8^{-nQ-1}})
\Big(|x-y|+2\,\delta^{k+2}\Big)\\
&\leq (1+C\,\delta^{8^{-nQ-1}})\,|x-y|.
\end{align*}
Next, consider the case $x\in \cQ\setminus U(F), y\in U (F)$. If $|x-y|\geq \delta^{k+1}$, we can then argue as above and 
(considering that $g_{k+1} (x) = \ro^\flat (x)$) we bound
\begin{align*}
& |g_{k+1}(x)-g_{k+1}(y)|\leq (1+C \delta^{8^{-nQ-1}})\,\delta^{k+2}+|\ro^\flat (x)-\ro^\flat (\p_{F}(y))|\notag\\
&\leq (1+C \delta^{8^{-nQ-1}})\,\left(\delta^{k+2}+|x-\p_{F}(y)|\right)
\leq (1+C \delta^{8^{-nQ-1}})\,\left(\delta^{k+2}+ |x-y|+\delta^{k+2}\right)\notag\\
&\leq (1+C\,\delta^{8^{-nQ-1}})\,|x-y|.
\end{align*}
We therefore assume $|x-y|\leq \delta^{k+1}$. Observe also that, if $y\not \in \{x\in\R^{N}\,:\, \p_F (x)\in F_{\delta, 1}\}\cap F_{\delta^{k+2}}$, then $g_{k+1} (y) = g_k (y)$ and since
$g_{k+1} (x) = \ro^\flat (x) = g_k (x)$, we know the Lipschitz bound by inductive assumption. We therefore conclude that $x\in F_{\delta^{k+2}+ \delta^{k+1}, 1 - \delta^{k+1}}$. Assuming $\delta_0$ small enough, $\delta^{k+2} + \delta^{k+1} \leq \delta$ and 
$1 - \delta^{k+1} \geq \delta^{8^{-nQ -1}} = c_0^{\sfrac{1}{8}}$, therefore $x\in F_{\delta, c_0^{\sfrac{1}{8}}}$. By \eqref{e:bemolle_3} we then have $|g_{k+1} (x) - g_{k+1} (y)| = |\p_F (x) - \p_F (y)| \leq |x-y|$.

Since $\cQ$ and the union of the $U (F_i)$ is the domain of definition of $g_{k+1}$, this shows 
$\Lip(g_{k+1})\leq 1+C\,\delta^{8^{-nQ-1}}$.
Note that by construction we also have that $U(F)$ is mapped into $\overline{F}$, which is the other inductive hypothesis. 

After making the step above $nQ$ times we arrive to a map
$g_{nQ}$ which extends $\ro^\flat$ and is defined in a 
$\delta^{nQ+1}$-neighborhood of $\cQ$. This is the map
$\ro^\sharp$.

\section{Persistence of $Q$-points: Proof of Theorem \ref{t:persistence}}\label{s:corol}

\begin{proof}[Proof of Theorem \ref{t:persistence}]
As usual, by scaling and translating we assume $x=0$ and $r=1$. According to \cite[Theorem 3.9]{DS1}, there are constants $\bar{C} (m,n,Q),\kappa (m,n,Q)>0$ such that
\begin{equation}\label{e:Hold_est}
\sup_{x \neq y\in B_{1/2}} \frac{\cG (w(x), w(y))}{|y-x|^\kappa} \leq \bar{C} (\D (w))^{\frac{1}{2}}\quad
\mbox{for any $\D$-minimizer $w: B_1 \to \Iqs$.} 
\end{equation}
The final choice of $\bar{s}$ will be specified at the very end, but for the moment we impose $\bar{s}<\textstyle{\frac{1}{4}}$. 
% \begin{equation}\label{e:imponi}
% 4 \bar{C}^2 (2\bar{s})^{2\kappa} \leq \textstyle{\frac{\hat{\delta}}{4}}\qquad \mbox{and}
% \qquad \bar{s}<\textstyle{\frac{1}{4}}\, .
% \end{equation} 
Fix now $s<\bar{s}$ and $C^\star$ as in the statement 
and assume by contradiction that, no matter how small we choose $\hat{\eps}>0$, there are a current $T$ 
and a submanifold $\Sigma$ as in Theorem \ref{t:main} and a point $(p,q)\in \bC_{1/2}$ satisfying:
\begin{itemize}
\item[(a)] $E := \bE (T, \bC_4)< \hat{\eps}$ and $\bA^2 \leq C^\star E$; 
\item[(b)] $\Theta (T, (p,q)) = Q$;
\item[(c)] the $E^{\gamma_1}$-approximation $f$ (which is the map of Theorem \ref{t:main}) violates \eqref{e:persistence}, that is
\begin{equation}\label{e:no_persistence}
\int_{B_s (p)} \cG (f, Q \a{\etaa\circ f})^2 > \hat{\delta} s^m E\, .
\end{equation}
\end{itemize}
Set $\bar{\delta} = \frac{1}{4}$ and fix $\bar{\eta}>0$ (whose choice will be specified later). By (a), for
a suitably small $\hat{\eps}$ we can apply Theorem \ref{t:harmonic_final} in the
coordinates of Remark \ref{r:Psi}:  we let $u$ be the corresponding $\D$-minimizer and $w = (u, \Psi (x, u))$. If $\bar{\eta}$ and $\hat{\eps}$ are suitably small, we have
\[
\int_{B_s (p)} \cG (w, Q\a{\etaa\circ w})^2 \geq \textstyle{\frac{3\hat\delta}{4}} s^m E\, ,
\]
and $\sup\big\{\D (f), \D (w)\} \leq  C E$ (here we use Remark~\ref{r:prime stime}). Thus there is 
$\bar{p}\in B_s (p)$ with $\cG (w (\bar p), Q \a{\etaa \circ w (\bar p)})^2 \geq \frac{3\hat\delta}{4\omega_m}\,E$ and, by \eqref{e:Hold_est}, we conclude
\begin{equation}\label{e:imponi}
g (x) := \cG (w (x), Q\a{\etaa\circ w (x)}) \geq \left(\textstyle{\frac{3\hat\delta}{4\omega_m}} E\right)^{\sfrac{1}{2}} - 
2\,(C E)^{\sfrac{1}{2}} \bar{C} \bar{s}^{\kappa} \geq \left(\textstyle{\frac{\hat\delta}{2}} E\right)^{\sfrac{1}{2}}\,,
\end{equation}
where we assume that $\bar s$ is chosen small enough in order to satisfy the last inequality.
Setting $h (x):= \cG (f (x), Q\a{\etaa\circ f (x)})$, we recall that we have
\[
\int_{B_s (p)} |h-g|^2 \leq C\, \bar{\eta} E\, .
\]
Consider therefore the set $A:= \big\{h > \big(\frac{\hat \delta}{4} E\big)^{\sfrac{1}{2}}\big \}$. If $\bar \eta$ is sufficiently small, we can assume that 
$|B_s(p)\setminus A| < \frac{1}{8} |B_s|$. Further, define $\bar{A}:= A\cap K$, where $K$ is the set of Theorem \ref{t:main}. Assuming $\hat{\eps}$ is sufficiently small we ensure $|B_s(p)\setminus \bar A| < \frac{1}{4} |B_s|$. Let $N$ be the smallest integer such that $N \frac{\hat \delta E}{64 Q s} \geq \frac{s}{2}$. Set $\sigma_i := s- i \frac{\hat \delta E}{64Q s}$ for $i\in \{0, 1\ldots, N\}$ and consider, for $i\leq N-1$,  the annuli $\cC_i:= B_{\sigma_i} (p) \setminus B_{\sigma_{i+1}} (p)$. If $\hat{\eps}$ is sufficiently small, we can assume that $N\geq 2$ and
$\sigma_N \geq \frac{s}{4}$.
For at least one of these annuli we must have $|\bar A\cap \cC_i|\geq \frac{1}{2} |\cC_i|$. We then let $\sigma:= \sigma_i$ be the corresponding outer radius and we denote by $\cC$ the corresponding annulus.

Consider now a point $x\in \cC \cap \bar{A}$ and let $T_x$ be the slice $\langle T, \p, x \rangle$. Since $\bar{A}\subset K$, for a.e.~$x\in \bar{A}$ we have $T_x = \sum_{i=1}^Q \a{(x,f_i (x))}$. Moreover, there exist $i$ and $j$ 
such that $|f_i (x)-f_j (x)|^2\geq \frac{1}{Q} \cG (f (x), \a{\etaa \circ f (x)})^2 \geq \frac{\hat \delta}{4Q} E$ (recall
that $x\in \bar{A}\subset A$). When $x\in \cC$ and the points $(x,y)$ and $(x,z)$ belong both to $\bB_\sigma ((p,q))$, we must have 
\[
|y-z|^2 \leq 4 \left(\sigma^2 - \Big(\sigma - \textstyle{\frac{\hat{\delta} E}{64 Q s}}\right)^2\Big) \leq 
\textstyle{\frac{\sigma \hat\delta E}{8Q s}} \leq \textstyle{\frac{\hat\delta E}{8Q}}\, .
\]
Thus, for $x\in \bar{A}\cap \cC$ at least one of the points $(x, f_i (x))$ is not contained in $\B_\sigma ((p,q))$. We conclude therefore 
\begin{align}
\|T\| (\bC_{\sigma} (p) \setminus \bB_\sigma ((p,q))) &\geq |\cC\cap \bar{A}| \geq \frac{1}{2} |\cC| = 
\frac{\omega_m}{2} \left( \sigma^m - \left(\sigma -  \textstyle{\frac{\hat{\delta} E}{64 Qs}}\right)^m \right)\nonumber\\
&\geq \frac{\omega_m}{2} \sigma^m \left( 1- \left(1-  \textstyle{\frac{\hat{\delta} E}{64 Qs \sigma}}\right)^m \right)\, .
\end{align}
Recall that, for $\tau$ sufficiently small, $(1-\tau)^m \leq 1 - \frac{m\tau}{2}$.
Since $\sigma \geq \frac{s}{4}$, if $\hat{\eps}$ is chosen sufficiently small we can therefore conclude
\begin{equation}\label{e:punti_persi}
\|T\| (\bC_{\sigma} (p) \setminus \bB_\sigma (p)) \geq \frac{\omega_m \sigma^m \hat{\delta} E}{256 Q s \sigma}
\geq \frac{\omega_m}{1024 Q} \hat{\delta} E  \sigma^{m-2}= c_0 \hat{\delta} E \sigma^{m-2}\, .
\end{equation}
Next, by Theorem \ref{t:main} and Theorem \ref{t:harmonic_final},
\begin{equation}\label{e:stima_nel_cilindro}
\|T\| (\bC_{\sigma} (p)) \leq Q \omega_m \sigma^m + C E^{1+\gamma_1} +\bar{\eta} E + \int_{B_\sigma (p)} \frac{|Dw|^2}{2}\, .
\end{equation}
Moreover, as shown in \cite[Section 3.3]{DS1} (cf.~\cite[Proposition 3.10]{DS1}), we have
\begin{equation}\label{e:decay}
\int_{B_\sigma (p)} |Dw|^2 \leq \|D\Psi\|^2 \sigma^m + C \int_{B_\sigma (p)}  |Du|^2 \leq C (1 + C^\star) E \sigma^m + C \D (u) \sigma^{m-2+2\kappa},
\end{equation}
(for some constants $\kappa$ and $C$ depending only on $m$, $n$ and $Q$; in fact the exponent
$\kappa$ is the one of \eqref{e:Hold_est}). Combining \eqref{e:punti_persi},
\eqref{e:stima_nel_cilindro} and \eqref{e:decay}, we conclude
\begin{equation}\label{e:da_sopra}
\|T\| (\bB_\sigma ((p,q))) \leq Q \omega_m \sigma^m + (\bar{\eta} + C(1+ C^\star) \sigma^m) E + C E^{1+\gamma_1} + C E \sigma^{m-2+2\kappa} - c_0 \sigma^{m-2} \hat\delta E\, .
\end{equation}
Next, by the monotonicity formula, $\rho\mapsto \exp (C \bA^2 \rho^2) \rho^{-m} \|T\| (\bB_\rho ((p,q)))$ is a monotone function 
(indeed, the usual monotonicity formula of the theory of varifolds
with bounded mean curvature gives the monotonicity of 
$\rho \mapsto \exp (C\bA \rho) \rho^{-m} \|T\| (\bB_\rho ((p,q)))$, cf.~\cite[Theorem 17.6]{Sim}); the slight improvement needed in this proof follows from minor modifications of the usual argument but, since we have not been able to find a reference, we provide a proof in Lemma~ \ref{l:monot} in the appendix). Using $\bA^2 \leq C^\star E$, $\Theta(T,(p,q)) =Q$ and the Taylor expansion of the exponential, we conclude
\begin{equation}\label{e:da_sotto}
\|T\| (\bB_\sigma ((p,q))) \geq Q \omega_m \sigma^m - C C^\star E \sigma^{m+2}\, .
\end{equation}
Combining \eqref{e:da_sopra} and \eqref{e:da_sotto} we conclude 
\begin{equation}\label{e:basso_alto_10}
C (1+ C^\star) \sigma^2 + (\bar{\eta} + C E^\gamma_1) \sigma^{2-m} + C \sigma^{2\kappa} \geq c_0 \hat{\delta}\, .
\end{equation}
Recalling that $\sigma \leq s < \bar{s}$, we can, finally, specify $\bar{s}$: it is chosen so that
$C (1+ C^\star) \bar{s}^2 + C \bar{s}^{2\kappa}$ is smaller than $\frac{c_0}{2} \hat{\delta}$. Combined with
\eqref{e:imponi} this choice of $\bar{s}$ depends, therefore, only upon $\hat{\delta}$. \eqref{e:basso_alto_10}
becomes then
\begin{equation}\label{e:basso_alto_11}
(\bar{\eta} + C E^{\gamma_1}) \sigma^{2-m}  \geq \textstyle{\frac{c_0}{2}} \hat{\delta}\, .
\end{equation}
Next, recall that $\sigma\geq \frac{s}{4}$. We then choose $\hat{\eps}$ so that
$(\bar{\eta} + C \hat{\eps}^{\gamma_1}) (\frac{s}{4})^{2-m} \leq \frac{c_0}{4} \hat{\delta}$.
This choice is incompatible with \eqref{e:basso_alto_11}, thereby reaching a contradiction:
for this choice of the parameter $\hat{\eps}$ (which in fact depends only upon $\hat{\delta}$
and $s$) the conclusion of the Theorem, i.e. \eqref{e:persistence}, must then be valid.
\end{proof}

\appendix

\section{Monotonicity formula}

\begin{lemma}\label{l:monot}
There is a constant $C$ depending only on $m$, $n$ and $\bar{n}$ with the following property. If $\Sigma\subset \R^{m+n}$ is a $C^2$ $(m+\bar{n})$-dimensional submanifold with $\|A_\Sigma\|_\infty \leq \bA$ and $T$ an $m$-dimensional integer-rectifiable current supported in $\Sigma$ which is stationary in $\Sigma$, then for every $\xi\in \Sigma$ the function $\rho\mapsto \exp (C \bA^2 \rho^2) \rho^{-m} \|T\| (\bB_\rho (\xi))$ is monotone on the interval $]0, \bar{\rho}[$, where $\bar{\rho} := \min \{\dist (x, \supp (\partial T)), (C \bA)^{-1}\}$.
\end{lemma}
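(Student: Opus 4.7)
The plan is to rederive the monotonicity formula from the first variation of $T$, but track the error term from $\vec H_T$ more carefully by exploiting that $T$ is stationary in $\Sigma$ (not just a varifold with bounded mean curvature in $\R^{m+n}$). Testing stationarity against the standard radial field $X_\rho(y) = (y-\xi)\phi(|y-\xi|/\rho)$ with a smooth cutoff $\phi$, the usual computation (cf.~\cite[\S 17]{Sim}) gives, for $0<\sigma<\rho<\bar\rho$,
\[
\frac{\|T\|(\bB_\rho(\xi))}{\rho^m} - \frac{\|T\|(\bB_\sigma(\xi))}{\sigma^m} \;\geq\; -\int_\sigma^\rho t^{-m-1}\!\!\int_{\bB_t(\xi)} (y-\xi)\cdot \vec H_T(y)\,d\|T\|(y)\,dt,
\]
where $\vec H_T$ is the generalized mean curvature of $T$ in $\R^{m+n}$ (which is bounded because $\supp(T)\subset\Sigma$). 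The non-negative tilt term $\int |(y-\xi)^\perp|^2 |y-\xi|^{-m-2}\,d\|T\|$ has been dropped since only the one-sided inequality is needed.

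The first key fact is that $\vec H_T(y)\in (T_y\Sigma)^\perp$ at $\|T\|$-a.e.~$y$, with $|\vec H_T(y)|\leq m\bA$. Decomposing any test field $X = X^T + X^\perp$ into components tangent and normal to $\Sigma$, stationarity of $T$ in $\Sigma$ kills the tangential piece, while the normal piece gives, for any orthonormal frame $e_1,\dots,e_m$ of $T_y T\subset T_y\Sigma$, the identity $\langle e_i, D_{e_i}X^\perp\rangle = -\langle A_\Sigma(e_i,e_i), X^\perp\rangle$; thus $\vec H_T(y) = \sum_i A_\Sigma(e_i,e_i)$ is normal to $\Sigma$ and of norm $\leq m\bA$. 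The second, decisive, observation is geometric: for $y,\xi\in\Sigma$ with $|y-\xi|\leq (C\bA)^{-1}$, the component of $y-\xi$ normal to $T_y\Sigma$ has size $O(\bA|y-\xi|^2)$. Indeed, parametrizing $\Sigma$ near $y$ as the graph of a $C^2$ map $\tilde\Psi$ in coordinates adapted to $T_y\Sigma$ and satisfying $\tilde\Psi(0)=0$, $D\tilde\Psi(0)=0$, $\|D^2\tilde\Psi\|_0\leq C\bA$, if $\xi-y=(\bar\zeta,\tilde\Psi(\bar\zeta))$ in these coordinates then the normal component equals $\tilde\Psi(\bar\zeta)=O(\bA|\bar\zeta|^2)=O(\bA|y-\xi|^2)$ by Taylor.

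Combining these, since $\vec H_T(y)\perp T_y\Sigma$,
\[
|(y-\xi)\cdot \vec H_T(y)| \;=\; |(y-\xi)^{\perp_{T_y\Sigma}}\cdot \vec H_T(y)| \;\leq\; C\bA^2|y-\xi|^2 \qquad \text{for $\|T\|$-a.e.\ $y\in\bB_{\bar\rho}(\xi)$},
\]
so the error in the first display is bounded by $C\bA^2\int_\sigma^\rho t \cdot t^{-m}\|T\|(\bB_t(\xi))\,dt$, using $|y-\xi|^2\leq t^2$ on $\bB_t(\xi)$. Setting $g(t):=t^{-m}\|T\|(\bB_t(\xi))$, this reads $g(\rho)-g(\sigma)\geq -C\bA^2\int_\sigma^\rho t\,g(t)\,dt$, i.e.\ $g'(\rho)\geq -C\bA^2\rho\,g(\rho)$ in the distributional sense, and Gronwall yields that $\exp(C\bA^2\rho^2/2)\,g(\rho)$ is nondecreasing on $(0,\bar\rho)$, giving the claim up to relabeling $C$. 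The main technical point is justifying the first-variation identity for integer rectifiable $T$, which is standard (cf.~\cite[\S 17]{Sim}); the whole improvement from the naive $\bA\rho$-bound to $\bA^2\rho^2$ in the exponent rests entirely on the geometric observation that, within $\Sigma$, the normal component of $y-\xi$ with respect to $T_y\Sigma$ vanishes to second order.
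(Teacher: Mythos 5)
Your proof is correct and reaches the conclusion by a genuinely different, though closely related, route. The paper tests the stationarity of $T$ in $\Sigma$ directly against the \emph{tangentially projected} radial field $X_s(x)=\varphi(|x|/s)\,\p_x(x)$, so that $\delta T(X_s)=0$ exactly; the price is an extra divergence term $II$ involving $\langle x,\nu_j(x)\rangle$ for a normal frame $\nu_j$ of $\Sigma$, which is shown to be $O(\bA r^2)$ by integrating $\frac{d}{d\sigma}\langle\dot\gamma,\nu_j\rangle$ along a geodesic of $\Sigma$ joining the center to $x$. You instead keep the full ambient radial field, identify the generalized mean curvature $\vec H_T=\sum_i A_\Sigma(e_i,e_i)$ (normal to $\Sigma$, $|\vec H_T|\le m\bA$) from splitting test fields into parts tangent and normal to $\Sigma$, and then feed the Allard/Simon monotonicity-with-mean-curvature identity; the quadratic gain comes from pairing $\vec H_T\in(T_y\Sigma)^\perp$ with the $(T_y\Sigma)^\perp$-component of $y-\xi$, which you bound by $C\bA|y-\xi|^2$ via a graphical Taylor expansion of $\Sigma$ over $T_y\Sigma$. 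This is precisely the shortcut the paper itself points to in the remark following the lemma (``the argument above can be considerably shortened using directly the Monotonicity Formula of Section~5 in [All]''), and both proofs ultimately rest on the same second-order vanishing of the normal component of the radial vector; your derivation of it (local graph over $T_y\Sigma$) and the paper's (geodesic integration) are interchangeable. One caveat you should be aware of, which your write-up shares with the paper's own proof: the bound on the normal component requires that $\xi$ lies on the graphical patch of $\Sigma$ over $T_y\Sigma$ (equivalently, that $y$ and $\xi$ are joined by a short path \emph{inside} $\Sigma$). This does not follow from $\|A_\Sigma\|\le\bA$ and $|y-\xi|\le (C\bA)^{-1}$ alone (two nearly parallel sheets of a low-curvature $\Sigma$ can pass extrinsically close while being intrinsically far apart), but it is automatic under the paper's standing Assumption~\ref{ipotesi_base}, where $\Sigma$ is an entire graph with small slope, so extrinsic and intrinsic distances are comparable; in that setting your argument, like the paper's, is complete.
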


\begin{proof}
The argument is a minor variant of the classical proof of the monotonicity formula for varifolds with bounded mean curvature due to Allard (cf.~\cite{All}).
Here the stronger hypothesis that $T$ is stationary in a $C^2$-submanifold allows a better estimate of the relevant
error term.  Without loss of generality assume $\xi=0$, let $s\in ]0, \bar{\rho}[$ and $\varphi\in C^1_c (]-1,1[)$ with $\varphi \equiv 1$ in a neighborhood of $0$. For each $x\in \Sigma$ let $\p_x: \R^{m+n}\to T_x \Sigma$ be the orthogonal projection onto the tangent space to $\Sigma$ in $x$ and consider the vector field
$X_s (x) := \varphi (\frac{|x|}{s}) \p_x (x)$. Note that $X_s$ is tangent to $\Sigma$ and thus $\delta T (X_s) =0$. In order to compute $\delta T (X_s)$, consider at $\|T\|$-a.e. $x\in \supp (T)$ an orthonormal frame $e_1, \ldots, e_m$ with $e_1\wedge \ldots \wedge e_m = \vec{T}$. It turns out that 
\[
\delta T (X_s) = \int {\rm div}_{\vec{T}} X_s\, d\|T\| = \int \sum_i \langle D_{e_i} X_s, e_i\rangle\, d\|T\|\, .
\]
Next, at any $x\in \Sigma$ let $\nu_1, \ldots, \nu_l$ ($l=n-\bar{n}$) be an orthonormal frame orthogonal to $\Sigma$. Since $\p_x (x) = x - \sum_j \langle x, \nu_j\rangle \nu_j$ and $\langle e_i, \nu_j \rangle = 0$,
we compute:
\begin{align}
{\rm div}_{\vec{T}} X_s (x) & = \underbrace{\sum_i \left[D_{e_i} \left( \varphi \left(\textstyle{\frac{|x|}{s}}\right)\right) \langle x, e_i \rangle
+  \varphi \left(\textstyle{\frac{|x|}{s}}\right) \langle D_{e_i} x, e_i \rangle\right]}_{I} - 
\underbrace{\varphi \left(\textstyle{\frac{|x|}{s}}\right) \sum_{i,j} \langle x, \nu_j \rangle \langle D_{e_i} \nu_j, e_i\rangle}_{II}\, .\nonumber
\end{align}
$I$ is the usual expression appearing in the proof of the standard monotonicity formula for
stationary varifolds. If we use the notation $r$ for the function $x\mapsto |x|$ and $\nabla^\perp r$
for the orthogonal projection on the orthogonal complement of ${\rm{Span}}\{e_1, \ldots, e_m\}$,
we find $I=m \varphi (\frac{r}{s}) + \frac{r}{s} \varphi' (\frac{r}{s}) (1- |\nabla^\perp r|^2)$ (see for instance \cite[(2.2)]{DL-All}).  In order to bound $II$, we first observe that $\langle D_{e_i} \nu_j, e_i \rangle = - \langle A(e_i,e_i), \nu_j\rangle$. Next, since $r\leq (C \bA)^{-1}$, if $C$ is chosen sufficiently large we can assume that the geodesic segment of $\Sigma$ connecting $0$ and $x$ has length $\ell < 2r$. Denote by $\gamma : [0,  \ell] \to \Sigma$ a parametrization by arc-length of such a segment. Then,
\begin{equation}\label{e:stimella}
\langle x, \nu_j (x)\rangle = \int_0^\ell \langle\dot{\gamma} (\sigma), \nu_j (\gamma (\ell))\rangle\, d\sigma = \int_0^\ell \underbrace{\langle \dot{\gamma} (\sigma) , \left[\nu_j (\gamma (\ell)) - \nu_j (\gamma (\sigma))\right]\rangle}_{g (\sigma)}\, d\sigma\, ,
\end{equation}
and observe that
\[
|g' (\sigma)|\leq 2 \left|\frac{d}{d\sigma} \dot{\gamma} (\sigma)\right|  + \left|\langle \dot{\gamma} (\sigma), 
\frac{d}{d\sigma} \nu_j (\gamma (\sigma)) \rangle \right| \leq 3 |A (\dot{\gamma} (\sigma), \dot{\gamma} (\sigma))|.
\]
Since $g (\ell )=0$, integrating the latter inequality we conclude $|g(\sigma)|\leq 3 \ell \bA \leq 6 r \bA$, 
which in turn, together with \eqref{e:stimella}, gives  $|x\cdot \nu_j (x)|\leq 12 r^2 \bA$. 

Putting all estimates together, we achieve the inequality $|II| \leq C \varphi (\frac{r}{s}) r^2 \bA^2$. From here
on we can follow the usual strategy leading to the monotonicity formula (cf.~\cite{Sim} or 
\cite[Proof of Theorem 2.1]{DL-All}): letting the test function $\varphi$ converge from below to the indicator 
function of $]-1,1[$, after few manipulations we achieve the inequality
\[
\frac{d}{ds} \frac{\|T\| (\bB_s)}{s^m} \geq - C \bA^2 s \frac{\|T\| (\bB_s)}{s^m}\, ,
\]
which leads to the desired claim.
\end{proof}

\begin{remark}
The proof can be easily extended to varifolds which are stationary in $\Sigma$. In fact the argument
above can be considerably shortened using directly the Monotonicity Formula of Section 5 in \cite{All}.
\end{remark}

\bibliographystyle{plain}
\bibliography{Lp-references}

\end{document}